%% file: main.tex
\documentclass[11pt,reqno]{amsart}
\input{macros}
\hypersetup{
  pdftitle={A strongly aperiodic monotile in three dimensions},
  pdfauthor={Ioannis Tsiokos}
}

\begin{document}

\title[A strongly aperiodic monotile in three dimensions]{A strongly aperiodic monotile in three dimensions}

\author{Ioannis Tsiokos}
\address{}

\subjclass[2020]{Primary 05B45; Secondary 52C22, 52C23}
\keywords{Aperiodic monotile, einstein problem, strongly aperiodic tilings, tilings of three-dimensional space, polyhedral tiles, substitution tilings, hierarchical tilings, limit-periodic model sets, computer-assisted proof, Lean 4 formalization, Six Birds Theory, emergence calculus}
\date{16 September 2026 (revision 2; mathematical baseline commit \texttt{d90313a717})}

% ledger: Q1, Q6, Q7, T1, T2, T3, T4, T6, T8, O1, O4, H5, H6, M1, M2, M3, D4, Q23 (abstract; OUTLINE.md); Q-G author's account (the provenance sentence)
\begin{abstract}
Socolar and Taylor asked for a single, simply connected three-dimensional prototile that forces
nonperiodicity by shape alone, admitting no weakly nonperiodic tiling; the Schmitt--Conway--Danzer
biprism and the three-dimensional Socolar--Taylor tile admit screw motions or a periodic stacking
direction. We exhibit a rational polyhedral $3$-ball $Q$, which we call Chair44 (R44): a
seven-cube chair whose $24$ exposed unit
panels carry tiny square-pyramid features, and prove, as a proof submission, that $Q$ admits tilings
of $\R^3$ by congruent copies, reflections allowed, and that every such tiling has no translational
period and a symmetry group of order at most $24$; every tiling is homochiral and carries a unique
infinite hierarchy of nested supertiles. The solid was designed to a reading of the
aperiodic-monotile phenomenon reached with the Six Birds emergence calculus (Section~3.3), and
the construction turns on a single finite test, checked by machine: the tile's own contact rule
survives coarsening, so that the decoded parent tiling obeys the tile's rule and no other. The
proof combines a written geometric argument, that the features force
every tiling onto a registered lattice, with exhaustive finite enumerations; the companion census is
replayed by two independent implementations, every finite gate is kernel-checked in Lean~4 (modulo a
named compiler hook per \texttt{native\_decide} theorem), and the written geometric lemmas and the
logical assembly are Lean theorems as well, so that the theorem is kernel-checked modulo the named
compiler hooks; the written proofs remain as exposition.
\end{abstract}

\maketitle

\frontmatternote
\begin{center}
  {\small\copyright\ 2026 Ioannis Tsiokos. Licensed under
  \href{https://creativecommons.org/licenses/by/4.0/}{CC BY 4.0}.}
\end{center}
% The repository PDF displays its Zenodo DOI.  The arXiv source builder
% deliberately omits this local include, so the arXiv version has no DOI line.
\IfFileExists{publication_doi.tex}{\input{publication_doi}}{}

\input{sec1_intro}
\input{sec2_solid}
\input{sec3_finding}
\input{sec4_companions}
\input{sec5_registration}
\input{sec6_hierarchy}
\input{sec7_aperiodicity}
\input{sec8_mechanization}
\clearpage
\input{sec9_context}
\input{sec9_remarks}

\section*{Acknowledgements}
% ledger: D6, H13, M8; reviewers and tools by role; the framework is cited in sec:found only
The repository archives two external adversarial review rounds of the development and one of
this manuscript; the latter corrected the count of fixed seeds and supplied the first
non-exhausting and the first symmetric fixed points (\cref{sec:hierarchy,sec:limitperiodic}),
and its report is filed with the paper's review record. Automated reasoning agents also
checked the development, certificates and manuscript under the recorded review protocol. The
remaining errors are ours. We thank the authors of the hat and Spectre papers for the form of
the definitions, of the theorems and of the disclosure that this paper follows, and the authors
of Lean~4 and Mathlib.

\section*{Use of AI systems}
% ledger: H13, H14, M8, D4; source LINEAGE.md (packet table), provenance/HASH_CHAIN.md, lean/R44/MECHANIZATION_PLAN.md (agent roles), paper/review/PROTOCOL.md; owner statement 2026-09-09
This paper and the work it reports were produced with artificial-intelligence systems under the
author's direction; the author takes responsibility for every statement, and no AI system is an
author. \emph{The discovery.} The solid was found by an OpenAI reasoning model (ChatGPT, Astra)
that was given the Six Birds framework, the author's emergence calculus
\cite{Tsi26F6,Tsi26F4,Tsi26NDO}, together with the construction record of the repository, and
that worked from the reading of the aperiodic-monotile phenomenon described in
\cref{sec:reading,sec:found}: the framework was the input, the solid the output. Its six
hash-chained packets (\cref{sec:found}, \texttt{provenance/}) contain the periodic control, the
frame change, the $44$-contact atlas and the written alignment proof; the construction record that
preceded them was likewise produced by automated agents under the author's direction. \emph{The
verification.} The Lean formalization, the finite certificates and the negative controls were
implemented by automated coding agents (OpenAI Codex) and reviewed by agents of a different
model family (xAI Grok; OpenAI Codex on a second model) under the protocols recorded in the
repository, and the two external adversarial review rounds of \cref{sec:lean} and the external
review of this manuscript were carried out by such agents. \emph{The manuscript.} This text was largely written by Claude Fable~5.1
(Anthropic) from the author's plan, the repository's proofs and certificates and the literature
corpus, and was reviewed step by step by an independent automated reviewer (OpenAI Codex,
gpt-6-astra) whose verdicts are archived with the review package; the author directed, read and
edited every part.

\appendix
\input{appA_data}
\input{appB_census}
\input{appC_lean}
\input{appD_calibration}
\input{appE_geometry}
\clearpage
\input{notation}
\clearpage

\bibliographystyle{plainnat}
\bibliography{refs}

\end{document}

%% file: macros.tex
\usepackage[T1]{fontenc}
\usepackage[utf8]{inputenc}
\DeclareUnicodeCharacter{2014}{---}
\DeclareUnicodeCharacter{2192}{\ensuremath{\to}}
\DeclareUnicodeCharacter{21A6}{\ensuremath{\mapsto}}
\DeclareUnicodeCharacter{03C7}{\ensuremath{\chi}}
\DeclareUnicodeCharacter{2200}{\ensuremath{\forall}}
\DeclareUnicodeCharacter{2227}{\ensuremath{\wedge}}
\DeclareUnicodeCharacter{2264}{\ensuremath{\le}}
\usepackage{amsmath,amssymb,amsthm,mathtools}
\usepackage{graphicx}
\usepackage{booktabs}
\usepackage{longtable}
\usepackage{array}
\usepackage{xcolor}
\usepackage{enumitem}
\usepackage{caption}
\usepackage{microtype}
\usepackage{tikz}
\usetikzlibrary{positioning,arrows.meta}
\usepackage[numbers,sort&compress]{natbib}
\usepackage[colorlinks=true,linkcolor=blue!50!black,citecolor=blue!50!black,urlcolor=blue!50!black]{hyperref}
\usepackage[capitalise,noabbrev]{cleveref}

\graphicspath{{figures/}}

\theoremstyle{plain}
\newtheorem{theorem}{Theorem}[section]
\newtheorem{lemma}[theorem]{Lemma}
\newtheorem{proposition}[theorem]{Proposition}
\newtheorem{corollary}[theorem]{Corollary}
\theoremstyle{definition}

\theoremstyle{remark}

\newcommand{\R}{\mathbb{R}}
\newcommand{\Z}{\mathbb{Z}}
\newcommand{\Per}{\operatorname{Per}}
\newcommand{\Sym}{\operatorname{Sym}}
\newcommand{\solid}{Q}
\newcommand{\carrier}{P}
\newcommand{\atlas}{\mathcal{A}_{44}}
\DeclareUrlCommand\file{\urlstyle{tt}}
\newcommand{\lean}[1]{\begingroup\def\_{\char`\_\allowbreak}\texttt{#1}\endgroup}

\newcommand{\tier}[1]{\textsc{\small #1}}
\newcolumntype{L}[1]{>{\raggedright\arraybackslash}p{#1}}

\definecolor{outlinegray}{gray}{0.40}

\newcommand{\frontmatternote}{\noindent{\small\emph{Proof submission.} The finite checks, the
nineteen former hypothesis statements, and the logical assembly of the four tiling clauses are
Lean theorems, kernel-checked modulo the named compiler hooks. \Cref{app:lean} records the empty
hypothesis structure and the dependencies; the written proofs remain as exposition.}\par\medskip}

%% file: sec1_intro.tex
\section{Introduction}\label{sec:intro}

\subsection{The einstein problem in \texorpdfstring{$\R^3$}{R3}}\label{sec:problem}

% ledger: Q1, Q2, Q23, T1, T3, O1, O4, O12 (Figure 1); source 1009.1419 L88; 2509.12216 L249-L253; form 2303.10798 L13, L43
Socolar and Taylor, presenting the three-dimensional version of their hexagonal tile, set out
what remained to be found: ``It is still interesting, however, to search for a single, simply
connected 2D or 3D prototile that forces maximal nonperiodicity by shape alone, or one that does
not permit any weakly nonperiodic tilings'' \cite{ST12}. Writing after the hat, Kaplan restated
the demand from the other side: ``Some people regard this screw motion as uncomfortably close to
translation, and demand a strongly aperiodic 3D monotile, one that admits tilings whose symmetries
never include an infinite cyclic subgroup of any kind. Many of the ideas and algorithms we used to
prove the hat's aperiodicity could be adapted to work in 3D space, but personally I am daunted by
the prospect of deducing the behaviour of any candidate shapes that might be discovered there''
\cite{Kap25}. The solid $\solid$ of \cref{fig:tile} is such a prototile: a closed polyhedral
$3$-ball with rational vertex coordinates that admits tilings of $\R^3$ by congruent copies, and
every tiling it admits has a finite symmetry group (\cref{thm:main,thm:full} below).

\begin{figure}[ht]
  \centering
  \includegraphics[width=\linewidth]{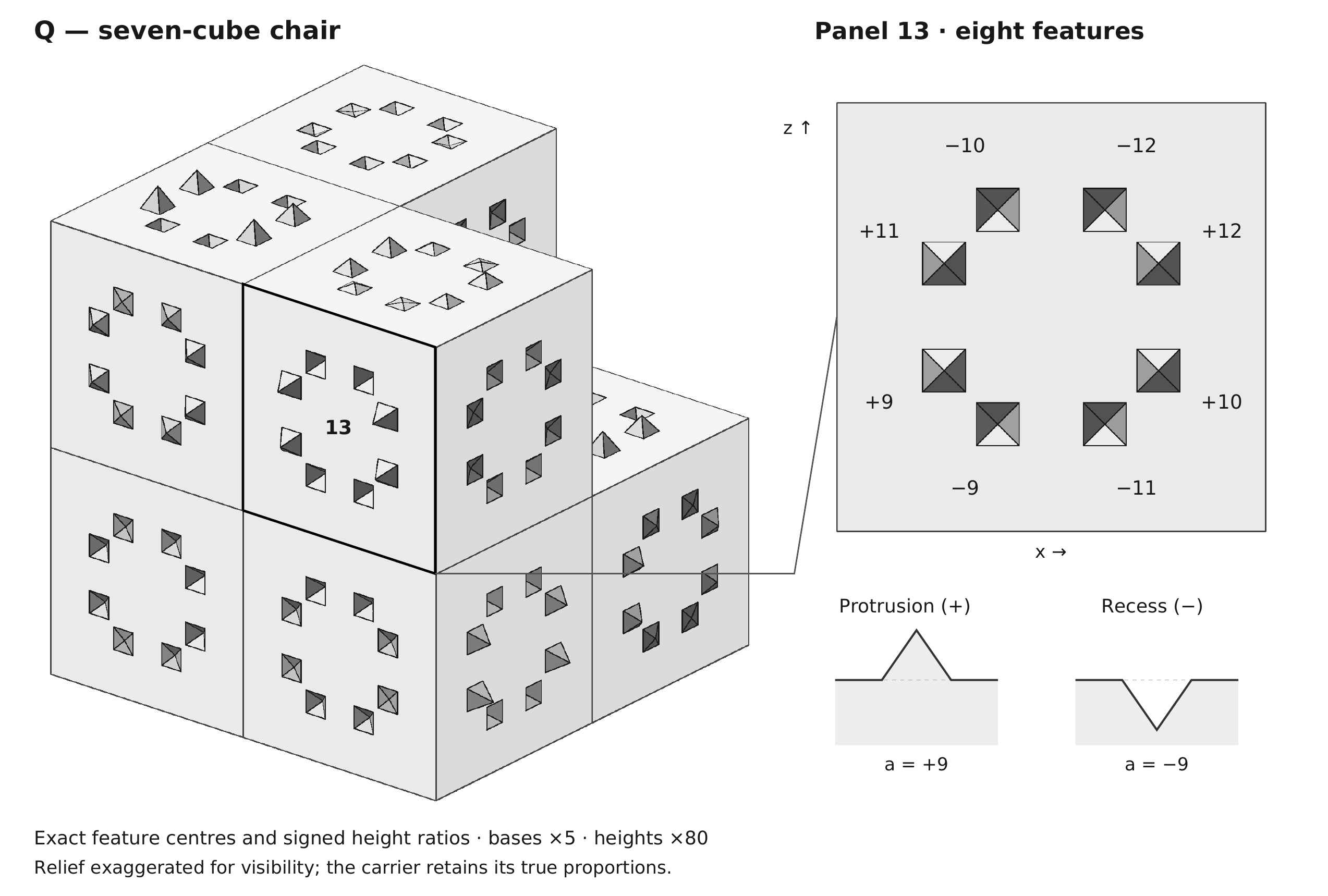}
  \caption{The solid $\solid$, with feature bases enlarged $\times 5$ and signed heights
  $\times 80$ for visibility. The seven-cube carrier, feature centres, polarities and relative
  heights are preserved. The outlined panel~$13$ and its inset show the same eight features;
  signed labels give $a$, whose true height is $a/10000$ (positive for a protrusion, negative
  for a recess). The sections illustrate its $+9$ and $-9$ features. The true base side is
  $1/50$; the drawing is not at true feature scale.}
  \label{fig:tile}
\end{figure}

% ledger: Q21; source 2303.10798 L18-L24
The study of aperiodic tilings begins with Wang's tiles and his conjecture that any set of
Wang tiles admitting a tiling admits a periodic one \cite{Wan61}; Berger disproved it with an
aperiodic set of $20{,}426$ Wang tiles and derived the undecidability of the domino problem
\cite{Ber66}. Robinson gave a six-tile aperiodic construction \cite{Rob71} and Penrose a two-tile
one \cite{Pen78}; these counts use different permitted motions and matching conventions from Wang
tiles, for which Jeandel and Rao proved that the minimum is eleven \cite{JR21}. The natural endpoint
of this reduction is a single shape that admits tilings but only non-periodic ones, an aperiodic
monotile or ``einstein'' \cite{SMKGS24}.

% ledger: Q12, Q8, Q21; source 2303.10798 L28-L35; 1903.01158 L11; 2211.15847 L28
Every candidate before the hat conceded something. Gummelt's decagon covers the plane only with
overlaps \cite{Gum96}; Penrose's $1+\varepsilon+\varepsilon^2$ set lets an arbitrarily high
fraction of the area lie in one tile, but the other tiles remain necessary \cite{Pen97}; the
Taylor--Socolar hexagon needs markings that let
non-adjacent tiles exchange information, and a geometric version of the rule needs a disconnected
tile, a tile with cutpoints, or a three-dimensional shape tiling a thickened plane
\cite{Tay10,ST11,ST12}; the Walton--Whittaker tile uses orientational rules \cite{WW23}; and
Mampusti and Whittaker say of their dendritic construction that ``the monotile we present here
is not an einstein in the technical sense'' \cite{MW20}. In a different direction, Greenfeld and
Tao disproved the periodic tiling conjecture with a single tile that tiles $\Z^d$ by translations
only, for a dimension $d$ that ``will be extremely large'' and with a tile that need not be
connected; whether $\Z^3$ admits such a tile they leave open \cite{GT24}, and Greenfeld and
Kolountzakis showed that the tile can be taken connected \cite{GK23}.

% ledger: Q9, Q10, Q25; source 2303.10798 L13, L43; 2305.17743 L45
In the plane the question is settled. The hat, a polykite, is an aperiodic monotile whose tilings
use both the tile and its mirror image \cite{SMKGS24}; the Spectre tiles aperiodically with a
single handedness even when reflections are allowed \cite{SMKGS24b}. Both papers give a
hierarchical, computer-assisted proof, and the hat paper a second, computer-free one. Simplified
proofs and the structure of the tilings followed quickly \cite{AA25,BGS25,BGMM25,Soc23}.

% ledger: Q7, Q6, Q13, Q2, Q24; source 2303.10798 L33, L37; 1009.1419 L14-L16, L30, L69-L72; 1003.4279 L171; 1003.4909 L45-L48
Three dimensions have their own history. The second part of Hilbert's eighteenth problem asked
whether anisohedral polyhedra exist in $\R^3$ \cite{Hil02}; Reinhardt found one \cite{Rei28},
Heesch a planar one \cite{Hee35}. The Schmitt--Conway--Danzer biprism, a convex rhombic biprism
going back to an unpublished 1988 example of Schmitt \cite{Sen96,Rad20}, admits, with reflections
forbidden, only non-periodic tilings: its tilings are periodic layers stacked with a twist, and it
admits a tiling whose symmetry group contains a screw motion; with reflections allowed it admits
a periodic tiling; Socolar and Taylor call the structure ``heterogeneously periodic'' \cite{ST12}. Their own three-dimensional tile enforces the
hexagonal matching rules by shape alone and can be made a topological sphere, but its tilings form
``corrugated slabs of the limit-periodic pattern that stack periodically in the direction
orthogonal to the nonperiodic tiling plane'' \cite{ST11}, which makes it ``a weakly nonperiodic
tiling by Goodman-Strauss's definition'' \cite{ST12}. Fletcher turned \v{C}ul\'ik and Kari's set of
$21$ Wang cubes \cite{CK95} into a single cubic prototile whose $21$ types are $21$ orientations,
but the matching rules become an atlas of legal one-corona patches rather than the shape of the
tile \cite{Fle10}. Kaplan's survey
calls the biprism ``both an existence proof and a cautionary tale about the definition of
aperiodicity in 3D space'' \cite{Kap25}.

% ledger: Q3, Q4, Q5, Q26, T4, T5, N3; source 2303.10798 L33-L34, L64; 2409.15880 L43-L45, L88, L234, L236
That definition is the point on which the three-dimensional problem turns. The hat paper,
``following Mozes'', calls a set of tiles \emph{strongly aperiodic} if it admits tilings but none
with any infinite cyclic symmetry, and \emph{weakly aperiodic} if it admits tilings but none with
a cocompact discrete group of symmetries \cite{SMKGS24,Moz97}. In the Euclidean plane the two
notions coincide for normal tiles \cite[Theorem~3.7.1]{GS16}; in the hyperbolic plane weak
aperiodicity comes readily, as with B\"or\"oczky's monotile \cite{Bor74}, while Goodman-Strauss
constructed a strongly aperiodic set there \cite{GS05}. In $\R^3$ the notions differ: with
reflections forbidden, the biprism is weakly but not strongly aperiodic. We prove the strong form: every tiling by $\solid$ has a symmetry group of
order at most $24$, and a finite group has no element of infinite order (\cref{cor:strong}). Coulbois, Gajardo,
Guillon and Lutfalla use a finer taxonomy in which a tile all of whose tilings have finite
symmetry group is \emph{mildly} aperiodic and ``strongly aperiodic'' is reserved for trivial
stabilizers; in their terms the hat is mildly aperiodic, ``strongly aperiodic monotiles are
unknown in all settings'', and their Question~32 asks for one \cite{CGGL24}. In their terms
$\solid$ is mildly aperiodic and not strongly aperiodic: an explicit tiling by $\solid$ has
symmetry group of order eight, and others have trivial symmetry group
(\cref{sec:aperiodicity,sec:limitperiodic}).

% ledger: O10, O1, O4, N4, T10
$\solid$ is not convex, as the biprism is, and it is not simple: it is a seven-cube chair whose
$24$ exposed unit panels each carry eight square pyramids of height between $1/10000$ and
$12/10000$ (\cref{sec:solid}). It is, however, a closed topological $3$-ball with rational
coordinates, and no matching rule of any kind is imposed: the tilings in \cref{thm:full} are all
the ways congruent copies of the solid can fill space without overlapping.

\subsection{Main result}\label{sec:main}

% ledger: T1-T4, T6, T8, T10, D4, T12; form 2303.10798 L42-L43; 2305.17743 L45
In this paper we prove the following, as a proof submission in the sense explained at the end of
this subsection.

\begin{theorem}\label{thm:main}
The solid $\solid$ of \cref{fig:tile} is a strongly aperiodic monotile of $\R^3$: $\solid$ admits
tilings of $\R^3$ by congruent copies, and no such tiling has a symmetry of infinite order.
\end{theorem}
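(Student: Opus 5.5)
The proof will follow the hierarchical template of the hat and Spectre papers, organized around the sections announced in the outline. It has two halves: existence (\cref{sec:companions,sec:hierarchy}) and the absence of infinite-order symmetries (\cref{sec:registration,sec:aperiodicity}). For existence I would build an explicit substitution on the seven-cube chair carrier $\carrier$. The three-dimensional chair is a rep-tile: eight half-scale chairs tile a doubled chair, so the carrier admits a self-similar hierarchy. I would check that in the limit tiling every panel contact is one of the $44$ contacts of the atlas $\atlas$, that is, one where the pyramid features of $\solid$ interlock, protrusion into recess with matching signed heights. Only finitely many contact types occur, and all of them occur already inside a bounded supertile, so the check is a finite enumeration. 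A compactness argument (nested patches whose union exhausts $\R^3$) then yields a tiling of $\R^3$ by copies of $\solid$.

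For the second half I would proceed in three steps.

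\textbf{Registration.} I would first prove the written geometric lemma: in any tiling by $\solid$, the features force every copy to sit on a common registered cubic lattice, with unit cubes aligned and panels matched face to face. Since the feature heights are at most $12/10000$ and the bases have side $1/50$, a misaligned or sheared contact would leave a protrusion with no room or a recess unfilled. After this step the tiling reduces to a combinatorial object: a placement of chairs on $\Z^3$ with orientation and reflection labels, subject to the local rule given by $\atlas$.

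\textbf{Unique decomposition.} Next comes the finite test highlighted in the abstract. From the census of legal one-corona (or bounded-radius) patches I would show that every tile belongs to a unique parent supertile. I would also show that the decoded parent tiling again obeys exactly the rule $\atlas$, so the contact rule is a fixed point of coarsening. Iterating gives a unique infinite hierarchy, and every tiling turns out to be homochiral.

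\textbf{Symmetry bound.} Suppose $g$ is a symmetry of a tiling $T$. Uniqueness of the hierarchy means $g$ maps the level-$n$ supertiling to itself for every $n$. A translation by $v\neq 0$ would then be a period of the supertiling at scale $2^n$ for all $n$, which is impossible once $2^n$ exceeds $|v|$ relative to the lattice; the same holds for a screw motion, because its square, or some bounded power, has a translational part. More generally, suppose $g$ has infinite order. Then the group generated by $g$ is infinite, and since the tiling is registered it acts on the lattice by isometries preserving $\Z^3$. Every such isometry is a composition of a lattice translation with one of the $48$ cubic point symmetries, so some power $g^k$ with $k\le 48$ is a nonzero translation, which contradicts the previous argument. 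This shows that the symmetry group is finite. The sharper bound of $24$ then comes from homochirality, which excludes orientation-reversing symmetries and leaves the rotation group of the cube.

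The main obstacle is the registration lemma. Everything after it is a finite census that can be machine-checked, but forcing the lattice requires a genuinely continuous argument about how tiny pyramids can abut, including partial panel overlaps, offsets by fractions of a unit, and contacts along edges. My first attempt would be a local case analysis on a single panel. Its eight features, with their distinct signed heights, should admit only the identity alignment against a matching panel of another copy. I would then propagate this alignment across the carrier's faces by face-to-face adjacency.
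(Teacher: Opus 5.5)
The gap is the registration lemma, and the method you propose for it would not work. The rest of your outline matches the paper: unique parents from a finite first-shell census, decoded parents obeying exactly $\atlas$, and a nonzero period that would have to survive at every scale $2^n$. Your reduction of an infinite-order symmetry to a nonzero lattice translation, via its cubic linear part, is also sound, given that $\solid$ has no nonidentity self-isometry.

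You cannot start from ``a matching panel of another copy'': in an arbitrary tiling no neighbour need meet a panel flush. A ``recess left unfilled'' is no contradiction either, since something fills it; the question is what can. The missing idea is an angle budget. At a generic point of a feature edge the root contributes a sector of $\pi$ plus a nonzero deviation smaller than $\pi/4$, and the $24$ base and ridge deviations are all distinct. Every non-feature sector is a multiple of $\pi/2$. A single feature sector plus multiples of $\pi/2$ cannot total $2\pi$, while three feature sectors, or two plus any ordinary sector, exceed it. So exactly one other tile is present there, carrying an edge of the same type and magnitude with the opposite deviation (\cref{lem:partner}). A solid-angle bound, exceeding $4\pi/3$ at every point of a feature graph including the apex and base corners, excludes a third tile on the graph. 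Connectedness of the edge graph then gives one neighbour carrying an entire congruent pyramid of opposite polarity (\cref{thm:companion,lem:equal}). This is what handles partial overlaps, fractional offsets and edge contacts.

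Even this only discretizes the neighbour's relative pose to a finite list of $6{,}862$ candidates, with all $48$ signed frames and translations in $\tfrac18\Z^3$. It does not align a whole panel. The eight features of one panel may a priori be accompanied by different tiles, so your single-panel analysis presupposes a neighbour presenting a full matching panel, which is essentially what is to be proved. The paper removes the non-registered candidates with a forced third tile. For each rejected relative pose of two tiles $U,V$, some feature of $U$ or $V$ has every possible complete companion overlapping the other tile. An overlap of eighth-grid baseline cubes survives erosion by the tube height $1/100$, so it is an overlap of interiors (\cref{prop:census,lem:core,lem:44}).

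Propagating ``by face-to-face adjacency'' is circular. It also does not exclude tilted or shifted tiles lodged in gaps around a registered network. You need that the registered component's cells exhaust $\Z^3$ and that its solids cover $\R^3$ (\cref{lem:grid,lem:cover}), so that no other tile exists.

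Two smaller points. Homochirality comes directly from registration, because every surviving pose is proper, not from the hierarchy. For existence, the chair tiling must still be carried to a tiling by $\solid$ by the tube (small-collar) realization.
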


\begin{theorem}\label{thm:full}
Let $\solid$ be the rational polyhedral solid of \cref{sec:solid}. Then
\begin{enumerate}[label=\textup{(\arabic*)}]
  \item there is a tiling of $\R^3$ by isometric copies of $\solid$;
  \item for every tiling $T$ by isometric copies of $\solid$, $\Per(T)=\{0\}$ and $|\Sym(T)|\le 24$;
  \item every tiling $T$ is homochiral;
  \item in every tiling $T$ each tile lies in a unique infinite hierarchy of $2^n$-scaled chair
    supertiles in registered poses, nested through the eight child poses.
\end{enumerate}
Isometries include reflections. No face-to-face, lattice, common-orientation,
connected-contact-graph or local-finiteness hypothesis is imposed.
\end{theorem}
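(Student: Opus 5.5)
The proof reduces both theorems to a statement about the chair substitution on a registered lattice; once that reduction is in place, aperiodicity follows from the standard hierarchy argument. \Cref{thm:main} follows from \cref{thm:full}(2): a finite group has no element of infinite order. So the real work is \cref{thm:full}. I would proceed in four stages, in the order of the paper's sections: registration, decoding, hierarchy, aperiodicity.

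\emph{Existence and registration.} For clause (1) I would use the classical self-similar chair tiling of $\R^3$ by seven-cube chairs. Each chair is subdivided into eight half-scale chairs in fixed child poses. Iterating this from a fixed seed, and taking a nested union, gives a tiling by carriers $\carrier$. Decorating each panel with its pyramid features and checking that every contact realized in this tiling belongs to the atlas $\atlas$ of $44$ admissible contacts makes the features mesh (protrusion into recess of equal height). This yields a tiling by copies of $\solid$.

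For the converse direction, the written geometric lemma is registration. Let $T$ be any tiling by isometric copies of $\solid$, with no face-to-face or local-finiteness assumption. I would first show that the features, which have tiny bases and distinct signed heights, force every exposed panel to meet another panel with exactly complementary features. This pins each tile's pose to a rotation or reflection in the cube group composed with an integer translation, relative to one global frame. The argument is local: a feature of height $a/10000$ can only be accommodated by a recess of depth $a/10000$ with the same base. So panels must coincide exactly, and adjacency propagates a common lattice $\Z^3$ through the connected body of $\R^3$. Clause (3), homochirality, then comes from the atlas: a contact between a tile and its mirror image is absent from the $44$ contacts, so chirality is constant along contacts and hence globally.

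\emph{Hierarchy.} Once $T$ is registered, it is a configuration over a finite alphabet of lattice poses satisfying the local rule $\atlas$. I would then invoke the finite machine-checked gate: every patch legal under $\atlas$ decodes uniquely into groupings of eight tiles forming a $2$-scaled chair in one of the registered parent poses. Moreover, the induced parent tiling again satisfies exactly $\atlas$, so the rule survives coarsening. Applying this recursively gives, for each tile, a unique nested sequence of $2^n$-scaled supertiles, which is clause (4). Uniqueness at each level comes from the uniqueness of the decoding.

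\emph{Aperiodicity and the symmetry bound.} For clause (2), suppose $t\neq0$ is a period of $T$. By uniqueness of the hierarchy, $t$ maps level-$n$ supertiles to level-$n$ supertiles for every $n$. So $t$ must lie in the level-$n$ registration lattice $2^n\Z^3$ for every $n$, which forces $t=0$. The same argument applied to an arbitrary symmetry $g$ shows that $g$ preserves every level of the hierarchy. Because supertiles grow and the hierarchy is nested, $g$ must fix a point, namely a limit point of the nested supertiles or one lying in a bounded number of level-$n$ supertiles. Hence $g$ lies in a point group contained in the cube group of order $48$. Homochirality excludes the orientation-reversing half, which gives $|\Sym(T)|\le 24$.

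I expect the main obstacle to be the registration step. Ruling out sliding contacts, partial panel overlaps and non-locally-finite pathologies, using only the tiny features, requires a careful case analysis of how two panels can meet. The coarsening gate is finite and delegated to enumeration.
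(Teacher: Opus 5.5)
Your outline of existence, decoding, hierarchy and period halving follows the paper, but the registration step, which carries the whole universality half, has a genuine gap. From ``a pyramid can only be accommodated by a recess of the same magnitude with the same base'' you conclude that panels coincide exactly, so every pose is a cubic frame with an integer translation. That does not follow. A complete feature match forces only the two base squares to coincide. The panel planes then agree, but the unit panels may be offset by eighth units, and the frame may be any signed permutation, of either determinant, carrying one normal to minus the other. This leaves $6{,}862$ candidate poses with translations in $\tfrac18\Z^3$ (\cref{cor:discrete}). Of these, $5{,}317$ do not overlap the root, $5{,}234$ of those are non-integral, and only $44$ of the $83$ integral ones survive. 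Nothing local to the matched feature excludes the others. The paper excludes these $5{,}273$ poses by an exhaustive census (\cref{prop:census}). For each pose the census names a feature of one of the two tiles whose companion cannot be the other tile, and shows that every candidate companion of that feature overlaps the other tile; \cref{lem:core} turns this into an overlap of interiors via the $21/200$ retained cores. You delegate enumeration only to the decoding and coarsening gates. Without this census you have neither integrality nor properness of the contact poses, and your homochirality argument takes properness from $\atlas$. Your local premise also needs an actual mechanism, since a protrusion could a priori sit in a cavity formed jointly by several tiles. The paper supplies one in three steps. The $24$ distinct deviations $\alpha_j,\beta_j$ and the $2\pi$ sector budget give exactly one partner with complementary dihedral at generic edge points (\cref{lem:partner}). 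The interior solid angle $>4\pi/3$ on the feature graph excludes a third tile (\cref{lem:solidangle}). Connectedness of $E(F)$ then puts the whole graph on one feature of one neighbour (\cref{thm:companion,lem:equal}).

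Two further steps need repair. First, propagating the lattice ``through the connected body of $\R^3$'' presupposes a connected contact graph, which the theorem explicitly does not assume. The paper shows that the baseline cells of the root's companion component are closed under lattice steps, because each exposed panel's companions own the adjacent cell, so they fill $\Z^3$. The tube homeomorphism then shows that the component's copies of $\solid$ physically cover $\R^3$, so no tile lies outside it (\cref{lem:grid,lem:cover,thm:registration}). Your global homochirality rests on this as well. Second, your claim that every symmetry fixes a limit point of nested supertiles is not justified: a tile's ancestors need not exhaust space (the paper exhibits ancestors filling only an octant). Even a fixed point for each element would not make $\Sym(T)$ a point group without a common fixed point. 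The direct argument is the paper's. Registration and the absence of self-isometries of $\solid$ put every linear part in a conjugate of the $24$-element proper cubic group. Two symmetries with the same linear part differ by an element of $\Per(T)=\{0\}$, so the linear-part map is injective. That same absence of self-isometries is also what makes a period an integer vector, which your halving step silently uses.
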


% ledger: T10, T11, D1, Q3; source 2303.10798 L58
Here $\Per(T)$ is the set of translation vectors $v$ with $T+v=T$ and $\Sym(T)$ the group of
isometries of $\R^3$ that permute the tiles of $T$ (\cref{sec:terminology}). The absence of
hypotheses in \cref{thm:full} is deliberate. A tiling is any family of isometric copies of
$\solid$ with pairwise disjoint interiors whose union is $\R^3$: the copies need not meet
face-to-face, need not sit on a lattice, need not share an orientation, and their contact graph
need not be connected. Local finiteness, which the hat paper notes is automatic for monohedral
plane tilings by closed disks, is proved here from the geometry of $\solid$
(\cref{sec:companions}) rather than assumed.

% ledger: T6, T7, D5, D7, Q7; source 2305.17743 L22; 2303.10798 L52-L54; 1009.1419 L72
Reflections are allowed in \cref{thm:full}, and clause (3) says what they do: nothing. By
longstanding tradition in the tiling literature, ``shapes are considered congruent if they are
equivalent under any Euclidean isometry, including those that reverse orientation''
\cite{SMKGS24}, and the hat's status as a monotile depends on that convention, since every hat
tiling uses both handednesses. $\solid$ needs no such caveat. Its tilings are \emph{homochiral}
in the Spectre paper's sense, ``for every pair $T_i,T_j\in\mathcal{T}$, there is an
orientation-preserving isometry mapping $T_i$ to $T_j$'' \cite{SMKGS24b}: a tiling by $\solid$ is
wholly left-handed or wholly right-handed, and a reflected and an unreflected copy never occur in
the same tiling. By the same paper's definition, a tile that admits only homochiral non-periodic
tilings is a \emph{strictly chiral aperiodic monotile}; $\solid$ is one, in $\R^3$
(\cref{cor:chiral}). The
Schmitt--Conway--Danzer biprism, by contrast, is chiral but admits a periodic tiling once
reflections are allowed \cite{ST12}.

% ledger: T8, T9, N1, N2, A15, R7; source 2305.17743 L43, L45; 2303.10798 L253
Clause (4) is the mechanism. After one ambient isometry every tile of every tiling occupies one
of $24$ proper cubic frames at an integer position (\cref{sec:registration}); the seven-cube
chairs so placed group uniquely into $2$-scaled chairs, those into $4$-scaled chairs, and so on
without end (\cref{sec:hierarchy}). The hierarchy is unique at every level, and that is what
forces non-periodicity, as in the Spectre paper \cite{SMKGS24b}. We do not claim that the
supertiles of one tile exhaust space: as in the hat paper, tilings in which a tile sits at a
corner of its supertile at every level, along ``fault lines'', are not excluded
(\cref{sec:hierarchy}).

% ledger: D4, T12, M1, M2, N8; source README wording (proof submission)
\emph{Proof submission.} The finite parts of the argument, the enumerations of
\cref{sec:computer}, the written geometric lemmas, and the logical assembly from those lemmas to
\cref{thm:full} are theorems checked by the Lean~4 proof assistant, modulo the named compiler
hooks of \cref{app:lean}. The written lemmas concern the tube construction, feature companions,
global coverage and registration, carrier recovery, and small-collar realization. They are new,
written out in full, independently reviewed, and listed by name with the Lean theorems that
prove them in \cref{tab:hypotheses}; there are $19$ of them. Until the development and this
manuscript have been refereed we present the theorem as a proof submission.

\subsection{What an aperiodic monotile is, and how that found \texorpdfstring{$\solid$}{Q}}\label{sec:reading}

% ledger: H1, O5, A15; Lean registration_of_tiling, period_grid_from_registered_poses, period_halving
The proof below follows a reading of the phenomenon that we state first, because it is what
produced the solid. Consider a labelling $x\colon\Z^3\to A$ of the cubic lattice by a finite
alphabet. For a nonzero $p\in\Z^3$, $x$ is $p$-periodic exactly when it factors through
$\Z^3\to\Z^3/\langle p\rangle$; this quotient is infinite. For tilings by $\solid$ the connection
to labellings is
registration (\cref{sec:registration}): after one ambient isometry every tile of every tiling
sits in one of $24$ proper cubic frames at an integer position, and, because $\solid$ has no
self-isometry, every translational period of the tiling is then an integer vector
(\cref{sec:aperiodicity}). We use only this direction; the paper does not identify the periods of
a tiling with those of a labelling in any packaged form. For $\solid$, we prove existence and
exclude every nonzero translational period directly, using registration and period halving. The
obstruction is productive: the local pieces do fit together globally; what fails is fitting them
together periodically.

% ledger: H3, Q11; source 1608.07165 L80, L308; 2305.17743 L43
A hierarchy certifies aperiodicity only when it is forced. Grouping the tiles of a tiling into
supertiles proves nothing by itself: on the cubic grid an observer may group cells into
$2\times2\times2$ blocks in eight ways, and the grouping adds information the tiling does not
carry. What proves non-periodicity is a grouping that the tiling determines and that every
symmetry of the tiling therefore preserves. Then a symmetry of the tiling is a symmetry of the
grouped tiling at every level, so a translation that was a symmetry would survive coarsening to
every scale; if each coarsening halves it, it was never there. This is the recognizability,
unique-composition and period-halving mechanism used here \cite{GS18,SMKGS24b}, and we use it as
a design target.

% ledger: H4; source packets P3-P5 SOURCE_AUDIT (structural interpretation); history/cascade/findings.md L44
With marked tiles the rule that decides which neighbours are legal is external to the pieces; a
geometric monotile has to put that rule into what can physically fit. For a marked system with a
forced hierarchy to become one bare solid, three things must hold: a solid realizing the marked
system exists; every tiling by the solid decodes into a legal configuration of the marked
system; and a period of the tiling is a period of the decoded configuration. The second is the
one that is easy to lose. It is not enough that the intended tilings survive: every tiling
admitted by the bare solid must decode into the intended admissible marked system. One shape does
not mean one role: copies of a
single solid occupy different positions in the forced structure, and the diversity that a set of
marked tiles carries in its catalogue is carried here by the relations between congruent copies.

% ledger: H5, R7; Lean parent_atlas_eq_fine
This reading yields a finite test. Write the legal contacts between neighbouring copies of the
solid as a language of contacts, and decode a tiling into the tiling by its parents. The halving
argument needs the parent tiling to be again a legal tiling of the same solid, that is, the
language of contacts among decoded parents must lie inside the language of contacts among the
tiles themselves. Equality of the two languages is the form we check, and it is sufficient for
the argument, not necessary: a strictly smaller decoded language would also do. When the decoded
language is strictly larger the argument fails, and whether a periodic parent tiling then exists
must be checked case by case; for the first bare solid we built, it did (\cref{sec:found}). For
$\solid$ the test is one finite identity, verified by machine: the halved parent atlas equals the
fine atlas of $44$ legal contacts (\cref{sec:designtest}).

% ledger: H2, H6, H11 (pointer sentence only; framework vocabulary confined to sec:found)
The reading was reached with the Six Birds emergence calculus \cite{Tsi26F6,Tsi26F4,Tsi26NDO};
\cref{sec:found} quotes its statement of the phenomenon and relates it, clause by clause and at
each clause's level of verification, to the theorems proved here. Nothing in that section is used
in the proofs.

% ledger: H11, N7, O8, R10, A7, A11, A15, R6, R7, T13; source 2303.10798 L51; figure roadmap
The proof is the discharge of that reading, and \cref{fig:roadmap} shows its shape. Existence
comes from a substitution: eight rotated copies of the chair form a doubled chair, and iterating
the dissection gives a registered tiling of the chair which the features accept
(\cref{sec:finding}). Universality comes from the features: each pyramid forces a companion
pyramid of the opposite polarity on a neighbouring tile, the companion forces the neighbour's pose
into a discrete set, and an exhaustive census reduces that set to the $44$ legal contacts
(\cref{sec:companions}); from there every tiling is registered (\cref{sec:registration}), every
registered tiling has a unique parent tiling at every level (\cref{sec:hierarchy}), and a
translational period would have to halve at every level, hence vanish (\cref{sec:aperiodicity}).
As in the hat paper, the computer-assisted census is needed only to show that all tilings follow
the hierarchy; it is not needed to show that a tiling exists.

\begin{figure}[ht]
  \centering
  \resizebox{\linewidth}{!}{\input{fig_roadmap}}
  \caption{Roadmap of the proof. Each box names its section and the rows of the claims ledger
  (\cref{app:lean}) it rests on.}
  \label{fig:roadmap}
\end{figure}

\subsection{Computer assistance and verification}\label{sec:computer}

% ledger: M1-M5, M7, T12, Q14, Q20, R2, R4, R6, R7, A11, L1; source 2303.10798 L51, L158; 1808.07768 L51, L253; 1506.06492 L94
Several steps of the proof are exhaustive finite enumerations, listed in \cref{tab:census}: the
$2{,}388$ candidate contact poses of which exactly $44$ are legal; the $33$ first shells and $28$
parent conflicts behind the unique-parent theorem; the $697\to116\to44$ parent contacts of the
coarsening; the companion census of $6{,}862$ mate poses reduced to $44$ through $299{,}975$
collision boxes; and the $168$-label substitution with its primitivity exponent and modular
coincidence depth. The companion census, the largest of these, was checked by two independent
Python implementations. Every enumeration is stated as a
theorem in Lean~4 and checked by its kernel: those that decide directly by \lean{decide}, and
those too large for the kernel's evaluator by \lean{native\_decide}, which adds, per theorem, one
named compiler trust hook that we list in \cref{app:lean}. No floating-point number enters any
checked statement: certificates are integers and exact rationals, the Python replay uses exact
arithmetic, and Lean decides integer literals. The logical assembly from the enumerations and the
named written lemmas to \cref{thm:full} is itself a Lean theorem, and the written lemmas of
\cref{app:solidchecks,sec:finding,sec:companions,sec:registration} are themselves Lean theorems, so that
the main theorem is kernel-checked modulo the named compiler hooks listed in \cref{app:lean}; the
written proofs remain as exposition. Negative
controls (must-fail variants of the inputs and of the formal statements, each checked to fail as
expected) accompany the positive build. A one-command replay of the enumerations from the Python
standard library runs in about a minute (\cref{app:data}). The only comparable formalization we
know of is Myers's staging repository for the hat and Spectre, still in progress
\cite{Mye24lean,Lean4}. \emph{Code.} The repository is given in \cref{app:data}; its release tag is
\texttt{paper-v1} (intended; tag creation and licensing remain pending owner actions).

\subsection{Terminology}\label{sec:terminology}

% ledger: D1, D2, D3, D9; source 2303.10798 L25, L57-L58, L61; 1003.4909 L25
Terminology follows Grünbaum and Shephard as the hat paper uses it \cite{GS16,SMKGS24}, with
``closed topological disk'' replaced by ``closed topological $3$-ball''. A tile is a closed set;
a tiling by a set of tiles is a collection of images of tiles under isometries with pairwise
disjoint interiors whose union is the whole space; the set, or the single tile, \emph{admits} the
tiling. A tiling is monohedral if all its tiles are congruent, congruences including mirror
reflections, and locally finite if every ball meets only finitely many tiles. A \emph{monotile}
is a tile admitting a monohedral tiling; adapting the hat paper's reservation of the terms to the
present setting, an \emph{aperiodic monotile} or \emph{einstein} here is a closed topological
$3$-ball admitting tilings but only non-periodic ones, by geometry alone, without any
non-geometric matching rule constraining tile adjacencies. \emph{Registered} tilings, the \emph{atlas} of legal contacts,
and the \emph{features}, \emph{panels} and \emph{carrier} of $\solid$ are our own terms, defined
in \cref{sec:solid,sec:finding}; a \emph{supertile of level $n$} is a $2^n$-scaled chair in a
registered pose (\cref{sec:hierarchy}). Here a patch means a finite subfamily of a packing;
unlike the hat paper's disk-patch convention \cite{SMKGS24}, we impose no topological-ball
condition. A $1$-corona consists of the tiles touching the chosen tile \cite{Fle10}. The notation
is collected in \cref{app:notation}, \cref{tab:notation}.

% ledger: D1 (symmetry), Q3, T5, Q5, O5; source 2303.10798 L34, L64; 2409.15880 L43-L45, L73-L76
We quote the hat paper's definitions. ``The symmetry group of a tiling is the group of those
isometries that act as a permutation on the tiles of the tiling. A tiling is weakly periodic if
its symmetry group has an element of infinite order \dots\ A tiling is strongly periodic if the
symmetry group has a discrete subgroup with cocompact action on the space tiled \dots\ A set of
tiles (or a single tile) is weakly aperiodic if it admits a tiling but does not admit a strongly
periodic tiling, and strongly aperiodic if it admits a tiling but does not admit a weakly periodic
tiling.'' And: ``Following Mozes [Moz97], we say a set of tiles is strongly aperiodic if it admits
tilings but none with any infinite cyclic symmetry'' \cite{SMKGS24,Moz97}. Since a finite group
has no element of infinite order, ``strongly aperiodic'' in this sense is what
\cref{thm:full}(2) gives. In the finer taxonomy of Coulbois, Gajardo, Guillon
and Lutfalla \cite[\S2.1]{CGGL24} a tile all of whose tilings have finite symmetry group is
\emph{mildly} aperiodic, and \emph{strongly} aperiodic is reserved for trivial stabilizers; in
their terms $\solid$ is mildly aperiodic, and it is not strongly aperiodic in that stricter
sense, since an explicit tiling has symmetry group of order eight (\cref{sec:aperiodicity}).
They also distinguish the symmetry group of a tiling from the
stabilizer of its family of placements; the two coincide when the tile has no nonidentity self-isometry, as
$\solid$ has none (\cref{app:asymmetry}), and we write $\Sym(T)$ throughout.

% ledger: D5, Q10; source 2305.17743 L22, L43
From the Spectre paper we take, verbatim, that ``a monohedral tiling $\mathcal{T}$ is a
homochiral tiling if for every pair $T_i,T_j\in\mathcal{T}$, there is an orientation-preserving
isometry mapping $T_i$ to $T_j$'' and that a strictly chiral aperiodic monotile is ``a tile that
admits only homochiral non-periodic tilings''; and that ``a set of tiles is hierarchical if, in
every tiling admitted by those tiles, every tile is nested within an infinite hierarchy of
ever-larger supertiles. If these hierarchies are uniquely determined, then the tilings that
contain them must be non-periodic'' \cite{SMKGS24b}. Fault lines are defined in
\cref{sec:hierarchy}, following the hat paper.

%% file: fig_roadmap.tex
% fig_roadmap.tex — Fig. 1.2, proof roadmap (FIGURES.md 1.2; hat Figs 1.2/1.3 role).
% Two flowcharts: existence (top) and universality (bottom). Each box carries its section and
% the CLAIMS_LEDGER.md ids it stands on; tiers are those of the ledger rows at the pinned commit.
\begin{tikzpicture}[
  node distance=4mm and 5mm,
  box/.style={draw, rounded corners=1.5pt, align=center, font=\footnotesize, inner sep=3pt,
              minimum height=9mm, text width=27mm},
  goal/.style={box, thick, fill=black!6},
  lab/.style={font=\scriptsize\itshape, text=black!60},
  arr/.style={-{Latex[length=2mm]}, thick}
]
  % --- existence ---
  \node[lab] (exlab) {Existence (\cref{sec:finding}): a tiling of $\R^3$ by $\solid$};
  \node[box, below=2mm of exlab.west, anchor=north west] (e1)
    {\S\ref{sec:substitution} eight child poses partition $2\carrier$ \\ (O8)};
  \node[box, right=of e1] (e2)
    {\S\ref{sec:substitution} the $44$-contact atlas, $24$ frames \\ (R2, R3)};
  \node[box, right=of e2] (e3)
    {\S\ref{sec:existence} registered baseline tiling \\ (R10)};
  \node[box, right=of e3] (e4)
    {\S\ref{sec:existence} small-collar realization \\ (R10, A16)};
  \node[goal, right=of e4] (e5)
    {\cref{thm:full}(1) \\ a tiling exists (Lean spine; \cref{app:lean})};
  \draw[arr] (e1) -- (e2); \draw[arr] (e2) -- (e3); \draw[arr] (e3) -- (e4); \draw[arr] (e4) -- (e5);

  % --- universality ---
  \node[lab, below=9mm of e1.south west, anchor=north west] (unlab)
    {Universality (\cref{sec:companions,sec:registration,sec:hierarchy,sec:aperiodicity}): every tiling by $\solid$};
  \node[box, below=2mm of unlab.west, anchor=north west] (u1)
    {\S\ref{sec:companions} features force companions \\ (A1--A9)};
  \node[box, right=of u1] (u2)
    {\S\ref{sec:registration} only registered mates; the census \\ (A10--A12)};
  \node[box, right=of u2] (u3)
    {\S\ref{sec:registration} registration, \cref{thm:registration} \\ (A13--A15)};
  \node[box, right=of u3] (u4)
    {\S\ref{sec:hierarchy} unique parent, \cref{thm:parent} \\ (R4--R6)};
  \node[box, right=of u4] (u5)
    {\S\ref{sec:hierarchy} coarsening; the hierarchy \\ (R7, T8)};
  \draw[arr] (u1) -- (u2); \draw[arr] (u2) -- (u3); \draw[arr] (u3) -- (u4); \draw[arr] (u4) -- (u5);

  \node[goal, below=of u5] (u6)
    {\cref{thm:full}(2): period halving, $\Per(T)=\{0\}$, $|\Sym(T)|\le 24$ \\ (T13, T2, T3)};
  \node[goal, left=of u6] (u7)
    {\cref{thm:full}(3): homochiral \\ (T6)};
  \node[goal, left=of u7] (u8)
    {\cref{thm:full}(4): unique hierarchy \\ (T8)};
  \draw[arr] (u5) -- (u6);
  \draw[arr] (u3.south) |- ([yshift=2mm]u7.north) -- (u7.north);
  \draw[arr] (u5.south west) -- (u8.north east);
\end{tikzpicture}

%% file: sec2_solid.tex
\section{The solid Chair44 (R44), \texorpdfstring{$\solid$}{Q}}\label{sec:solid}

\subsection{Start with the chair}

% ledger: O1, O9, O6, O12 (Figure 3), R1; source solid/r44_solid.json and solid/native_panels.csv
Take a $2\times2\times2$ block of unit cubes and remove one corner cube.
The remaining seven cubes form the \emph{carrier}, $\carrier$. Chair44 is this
chair with eight small bumps or dents on each exposed unit square. We call those
squares \emph{panels}; there are $24$ in all, including the three inside the notch.
\Cref{fig:carrier} gives the complete panel recipe. Its small signed numbers tell
you which feature to put at each position: $+j$ means a pyramid pointing out of
the solid, and $-j$ means a pyramid-shaped dent of the same depth. A bump and a
matching dent have equal magnitudes and opposite signs.

Fix coordinates by taking the removed cube to be $[1,2]^3$. Thus
\[
  \carrier=\bigcup_{a\in\{0,1\}^3,\ a\ne(1,1,1)}\bigl(a+[0,1]^3\bigr).
\]
The six outer faces contribute $4+4+4+3+3+3=21$ panels; the notch contributes
three more. The large numbers $0,\ldots,23$ in the diagram identify panels,
not heights. They retain the numbering of the exact solid file.

\subsection{Read the panel diagram}

% ledger: O1, O6, R1; source solid/native_panels.csv; coordinate conventions of the existing panel diagram
Each of the six diagrams in \cref{fig:carrier} collects panels with one outward
normal. The two axes named above a diagram increase to the right and upwards,
respectively. For example, ``normal $-y$; axes $x,z$'' means that $x$ increases
rightwards and $z$ upwards. The layouts use these global coordinates. To compare
them with a physical model, look at a panel from outside with the second named
axis pointing up. The diagrams for normals $+x$, $-y$ and $+z$ then look as drawn;
those for $-x$, $+y$ and $-z$ look reflected left to right. This change of viewpoint
does not change which feature belongs at each coordinate.

To locate a panel on the chair, read its position in that diagram's $2\times2$
grid. A left or bottom square has centre coordinate $1/2$ along the corresponding
axis; a right or top square has centre coordinate $3/2$. The remaining coordinate
is $0$ for a negative outward normal and $2$ for a positive one, except for the
three notch panels: panel~$4$ lies at $x=1$, panel~$14$ at $y=1$, and panel~$23$
at $z=1$. This locates every panel and fixes the in-panel positions of all eight features.
In particular, the square in the upper right of each positive-normal diagram is
an inset notch panel, not a face at the missing outer corner.

\begin{figure}[htbp]
  \centering
  \resizebox{\linewidth}{!}{\input{generated/fig_carrier_panels}}
  \caption{Complete panel recipe for Chair44, with the original panel IDs and signed
  heights. In each grid the first named axis increases rightwards and the second
  upwards. The large central number is the panel ID; the eight small signed numbers
  are feature coefficients. Panels $4,14,23$ are the three inset faces of the notch.
  Positive coefficients denote bumps and negative coefficients denote dents.
  The panels are laid out in global coordinates; the diagrams are not exterior views.}
  \label{fig:carrier}
\end{figure}

You can make a marked paper model by copying these signed numbers onto the
corresponding squares of a seven-cube chair, keeping the indicated coordinate
orientation. That model is a guide to the shape. The actual tile has no printed
labels or imposed matching rules: the pyramids themselves determine what can fit.
When two panels meet face to face, their outward normals point in opposite
directions. Compare the features at the points of space where they coincide:
at each such point a bump must meet a dent of the same magnitude.

\subsection{From the labels to the solid}

% ledger: O1, O6, O9, O12, R1; source solid/r44_solid.json and native_panels.csv; Lean solid_mesh_exact
All panels use the same eight feature positions. Relative to the panel centre,
in the order of the two axes named in the diagram, these are
\[
  \bigl(\pm\tfrac18,\pm\tfrac14\bigr),\qquad
  \bigl(\pm\tfrac14,\pm\tfrac18\bigr).
\]
Each feature has an axis-parallel square base of side $1/50$ (half-width
$\eta=1/100$). Its signed height along the outward normal is $a/10000$, where
$a$ is the small signed number at that position. The rest of the panel stays flat.
These instructions specify all $192$ features without requiring a mesh file.

\emph{For example, read panel~$13$.} It is the upper-right square of the $-y$
diagram, so its centre is $(3/2,0,3/2)$ and its in-panel axes are $x,z$.
The coefficient at offset $(-1/8,-1/4)$ is $-9$. Its base centre is therefore
$(11/8,0,5/4)$, and its apex is $(11/8,9/10000,5/4)$: since the outward normal
is $-y$, the negative sign cuts a dent into the solid, towards positive $y$.
The same panel's $+9$ at offset $(-1/4,-1/8)$ points outwards instead.
\Cref{fig:panel} shows both the plan and oblique views of this panel.

\begin{figure}[ht]
  \centering
  \includegraphics[width=\linewidth]{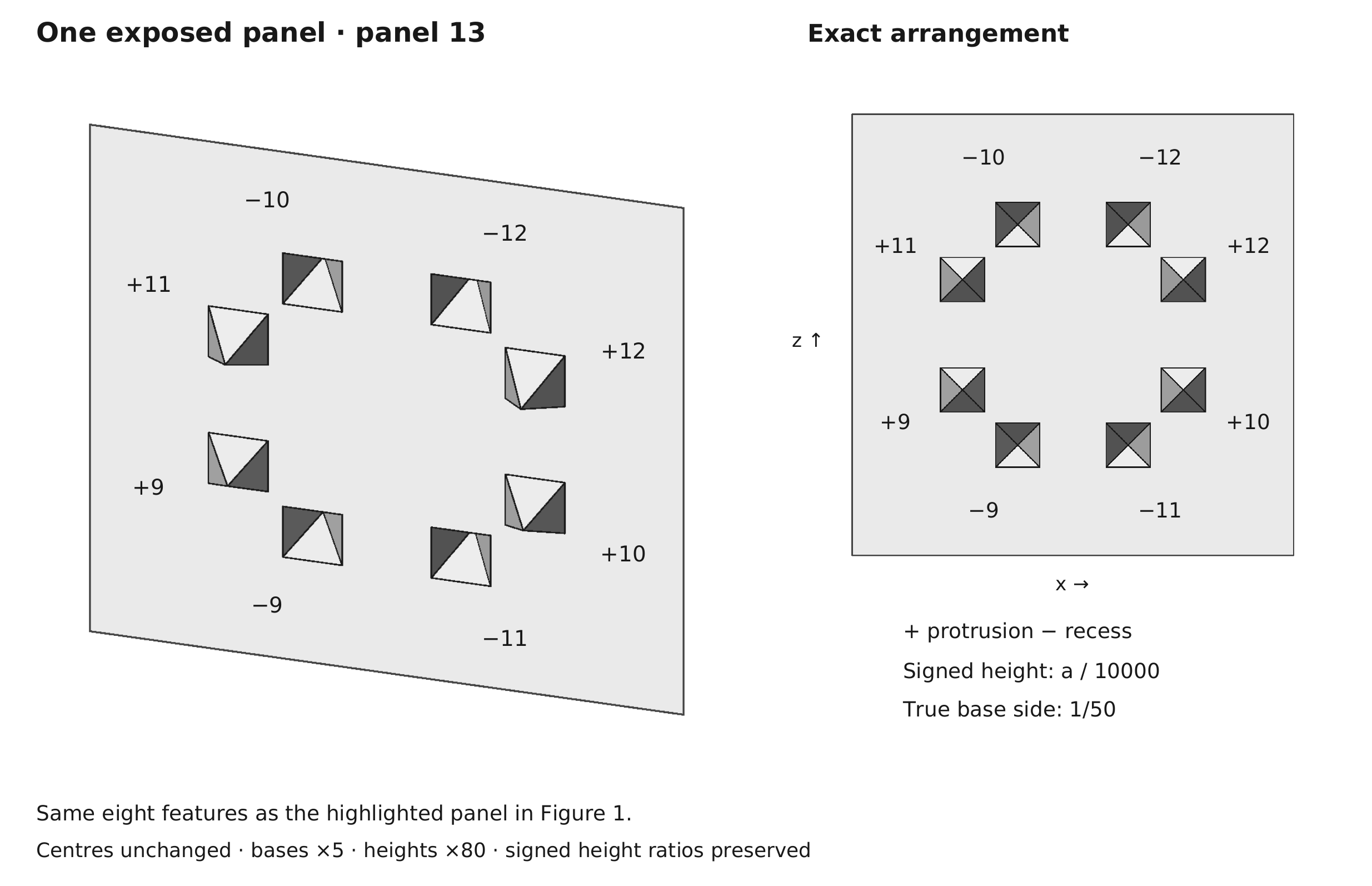}
  \caption{Panel~$13$, the panel outlined in \cref{fig:tile}, with its eight
  square-pyramid features. The outward normal is $-y$ and the plan view uses
  global axes $x,z$. Labels are the same signed coefficients as in
  \cref{fig:carrier}. For visibility, bases are enlarged $\times5$ and signed
  heights $\times80$ about their unchanged centres; the true base side is
  $1/50$ and the true signed height is $a/10000$.}
  \label{fig:panel}
\end{figure}

\subsection{Why these features?}

% ledger: R1; Lean contact_closure_30, profile_canonical
The bare chair can tile periodically. The features distinguish arrangements that
its flat panels alone cannot distinguish. Their heights were chosen so that the
contacts needed by the eight-child construction of \cref{sec:substitution} fit,
including the contacts created when that construction is repeated. Each required
pair of coincident features imposes an equation $a_u=-a_v$. The resulting equations
leave twelve independent signed groups; assigning them the distinct magnitudes
$1,\ldots,12$ gives the height table in \cref{fig:carrier}. The exact reconstruction
of that table is in \cref{app:profile}. Proving that \emph{every} tiling follows
the intended hierarchy requires the registration and recognition arguments in
\cref{sec:registration,sec:hierarchy}; the panel recipe alone does not prove it.

% ledger: O2, O3, O4, O5, O6, O7, O9, A16, T14; supporting geometric checks retained in appE_geometry.tex
The numerical dimensions make the features small and separate while retaining
the distinct angles used to force registration. Their purpose in the proof is
explained in \cref{app:angles,app:tubes}; readers following the construction can
keep the single height rule $a/10000$ in mind. The resulting solid has rational
vertices, volume $7$ and the topology of a closed $3$-ball, and no nonidentity
self-isometry. Its volume is unchanged because the bump and dent volumes cancel.
The topology, asymmetry and mesh checks are recorded in \cref{app:solidchecks}.
The parameters can also vary within the stated open conditions
(\cref{app:family}); the explicit solid described here uses the fixed values above.

%% file: generated/fig_carrier_panels.tex
% generated by paper/scripts/carrier_panels.py from solid/native_panels.csv
\begin{tikzpicture}[x=1cm,y=1cm,font=\scriptsize]
  \node[anchor=south west,font=\small] at (0.0,3.4499999999999997) {normal $-x$; axes $y,z$};
  \draw[gray!50] (0.0,0.0) grid[step=1.7] (3.4,3.4);
  \draw[thick] (0.0,0.0) rectangle (1.7,1.7);
  \node[font=\footnotesize\bfseries] at (0.85,0.85) {0};
  \node[font=\tiny,text=red!70!black] at (0.6375,0.425) {$+1$};
  \node[font=\tiny,text=red!70!black] at (0.6375,1.275) {$+2$};
  \node[font=\tiny,text=red!70!black] at (1.0625,0.425) {$+3$};
  \node[font=\tiny,text=red!70!black] at (1.0625,1.275) {$+4$};
  \node[font=\tiny,text=red!70!black] at (0.425,0.6375) {$+5$};
  \node[font=\tiny,text=red!70!black] at (0.425,1.0625) {$+6$};
  \node[font=\tiny,text=red!70!black] at (1.275,0.6375) {$+7$};
  \node[font=\tiny,text=red!70!black] at (1.275,1.0625) {$+8$};
  \draw[thick] (0.0,1.7) rectangle (1.7,3.4);
  \node[font=\footnotesize\bfseries] at (0.85,2.55) {1};
  \node[font=\tiny,text=blue!70!black] at (0.6375,2.125) {$-2$};
  \node[font=\tiny,text=blue!70!black] at (0.6375,2.9749999999999996) {$-1$};
  \node[font=\tiny,text=blue!70!black] at (1.0625,2.125) {$-4$};
  \node[font=\tiny,text=blue!70!black] at (1.0625,2.9749999999999996) {$-3$};
  \node[font=\tiny,text=blue!70!black] at (0.425,2.3375) {$-6$};
  \node[font=\tiny,text=blue!70!black] at (0.425,2.7625) {$-5$};
  \node[font=\tiny,text=blue!70!black] at (1.275,2.3375) {$-8$};
  \node[font=\tiny,text=blue!70!black] at (1.275,2.7625) {$-7$};
  \draw[thick] (1.7,0.0) rectangle (3.4,1.7);
  \node[font=\footnotesize\bfseries] at (2.55,0.85) {2};
  \node[font=\tiny,text=red!70!black] at (2.3375,0.425) {$+6$};
  \node[font=\tiny,text=red!70!black] at (2.3375,1.275) {$+8$};
  \node[font=\tiny,text=red!70!black] at (2.7625,0.425) {$+5$};
  \node[font=\tiny,text=red!70!black] at (2.7625,1.275) {$+7$};
  \node[font=\tiny,text=red!70!black] at (2.125,0.6375) {$+2$};
  \node[font=\tiny,text=red!70!black] at (2.125,1.0625) {$+4$};
  \node[font=\tiny,text=red!70!black] at (2.9749999999999996,0.6375) {$+1$};
  \node[font=\tiny,text=red!70!black] at (2.9749999999999996,1.0625) {$+3$};
  \draw[thick] (1.7,1.7) rectangle (3.4,3.4);
  \node[font=\footnotesize\bfseries] at (2.55,2.55) {3};
  \node[font=\tiny,text=red!70!black] at (2.3375,2.125) {$+9$};
  \node[font=\tiny,text=red!70!black] at (2.3375,2.9749999999999996) {$+10$};
  \node[font=\tiny,text=red!70!black] at (2.7625,2.125) {$+11$};
  \node[font=\tiny,text=red!70!black] at (2.7625,2.9749999999999996) {$+12$};
  \node[font=\tiny,text=blue!70!black] at (2.125,2.3375) {$-9$};
  \node[font=\tiny,text=blue!70!black] at (2.125,2.7625) {$-11$};
  \node[font=\tiny,text=blue!70!black] at (2.9749999999999996,2.3375) {$-10$};
  \node[font=\tiny,text=blue!70!black] at (2.9749999999999996,2.7625) {$-12$};
  \node[anchor=south west,font=\small] at (4.3,3.4499999999999997) {normal $+x$; axes $y,z$};
  \draw[gray!50] (4.3,0.0) grid[step=1.7] (7.699999999999999,3.4);
  \draw[thick] (6.0,1.7) rectangle (7.7,3.4);
  \node[font=\footnotesize\bfseries] at (6.85,2.55) {4};
  \node[font=\tiny,text=blue!70!black] at (6.6375,2.125) {$-1$};
  \node[font=\tiny,text=blue!70!black] at (6.6375,2.9749999999999996) {$-2$};
  \node[font=\tiny,text=blue!70!black] at (7.0625,2.125) {$-3$};
  \node[font=\tiny,text=blue!70!black] at (7.0625,2.9749999999999996) {$-4$};
  \node[font=\tiny,text=blue!70!black] at (6.425,2.3375) {$-5$};
  \node[font=\tiny,text=blue!70!black] at (6.425,2.7625) {$-6$};
  \node[font=\tiny,text=blue!70!black] at (7.275,2.3375) {$-7$};
  \node[font=\tiny,text=blue!70!black] at (7.275,2.7625) {$-8$};
  \draw[thick] (4.3,0.0) rectangle (6.0,1.7);
  \node[font=\footnotesize\bfseries] at (5.1499999999999995,0.85) {5};
  \node[font=\tiny,text=blue!70!black] at (4.9375,0.425) {$-12$};
  \node[font=\tiny,text=blue!70!black] at (4.9375,1.275) {$-11$};
  \node[font=\tiny,text=blue!70!black] at (5.3625,0.425) {$-10$};
  \node[font=\tiny,text=blue!70!black] at (5.3625,1.275) {$-9$};
  \node[font=\tiny,text=red!70!black] at (4.725,0.6375) {$+12$};
  \node[font=\tiny,text=red!70!black] at (4.725,1.0625) {$+10$};
  \node[font=\tiny,text=red!70!black] at (5.574999999999999,0.6375) {$+11$};
  \node[font=\tiny,text=red!70!black] at (5.574999999999999,1.0625) {$+9$};
  \draw[thick] (4.3,1.7) rectangle (6.0,3.4);
  \node[font=\footnotesize\bfseries] at (5.1499999999999995,2.55) {6};
  \node[font=\tiny,text=blue!70!black] at (4.9375,2.125) {$-7$};
  \node[font=\tiny,text=blue!70!black] at (4.9375,2.9749999999999996) {$-5$};
  \node[font=\tiny,text=blue!70!black] at (5.3625,2.125) {$-8$};
  \node[font=\tiny,text=blue!70!black] at (5.3625,2.9749999999999996) {$-6$};
  \node[font=\tiny,text=blue!70!black] at (4.725,2.3375) {$-3$};
  \node[font=\tiny,text=blue!70!black] at (4.725,2.7625) {$-1$};
  \node[font=\tiny,text=blue!70!black] at (5.574999999999999,2.3375) {$-4$};
  \node[font=\tiny,text=blue!70!black] at (5.574999999999999,2.7625) {$-2$};
  \draw[thick] (6.0,0.0) rectangle (7.7,1.7);
  \node[font=\footnotesize\bfseries] at (6.85,0.85) {7};
  \node[font=\tiny,text=red!70!black] at (6.6375,0.425) {$+3$};
  \node[font=\tiny,text=red!70!black] at (6.6375,1.275) {$+4$};
  \node[font=\tiny,text=red!70!black] at (7.0625,0.425) {$+1$};
  \node[font=\tiny,text=red!70!black] at (7.0625,1.275) {$+2$};
  \node[font=\tiny,text=red!70!black] at (6.425,0.6375) {$+7$};
  \node[font=\tiny,text=red!70!black] at (6.425,1.0625) {$+8$};
  \node[font=\tiny,text=red!70!black] at (7.275,0.6375) {$+5$};
  \node[font=\tiny,text=red!70!black] at (7.275,1.0625) {$+6$};
  \node[anchor=south west,font=\small] at (8.6,3.4499999999999997) {normal $-y$; axes $x,z$};
  \draw[gray!50] (8.6,0.0) grid[step=1.7] (12.0,3.4);
  \draw[thick] (8.6,0.0) rectangle (10.299999999999999,1.7);
  \node[font=\footnotesize\bfseries] at (9.45,0.85) {8};
  \node[font=\tiny,text=blue!70!black] at (9.237499999999999,0.425) {$-1$};
  \node[font=\tiny,text=blue!70!black] at (9.237499999999999,1.275) {$-2$};
  \node[font=\tiny,text=blue!70!black] at (9.6625,0.425) {$-3$};
  \node[font=\tiny,text=blue!70!black] at (9.6625,1.275) {$-4$};
  \node[font=\tiny,text=blue!70!black] at (9.025,0.6375) {$-5$};
  \node[font=\tiny,text=blue!70!black] at (9.025,1.0625) {$-6$};
  \node[font=\tiny,text=blue!70!black] at (9.875,0.6375) {$-7$};
  \node[font=\tiny,text=blue!70!black] at (9.875,1.0625) {$-8$};
  \draw[thick] (8.6,1.7) rectangle (10.299999999999999,3.4);
  \node[font=\footnotesize\bfseries] at (9.45,2.55) {9};
  \node[font=\tiny,text=red!70!black] at (9.237499999999999,2.125) {$+2$};
  \node[font=\tiny,text=red!70!black] at (9.237499999999999,2.9749999999999996) {$+1$};
  \node[font=\tiny,text=red!70!black] at (9.6625,2.125) {$+4$};
  \node[font=\tiny,text=red!70!black] at (9.6625,2.9749999999999996) {$+3$};
  \node[font=\tiny,text=red!70!black] at (9.025,2.3375) {$+6$};
  \node[font=\tiny,text=red!70!black] at (9.025,2.7625) {$+5$};
  \node[font=\tiny,text=red!70!black] at (9.875,2.3375) {$+8$};
  \node[font=\tiny,text=red!70!black] at (9.875,2.7625) {$+7$};
  \draw[thick] (10.299999999999999,0.0) rectangle (11.999999999999998,1.7);
  \node[font=\footnotesize\bfseries] at (11.149999999999999,0.85) {12};
  \node[font=\tiny,text=blue!70!black] at (10.937499999999998,0.425) {$-6$};
  \node[font=\tiny,text=blue!70!black] at (10.937499999999998,1.275) {$-8$};
  \node[font=\tiny,text=blue!70!black] at (11.362499999999999,0.425) {$-5$};
  \node[font=\tiny,text=blue!70!black] at (11.362499999999999,1.275) {$-7$};
  \node[font=\tiny,text=blue!70!black] at (10.725,0.6375) {$-2$};
  \node[font=\tiny,text=blue!70!black] at (10.725,1.0625) {$-4$};
  \node[font=\tiny,text=blue!70!black] at (11.575,0.6375) {$-1$};
  \node[font=\tiny,text=blue!70!black] at (11.575,1.0625) {$-3$};
  \draw[thick] (10.299999999999999,1.7) rectangle (11.999999999999998,3.4);
  \node[font=\footnotesize\bfseries] at (11.149999999999999,2.55) {13};
  \node[font=\tiny,text=blue!70!black] at (10.937499999999998,2.125) {$-9$};
  \node[font=\tiny,text=blue!70!black] at (10.937499999999998,2.9749999999999996) {$-10$};
  \node[font=\tiny,text=blue!70!black] at (11.362499999999999,2.125) {$-11$};
  \node[font=\tiny,text=blue!70!black] at (11.362499999999999,2.9749999999999996) {$-12$};
  \node[font=\tiny,text=red!70!black] at (10.725,2.3375) {$+9$};
  \node[font=\tiny,text=red!70!black] at (10.725,2.7625) {$+11$};
  \node[font=\tiny,text=red!70!black] at (11.575,2.3375) {$+10$};
  \node[font=\tiny,text=red!70!black] at (11.575,2.7625) {$+12$};
  \node[anchor=south west,font=\small] at (0.0,-1.05) {normal $+y$; axes $x,z$};
  \draw[gray!50] (0.0,-4.5) grid[step=1.7] (3.4,-1.1);
  \draw[thick] (0.0,-4.5) rectangle (1.7,-2.8);
  \node[font=\footnotesize\bfseries] at (0.85,-3.65) {10};
  \node[font=\tiny,text=red!70!black] at (0.6375,-4.075) {$+12$};
  \node[font=\tiny,text=red!70!black] at (0.6375,-3.225) {$+11$};
  \node[font=\tiny,text=red!70!black] at (1.0625,-4.075) {$+10$};
  \node[font=\tiny,text=red!70!black] at (1.0625,-3.225) {$+9$};
  \node[font=\tiny,text=blue!70!black] at (0.425,-3.8625) {$-12$};
  \node[font=\tiny,text=blue!70!black] at (0.425,-3.4375) {$-10$};
  \node[font=\tiny,text=blue!70!black] at (1.275,-3.8625) {$-11$};
  \node[font=\tiny,text=blue!70!black] at (1.275,-3.4375) {$-9$};
  \draw[thick] (0.0,-2.8) rectangle (1.7,-1.0999999999999999);
  \node[font=\footnotesize\bfseries] at (0.85,-1.9499999999999997) {11};
  \node[font=\tiny,text=red!70!black] at (0.6375,-2.375) {$+7$};
  \node[font=\tiny,text=red!70!black] at (0.6375,-1.525) {$+5$};
  \node[font=\tiny,text=red!70!black] at (1.0625,-2.375) {$+8$};
  \node[font=\tiny,text=red!70!black] at (1.0625,-1.525) {$+6$};
  \node[font=\tiny,text=red!70!black] at (0.425,-2.1624999999999996) {$+3$};
  \node[font=\tiny,text=red!70!black] at (0.425,-1.7374999999999998) {$+1$};
  \node[font=\tiny,text=red!70!black] at (1.275,-2.1624999999999996) {$+4$};
  \node[font=\tiny,text=red!70!black] at (1.275,-1.7374999999999998) {$+2$};
  \draw[thick] (1.7,-2.8) rectangle (3.4,-1.0999999999999999);
  \node[font=\footnotesize\bfseries] at (2.55,-1.9499999999999997) {14};
  \node[font=\tiny,text=red!70!black] at (2.3375,-2.375) {$+1$};
  \node[font=\tiny,text=red!70!black] at (2.3375,-1.525) {$+2$};
  \node[font=\tiny,text=red!70!black] at (2.7625,-2.375) {$+3$};
  \node[font=\tiny,text=red!70!black] at (2.7625,-1.525) {$+4$};
  \node[font=\tiny,text=red!70!black] at (2.125,-2.1624999999999996) {$+5$};
  \node[font=\tiny,text=red!70!black] at (2.125,-1.7374999999999998) {$+6$};
  \node[font=\tiny,text=red!70!black] at (2.9749999999999996,-2.1624999999999996) {$+7$};
  \node[font=\tiny,text=red!70!black] at (2.9749999999999996,-1.7374999999999998) {$+8$};
  \draw[thick] (1.7,-4.5) rectangle (3.4,-2.8);
  \node[font=\footnotesize\bfseries] at (2.55,-3.65) {15};
  \node[font=\tiny,text=blue!70!black] at (2.3375,-4.075) {$-3$};
  \node[font=\tiny,text=blue!70!black] at (2.3375,-3.225) {$-4$};
  \node[font=\tiny,text=blue!70!black] at (2.7625,-4.075) {$-1$};
  \node[font=\tiny,text=blue!70!black] at (2.7625,-3.225) {$-2$};
  \node[font=\tiny,text=blue!70!black] at (2.125,-3.8625) {$-7$};
  \node[font=\tiny,text=blue!70!black] at (2.125,-3.4375) {$-8$};
  \node[font=\tiny,text=blue!70!black] at (2.9749999999999996,-3.8625) {$-5$};
  \node[font=\tiny,text=blue!70!black] at (2.9749999999999996,-3.4375) {$-6$};
  \node[anchor=south west,font=\small] at (4.3,-1.05) {normal $-z$; axes $x,y$};
  \draw[gray!50] (4.3,-4.5) grid[step=1.7] (7.699999999999999,-1.1);
  \draw[thick] (4.3,-4.5) rectangle (6.0,-2.8);
  \node[font=\footnotesize\bfseries] at (5.1499999999999995,-3.65) {16};
  \node[font=\tiny,text=red!70!black] at (4.9375,-4.075) {$+12$};
  \node[font=\tiny,text=red!70!black] at (4.9375,-3.225) {$+11$};
  \node[font=\tiny,text=red!70!black] at (5.3625,-4.075) {$+10$};
  \node[font=\tiny,text=red!70!black] at (5.3625,-3.225) {$+9$};
  \node[font=\tiny,text=blue!70!black] at (4.725,-3.8625) {$-12$};
  \node[font=\tiny,text=blue!70!black] at (4.725,-3.4375) {$-10$};
  \node[font=\tiny,text=blue!70!black] at (5.574999999999999,-3.8625) {$-11$};
  \node[font=\tiny,text=blue!70!black] at (5.574999999999999,-3.4375) {$-9$};
  \draw[thick] (4.3,-2.8) rectangle (6.0,-1.0999999999999999);
  \node[font=\footnotesize\bfseries] at (5.1499999999999995,-1.9499999999999997) {18};
  \node[font=\tiny,text=blue!70!black] at (4.9375,-2.375) {$-2$};
  \node[font=\tiny,text=blue!70!black] at (4.9375,-1.525) {$-1$};
  \node[font=\tiny,text=blue!70!black] at (5.3625,-2.375) {$-4$};
  \node[font=\tiny,text=blue!70!black] at (5.3625,-1.525) {$-3$};
  \node[font=\tiny,text=blue!70!black] at (4.725,-2.1624999999999996) {$-6$};
  \node[font=\tiny,text=blue!70!black] at (4.725,-1.7374999999999998) {$-5$};
  \node[font=\tiny,text=blue!70!black] at (5.574999999999999,-2.1624999999999996) {$-8$};
  \node[font=\tiny,text=blue!70!black] at (5.574999999999999,-1.7374999999999998) {$-7$};
  \draw[thick] (6.0,-4.5) rectangle (7.7,-2.8);
  \node[font=\footnotesize\bfseries] at (6.85,-3.65) {20};
  \node[font=\tiny,text=red!70!black] at (6.6375,-4.075) {$+6$};
  \node[font=\tiny,text=red!70!black] at (6.6375,-3.225) {$+8$};
  \node[font=\tiny,text=red!70!black] at (7.0625,-4.075) {$+5$};
  \node[font=\tiny,text=red!70!black] at (7.0625,-3.225) {$+7$};
  \node[font=\tiny,text=red!70!black] at (6.425,-3.8625) {$+2$};
  \node[font=\tiny,text=red!70!black] at (6.425,-3.4375) {$+4$};
  \node[font=\tiny,text=red!70!black] at (7.275,-3.8625) {$+1$};
  \node[font=\tiny,text=red!70!black] at (7.275,-3.4375) {$+3$};
  \draw[thick] (6.0,-2.8) rectangle (7.7,-1.0999999999999999);
  \node[font=\footnotesize\bfseries] at (6.85,-1.9499999999999997) {22};
  \node[font=\tiny,text=red!70!black] at (6.6375,-2.375) {$+9$};
  \node[font=\tiny,text=red!70!black] at (6.6375,-1.525) {$+10$};
  \node[font=\tiny,text=red!70!black] at (7.0625,-2.375) {$+11$};
  \node[font=\tiny,text=red!70!black] at (7.0625,-1.525) {$+12$};
  \node[font=\tiny,text=blue!70!black] at (6.425,-2.1624999999999996) {$-9$};
  \node[font=\tiny,text=blue!70!black] at (6.425,-1.7374999999999998) {$-11$};
  \node[font=\tiny,text=blue!70!black] at (7.275,-2.1624999999999996) {$-10$};
  \node[font=\tiny,text=blue!70!black] at (7.275,-1.7374999999999998) {$-12$};
  \node[anchor=south west,font=\small] at (8.6,-1.05) {normal $+z$; axes $x,y$};
  \draw[gray!50] (8.6,-4.5) grid[step=1.7] (12.0,-1.1);
  \draw[thick] (8.6,-4.5) rectangle (10.299999999999999,-2.8);
  \node[font=\footnotesize\bfseries] at (9.45,-3.65) {17};
  \node[font=\tiny,text=blue!70!black] at (9.237499999999999,-4.075) {$-12$};
  \node[font=\tiny,text=blue!70!black] at (9.237499999999999,-3.225) {$-11$};
  \node[font=\tiny,text=blue!70!black] at (9.6625,-4.075) {$-10$};
  \node[font=\tiny,text=blue!70!black] at (9.6625,-3.225) {$-9$};
  \node[font=\tiny,text=red!70!black] at (9.025,-3.8625) {$+12$};
  \node[font=\tiny,text=red!70!black] at (9.025,-3.4375) {$+10$};
  \node[font=\tiny,text=red!70!black] at (9.875,-3.8625) {$+11$};
  \node[font=\tiny,text=red!70!black] at (9.875,-3.4375) {$+9$};
  \draw[thick] (8.6,-2.8) rectangle (10.299999999999999,-1.0999999999999999);
  \node[font=\footnotesize\bfseries] at (9.45,-1.9499999999999997) {19};
  \node[font=\tiny,text=blue!70!black] at (9.237499999999999,-2.375) {$-7$};
  \node[font=\tiny,text=blue!70!black] at (9.237499999999999,-1.525) {$-5$};
  \node[font=\tiny,text=blue!70!black] at (9.6625,-2.375) {$-8$};
  \node[font=\tiny,text=blue!70!black] at (9.6625,-1.525) {$-6$};
  \node[font=\tiny,text=blue!70!black] at (9.025,-2.1624999999999996) {$-3$};
  \node[font=\tiny,text=blue!70!black] at (9.025,-1.7374999999999998) {$-1$};
  \node[font=\tiny,text=blue!70!black] at (9.875,-2.1624999999999996) {$-4$};
  \node[font=\tiny,text=blue!70!black] at (9.875,-1.7374999999999998) {$-2$};
  \draw[thick] (10.299999999999999,-4.5) rectangle (11.999999999999998,-2.8);
  \node[font=\footnotesize\bfseries] at (11.149999999999999,-3.65) {21};
  \node[font=\tiny,text=red!70!black] at (10.937499999999998,-4.075) {$+3$};
  \node[font=\tiny,text=red!70!black] at (10.937499999999998,-3.225) {$+4$};
  \node[font=\tiny,text=red!70!black] at (11.362499999999999,-4.075) {$+1$};
  \node[font=\tiny,text=red!70!black] at (11.362499999999999,-3.225) {$+2$};
  \node[font=\tiny,text=red!70!black] at (10.725,-3.8625) {$+7$};
  \node[font=\tiny,text=red!70!black] at (10.725,-3.4375) {$+8$};
  \node[font=\tiny,text=red!70!black] at (11.575,-3.8625) {$+5$};
  \node[font=\tiny,text=red!70!black] at (11.575,-3.4375) {$+6$};
  \draw[thick] (10.299999999999999,-2.8) rectangle (11.999999999999998,-1.0999999999999999);
  \node[font=\footnotesize\bfseries] at (11.149999999999999,-1.9499999999999997) {23};
  \node[font=\tiny,text=blue!70!black] at (10.937499999999998,-2.375) {$-12$};
  \node[font=\tiny,text=blue!70!black] at (10.937499999999998,-1.525) {$-11$};
  \node[font=\tiny,text=blue!70!black] at (11.362499999999999,-2.375) {$-10$};
  \node[font=\tiny,text=blue!70!black] at (11.362499999999999,-1.525) {$-9$};
  \node[font=\tiny,text=red!70!black] at (10.725,-2.1624999999999996) {$+12$};
  \node[font=\tiny,text=red!70!black] at (10.725,-1.7374999999999998) {$+10$};
  \node[font=\tiny,text=red!70!black] at (11.575,-2.1624999999999996) {$+11$};
  \node[font=\tiny,text=red!70!black] at (11.575,-1.7374999999999998) {$+9$};
\end{tikzpicture}

%% file: sec3_finding.tex
\section{Finding \texorpdfstring{$\solid$}{Q}}\label{sec:finding}

\subsection{The substitution}\label{sec:substitution}

% ledger: O8, D3; source PROOFS_registered.md R§1; Lean children_partition_2P (T1)
The chair is a rep-tile: eight copies of $\carrier$ form the doubled chair $2\carrier$. Write a
\emph{frame} as a pair $(p,s)$ of a permutation of the coordinate axes and a sign vector, acting
by $G(x)_i=s_i x_{p_i}$; a \emph{pose} is a frame followed by a translation. The eight child poses
are listed in \cref{tab:children} and drawn in \cref{fig:cluster}. Every child frame is a proper
rotation, and the $56$ unit cubes of the eight children partition $2\carrier$ exactly, as sets of
integer cells (a Lean theorem by kernel \lean{decide}). Refinement acts on poses by
\[
  (G,t)\ \longmapsto\ \{(GH,\,2t+Gu) : (H,u)\ \text{a child pose}\},
\]
Thus refinement replaces the doubled support of a chair by eight unit-size chairs; iteration from
the native chair produces patches of $8$, $64$, $512,\dots$ chairs (\cref{fig:patch}).

\begin{table}[ht]
  \centering
  \caption{The eight child poses of the doubled chair (permutation $p$, signs $s$, translation
  $u$, in the coordinates of $2\carrier$). Every frame is a proper rotation.}
  \label{tab:children}
  \begin{tabular}{llll}
    \toprule
    child & $p$ & $s$ & $u$ \\
    \midrule
    000 & $(0,1,2)$ & $(+,+,+)$ & $(0,0,0)$ \\
    001 & $(1,0,2)$ & $(+,+,-)$ & $(0,0,4)$ \\
    010 & $(0,2,1)$ & $(+,-,+)$ & $(0,4,0)$ \\
    011 & $(2,0,1)$ & $(+,-,-)$ & $(0,4,4)$ \\
    100 & $(2,1,0)$ & $(-,+,+)$ & $(4,0,0)$ \\
    101 & $(1,2,0)$ & $(-,+,-)$ & $(4,0,4)$ \\
    110 & $(0,1,2)$ & $(-,-,+)$ & $(4,4,0)$ \\
    central & $(0,1,2)$ & $(+,+,+)$ & $(1,1,1)$ \\
    \bottomrule
  \end{tabular}
\end{table}

\begin{figure}[ht]
  \centering
  \includegraphics[width=0.52\linewidth]{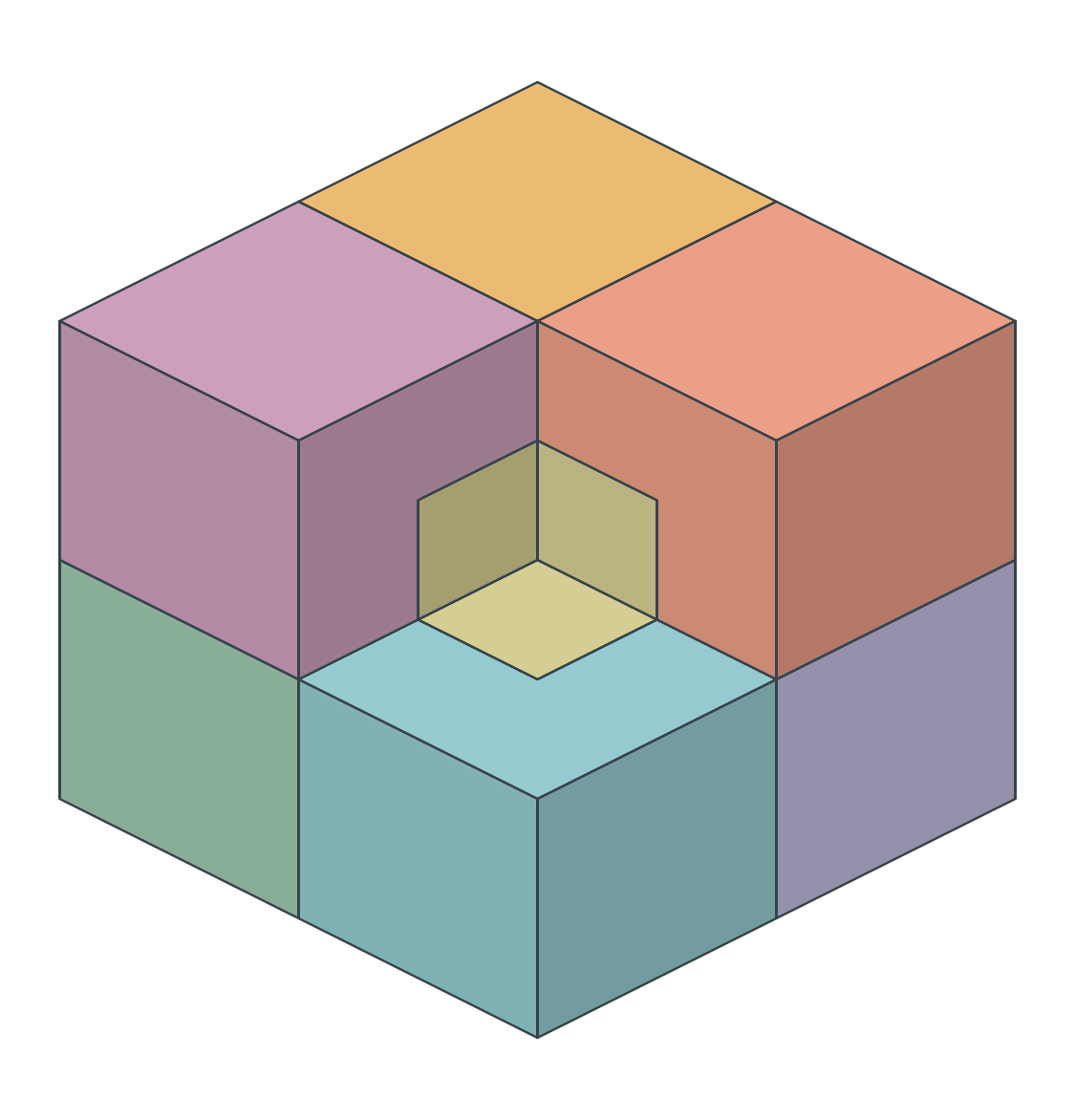}
  \caption{The eight-child dissection: eight properly rotated chair carriers partition
  $2\carrier$. Colours distinguish children; the small boundary features are omitted.}
  \label{fig:cluster}
\end{figure}

\begin{figure}[ht]
  \centering
  \includegraphics[width=0.52\linewidth]{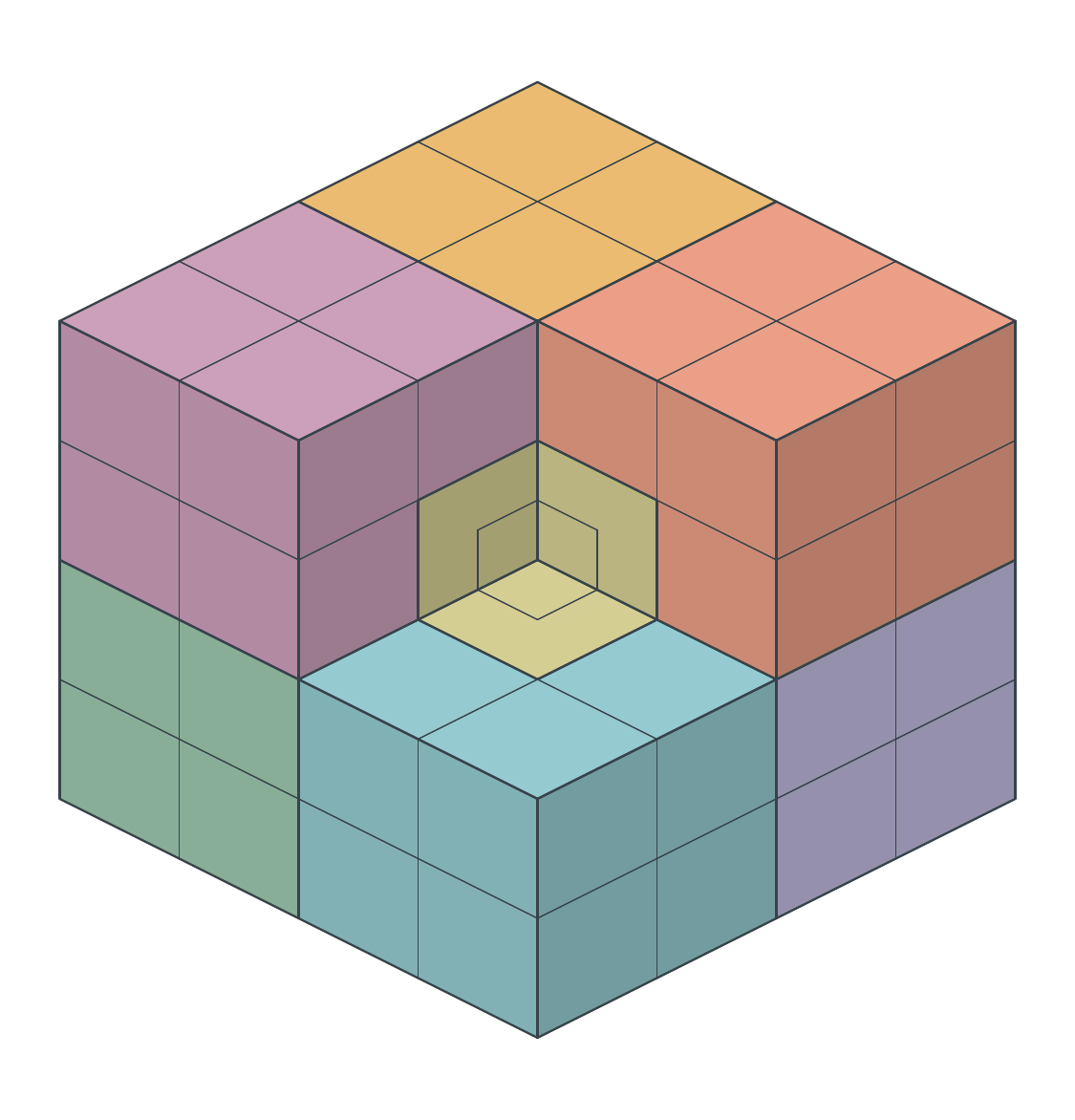}
  \caption{Two refinements: $64$ chair carriers partition $4\carrier$, coloured by their eight
  parents in the same view as \cref{fig:cluster}. Thin lines separate children; thicker lines
  mark parent boundaries and carrier creases. Boundary features are omitted; colour is annotation only.}
  \label{fig:patch}
\end{figure}

% ledger: R2, R3, D3, Q22; Lean atlas_44 (T1n), orientation_group_24 (T1n); source 1003.4909 L48 (Fletcher's atlas); 2303.10798 L96
Call a pose \emph{registered} if its frame is one of the $24$ proper cubic rotations and its
translation is an integer vector, and call a tiling \emph{registered} if, after one ambient
isometry, every tile is in a registered pose. In a registered tiling a chair with its pose
normalized to the identity has $22$ grid cells adjacent to it across its $24$ panels, and a
neighbouring chair covering one of them can sit in any of the $48$ signed frames; subtracting
its seven cells from each shell cell and rejecting overlaps of the two chairs leaves $2{,}388$
candidate face-neighbour poses. The features decide which of them are legal: two pyramids fit
only as a protrusion into a recess of the same magnitude, and because every one of the twelve
magnitudes is used by exactly one signed component, equality of the two profiles across a
shared panel is checked exactly, centre by centre. Exactly $44$ of the $2{,}388$ poses are legal
(a Lean theorem using \lean{native\_decide}). We call this set the \emph{$44$-contact atlas}
$\atlas$ (\cref{fig:atlas}). Its $44$ poses use $19$ distinct relative rotations, and those $19$
generate the whole $24$-element proper cubic group (\lean{native\_decide}). The word ``atlas'' is
Fletcher's, who uses it for a matching rule imposed on a cubic prototile \cite{Fle10}; here it
names a theorem about what the shape permits, and no rule is imposed. To see the substitution
and the atlas in action, the repository includes an interactive browser-based viewer of the solid
and its patches.

\begin{figure}[ht]
  \centering
  {\footnotesize\input{generated/atlas_44_table}}
  \caption{The $44$-contact atlas $\atlas$.}
  \label{fig:atlas}
\end{figure}

\subsection{The design test in finite form}\label{sec:designtest}

% ledger: H5, R7, R3; source PROOFS_registered.md R§8; Lean parent_atlas_eq_fine (T1n)
The test of \cref{sec:reading} asks whether the decoded parents obey the tile's own rule. In a
registered tiling whose chairs have been grouped into doubled chairs, a legal contact between two
fine chairs together with a choice of which child each is in its parent determines the relative
pose of the two parents. Enumerating the $8\times44\times8$ possibilities and removing duplicates
gives $697$ candidate parent contacts; $116$ of them have disjoint parent bodies; and testing every
fine contact across the two parents leaves $44$ legal parent contacts, every one with an even
translation. Dividing the translations by two gives \emph{exactly} the fine atlas $\atlas$, not
merely a set of the same size:
\[
  \tfrac12\,(\text{admitted parent atlas}) \;=\; \atlas
\]
(a Lean theorem using \lean{native\_decide}). This is the identity the whole construction was
designed to satisfy. Together with unique parent recognition and small-collar realization, this
identity permits coarsening (\cref{sec:hierarchy}): divide parent positions and lengths by two
and replace each parent carrier by a fresh copy of $\solid$. Translation covariance sends a period
$p$ to $p/2$, so \cref{sec:aperiodicity}'s halving argument applies at every level. The fact that
the $19$ contact
rotations generate the full proper cubic group, rather than a subgroup, is what makes the parent
frames the same set of frames as the child frames.

\subsection{How the tile was found}\label{sec:found}

% ledger: H7, H8, H9, H14; sources LINEAGE.md; history/cascade/findings.md L40-L48; packets P3-P6 (provenance/packets/); provenance/HASH_CHAIN.md
This section records how the solid was found; nothing in it is used in the proofs. Three
computed steps led to $\solid$, each recorded in a public artifact of the repository.

% ledger: H7; source history/cascade/findings.md L40-L48 (step 150); LINEAGE.md item 1
\emph{Step 1: a marked chair.} The construction record of the repository (steps $140$--$165$)
introduces the seven-cube chair as a rep-8 rep-tile and proves that a single-marking local rule
on the chairs, read off a radius-one box, admits the chair substitution; a SAT solver finds no
periodic tiling satisfying the rule up to period $L=7$. The record calls the all-scale recursion
an ``honest gap'': this is a marked candidate with strong finite evidence, not a proved marked
einstein. Attempts to realize the marks as notches on the chair failed; the verdict at the time
was that the rule is too complex for fixed covariant notches.

% ledger: H8; sources P3 README.md; P4 PROOFS.md L104-L139, L240-L258; P4 README.md
\emph{Step 2: a bare solid that is a periodic control.} The first bare solid put $192$ square
pyramids with signed heights on the $24$ panels, in fixed child frames. It admits an all-scale
aperiodic hierarchy, and it also admits a legal eight-copy periodic tiling. Decoding its tilings
into parent tilings explained why: the parents obey a weaker rule than the tiles, $62$ legal fine
contacts decoding to $398$ parent contacts, so the decoded language is the language of all
bare-chair tilings. The packet further proves that a deterministic translation-covariant recoding
with uniformly bounded neighbourhood and displacement, if it realizes the specified hierarchical
tiling by one compact solid, also realizes a periodic tiling. Requirement two of
\cref{sec:reading} fails, not requirement three: period preservation holds exactly,
$\Per(T)=2\Per(D(T))$ for the decoding $D$; what fails is preservation of the solid's
admissibility rule, since the decoded parents obey the less restrictive bare-chair language.

% ledger: H9, H14; sources P5 PROOFS.md "Claim and evidence status"; LINEAGE.md; provenance/HASH_CHAIN.md; Lean parent_atlas_eq_fine, orientation_group_24 (T1n)
\emph{Step 3: the frame change.} Replacing the fixed child frames by the proper-rotation frames of
\cref{tab:children}, which generate the full $24$-element cubic group, and recomputing the
profile from the closed contact set ($21\to30$ contacts) produced $\solid$, with $44$ legal
contacts and a parent atlas that halves to the same $44$. The decisive finite distinction for the
halving test is equality of the decoded parent atlas with the fine atlas, $44=44$ against
$62\to398$. The unrestricted alignment proof of \cref{sec:companions,sec:registration}, that every
tiling, not only registered ones, is forced onto the lattice, was written last. These stages are
recorded in packets P3--P6 of the repository's six-packet hash chain, each packet naming its
predecessor by hash and replayed and re-verified here; P6 strengthens the unrestricted alignment
argument.

% ledger: H2, H4, H6, H11, H10; Q-G author's account (labelled; carries no claim); framework named once with citations
\emph{The reading behind the steps.} What follows is the author's account of the route, in the
sense in which the hat paper recounts its own discovery, and it carries no claim. In the author's
recollection, the tile was found by asking what an aperiodic monotile is through the Six Birds
framework, an emergence framework developed by the author \cite{Tsi26F6,Tsi26F4,Tsi26NDO}, and
then having an AI reasoning model that was given the framework design a solid to satisfy that
description (see the disclosure after the acknowledgements); the packets that produced $\solid$
name the framework as their structural
compass and state their diagnoses of the control in its terms. The framework's statement of the
phenomenon is its law G11, ``Local-Rule Global-Anti-Symmetry'' \cite{Tsi26F6}, stated for a finite
local constraint system $C$ over configurations $\Z^d\to A$:

\begin{quote}\small
Assume: [H-G11-hierarchy] every globally admissible $C$-configuration carries the declared
hierarchy uniquely at every scale; the hierarchy is locally forced by the rules of $C$; and for
every nonzero period vector $p$ some scale $k$ has forced marker or block structure that cannot be
invariant under translation by $p$; [H-G11-nonempty] at least one global configuration satisfies
all local rules of $C$. Then admissible configurations exist; for every nonzero $p$ the hierarchy
supplies a scale whose forced structure is incompatible with $p$-periodicity; because the forcing
is local, the incompatibility is witnessed on a finite region $R_p$, yielding a per-period defect
certificate; and therefore every admissible configuration is aperiodic.
\end{quote}

\noindent The source lists Berger, Robinson, Penrose, the hat and the Spectre as its instances and
reads its own content as ``P2 constraint gating, P4 hierarchy across forced scales, and P1 descent
obstruction per candidate period''. For $\solid$ the instance is the registered system $C_{44}$:
labellings of $\Z^3$ by the $168$ registered cell labels (a chair in one of $24$ frames at one of
seven cells) obeying the atlas $\atlas$. Registration carries tilings of $\solid$ to $C_{44}$
(\cref{sec:registration}, resting on written lemmas) and the small-collar realization carries
registered baseline tilings back (\cref{sec:existence}, a written lemma); we do not claim a
packaged theorem that the two are one object. Clause by clause: [H-G11-nonempty] is
\cref{thm:full}(1); uniqueness at every scale is \cref{thm:full}(4); local forcing is the
unique-parent theorem of \cref{sec:hierarchy} applied level by level, the same finite rule
determining the parent from a neighbourhood whose radius grows with the level, which is not local
derivability from the unmarked chair tiling; and the period clause is reached not by exhibiting
G11's per-period certificates $R_p$, which our proof does not extract, but by the halving tower
of \cref{sec:aperiodicity}. At their verification levels, all four clauses are Lean theorems of
the pinned development, kernel-checked modulo the named compiler hooks: existence
(\lean{existence}); the hierarchy, with uniqueness among arbitrary geometrically nested registered
families (\lean{geometric\_hierarchy\_canonical}, \lean{geometric\_hierarchy\_unique}); local
forcing, from the finite shell and parent-atlas checks via \lean{native\_decide} together with the
proved local-to-global assembly and the proved registration bridge; and period exclusion, by the
Lean halving tower. The design test of \cref{sec:designtest} is, in the framework's terms,
its sufficiency and composability gate (laws F7 and F28 of \cite{Tsi26F4}). That reading
of the laws is the packets' own; F7 concerns factorization of declared readouts, whereas
preservation of legal tilings under coarsening is the separate geometric theorem of
\cref{sec:hierarchy}. The catalog contains no tiling instance. \Cref{sec:context} develops
this distinction as a complementary interpretation of the proved construction.

% ledger: H10, H6; source non-descending-objects paper (wording discipline)
The framework supplied a reading and a test. It proves nothing about $\solid$: existence is the
substitution's, universality is the companion argument's and the census's, and the framework's
own wording discipline says that its vocabulary certifies no claim. The monotile theorem
is certified by the proof and by the verification record of \cref{sec:computer},
and G11 is not a step of any proof here.

\subsection{Existence}\label{sec:existence}

% ledger: R10, T1, E2, E6, R8; source PROOFS_registered.md R§9; ERRATA E2; Lean nested_substitution_controls (T1n), existence (T1n); small_collar_realization_holds (T1n; formal proof identifies duplicate tube sites by centre)
\begin{theorem}[existence; R\S9, \lean{existence}, \tier{T1n}]\label{thm:existence}
$\solid$ admits a tiling of $\R^3$.
\end{theorem}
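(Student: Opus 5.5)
The plan has three stages: build a registered tiling of $\R^3$ by carriers from the substitution of \cref{sec:substitution}; check that every contact in it is in the atlas $\atlas$; then replace each carrier by a congruent copy of $\solid$ and show that the features fit together.

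\emph{Step 1 (carrier tiling).} Iterate the refinement map $(G,t)\mapsto\{(GH,2t+Gu)\}$. Since the central child has pose $(\mathrm{id},(1,1,1))$, the nested supports $2^n\carrier-\sum_{k<n}2^k(1,1,1)$ do not obviously exhaust space. I would instead use a compactness (K\"onig) argument, or choose a child at each level so that the origin stays deep inside. A cleaner option is to take a sequence of child choices whose supports contain balls of radius growing like $2^{n}$ about a fixed point; this works because each child occupies an interior region of $2\carrier$. The level-$n$ patches are nested, since the dissection of \cref{tab:children} partitions $2\carrier$ exactly. Their union is therefore a registered tiling of $\R^3$ by carriers in registered poses. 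It is locally finite, and interiors are disjoint because the $56$ unit cells partition.

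\emph{Step 2 (contacts are legal).} Every pair of face-adjacent chairs in this tiling lies in a common supertile of some level $n$. I would show, by induction on $n$, that every contact inside a level-$n$ patch is in $\atlas$. The base case is the contacts between the eight children, which must appear among the $44$ legal poses. For the inductive step, a contact between fine chairs lying in different parents is determined by the parents' relative pose and the children's choices. By induction the parents' relative pose is a halved contact in $\atlas$, since parents at level $n$ are chairs of a level-$(n-1)$ patch, scaled. The design-test identity $\tfrac12(\text{admitted parent atlas})=\atlas$ says exactly that every parent contact in $\atlas$ refines to fine contacts in $\atlas$. This closes the induction. It also covers contacts that are not face-to-face but meet along edges or vertices: such contacts are included in the shell enumeration, whose $22$ shell cells together with the overlap rejection cover every registered neighbour.

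\emph{Step 3 (small-collar realization, the main obstacle).} Put a copy of $\solid$ in each registered pose. On every shared panel the profiles match exactly, bump against dent of equal magnitude, by the definition of the atlas. So bodies that meet across panels have disjoint interiors and leave no gaps there. The delicate part is the regions near edges and corners of panels, and the places where several chairs meet. A feature near the boundary of one panel might poke into a third tile that is not the panel partner. I would handle this with a collar argument. Each feature's base lies at distance at least $1/8-1/100$ from the panel boundary and its height is at most $12/10000$. Hence each feature's support lies in the open tube over the interior of its own panel, of thickness below $1/100$. In a registered tiling that tube meets only the two chairs sharing the panel. Features therefore interact pairwise only, and each region is covered by exactly one tile: inside a tube by the panel-matching, and outside the tubes by the carrier tiling. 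The union of the modified bodies is still $\R^3$ and their interiors stay disjoint. I expect the tube bookkeeping to be the step needing most care, in particular identifying duplicate tube sites by centre as the Lean development does.
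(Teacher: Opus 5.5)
Your outline is correct, but Steps~1 and~2 take a different route from the paper, and two side remarks need fixing.

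\emph{Step 1.} The paper needs no compactness for exhaustion. It passes to $\sigma^2$, where the central child $(I,(1,1,1))$ has its own $000$ child at $(2,2,2)$ in the same frame. The recentred patches $A_n=\sigma^{2n}(\carrier)-c_n(1,1,1)$, $c_n=2(4^n-1)/3$, are then literal subpatches of one another. Their supports contain the cubes $[-c_n,(4^n+2)/3]^3$, which exhaust $\R^3$.

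You are right that the one-step central chain fails; in fact its union misses the whole open octant $\{x,y,z>1\}$ beyond the fixed tile's notch. But your reason for the ``cleaner option'' is false: every unit cell of $2\carrier$, hence every child, meets $\partial(2\carrier)$. A choice sequence works when, for instance, it is periodic and the fixed point of its composed contraction lies in the interior of $\carrier$. For the paper's central-then-$000$ choice that point is $\tfrac23(1,1,1)$.

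Your K\"onig alternative is sound, since $\sigma^n(\carrier)$ contains a cube of side $2^n$ and only finitely many registered configurations meet a given ball. It does give up the explicit tiling. And because the limit is not a nested union, Step~2 must then be run on the patches $\sigma^n(\carrier)$, of which every finite patch of the limit is a translate.

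\emph{Step 2.} The paper uses the $30$-state contact closure of \cref{app:profile}. Contacts inside refined patches are, by construction, among the $30$ states obtained by propagating the $21$ dissection contacts through refinement, and the profile was defined by the equations $a_u=-a_v$ on exactly those states.

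Your induction instead shows that refinement preserves $\atlas$-legality, using the inclusion $\atlas\subseteq\tfrac12(\text{admitted parent atlas})$ of \cref{sec:designtest}. That is valid and proves more. However, it brings the $697\to116\to44$ certificate into an existence proof that needs only the smaller closure, and your base case (sibling contacts are legal) still rests on the profile construction.

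Your remark that edge and vertex contacts are ``included in the shell enumeration'' is wrong: a chair covering a shell cell shares that cell's panel with the root. It is also unnecessary, because features lie strictly inside panels and only shared panels carry a matching condition.

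\emph{Step 3.} This is the paper's small-collar realization in pointwise form. The paper packages the same tube bookkeeping as one glued homeomorphism carrying each baseline chair onto its copy of $\solid$, which yields coverage and disjoint interiors at once.
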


\begin{proof}
Every contact that occurs inside a refined patch lies in the closed set of $30$ contacts from
which the profile was built (\cref{app:profile}), so every finite substitution patch is a packing
of copies of $\solid$ whose internal panels match. Two refinements of the native chair contain the
same-frame grandchild at $(2,2,2)$: the central child $(I,(1,1,1))$ has its own $000$ child there.
Put $c_n=2(4^n-1)/3$ and $A_n=\sigma^{2n}(\carrier)-c_n(1,1,1)$, the $2n$-fold refinement
recentred. The grandchild relation makes $A_n$ a literal subpatch of $A_{n+1}$, with the same
frames, and the support of $A_n$ contains the cube $[-c_n,(4^n+2)/3]^3$. These cubes exhaust
$\R^3$, so the nested union is a registered baseline tiling of the whole space by chairs whose
contacts all lie in the closed contact set. The \emph{small-collar realization} lemma then
applies: a locally finite baseline chair tiling whose profiles match on every shared panel is
carried to a tiling by $\solid$ by the glued tube homeomorphism of \cref{app:tubes}, each chair
going exactly to its copy of $\solid$ because every feature tube belongs to the two cells on
either side of its panel, both with unique owners (\cref{sec:registration}, \cref{lem:cover}).
Small-collar realization is the Lean theorem \lean{small\_collar\_realization\_holds}
(\tier{T1n}); its formal proof identifies duplicate tube sites by their centres, so that the
conjugated tube maps agree where two chairs name the same tube, and separates distinct sites by
$21/200>1/16$.
\end{proof}

The same-frame grandchild, the exact dissection and the first two nested patches ($64$ and
$4{,}096$ tiles; $448$ and $28{,}672$ cells) are checked by Lean (\lean{native\_decide}); the
all-$n$ statement is the induction above, assembled in Lean from those checks and the realization
theorem. The construction is explicit; existence is not assumed to make \cref{thm:full}(2)
non-vacuous, and, as in the hat paper, it is proved independently of the census that establishes
universality.

%% file: generated/atlas_44_table.tex
% generated by paper/scripts/atlas_44.py from certificates/candidate_certificate.json (legal_contacts)
\begin{tabular}{rll}
\toprule
\# & frame $(p)\,(s)$ & offset \\
\midrule
1 & $(0,1,2)$ $(-,-,+)$ & $(2,2,-2)$ \\
2 & $(0,1,2)$ $(-,-,+)$ & $(2,2,2)$ \\
3 & $(0,1,2)$ $(-,-,+)$ & $(3,3,-1)$ \\
4 & $(0,1,2)$ $(-,-,+)$ & $(3,3,1)$ \\
5 & $(0,1,2)$ $(-,+,-)$ & $(2,-2,2)$ \\
6 & $(0,1,2)$ $(-,+,-)$ & $(2,2,2)$ \\
7 & $(0,1,2)$ $(+,-,-)$ & $(-2,2,2)$ \\
8 & $(0,1,2)$ $(+,-,-)$ & $(2,2,2)$ \\
9 & $(0,1,2)$ $(+,+,+)$ & $(-1,-1,-1)$ \\
10 & $(0,1,2)$ $(+,+,+)$ & $(1,1,1)$ \\
11 & $(0,2,1)$ $(-,+,+)$ & $(4,0,0)$ \\
12 & $(0,2,1)$ $(+,-,+)$ & $(-1,3,-1)$ \\
13 & $(0,2,1)$ $(+,-,+)$ & $(0,4,0)$ \\
14 & $(0,2,1)$ $(+,+,-)$ & $(0,0,4)$ \\
15 & $(0,2,1)$ $(+,+,-)$ & $(1,1,3)$ \\
16 & $(1,0,2)$ $(-,+,+)$ & $(0,0,0)$ \\
17 & $(1,0,2)$ $(-,+,+)$ & $(4,0,0)$ \\
18 & $(1,0,2)$ $(+,-,+)$ & $(0,0,0)$ \\
19 & $(1,0,2)$ $(+,-,+)$ & $(0,4,0)$ \\
20 & $(1,0,2)$ $(+,+,-)$ & $(-1,-1,3)$ \\
21 & $(1,0,2)$ $(+,+,-)$ & $(0,0,0)$ \\
22 & $(1,0,2)$ $(+,+,-)$ & $(0,0,4)$ \\
\bottomrule
\end{tabular}\hfill
\begin{tabular}{rll}
\toprule
\# & frame $(p)\,(s)$ & offset \\
\midrule
23 & $(1,0,2)$ $(+,+,-)$ & $(1,1,3)$ \\
24 & $(1,2,0)$ $(-,-,+)$ & $(2,2,-2)$ \\
25 & $(1,2,0)$ $(-,-,+)$ & $(2,2,2)$ \\
26 & $(1,2,0)$ $(-,-,+)$ & $(3,3,1)$ \\
27 & $(1,2,0)$ $(-,+,-)$ & $(2,-2,2)$ \\
28 & $(1,2,0)$ $(-,+,-)$ & $(2,2,2)$ \\
29 & $(1,2,0)$ $(-,+,-)$ & $(3,-1,3)$ \\
30 & $(1,2,0)$ $(+,-,-)$ & $(-2,2,2)$ \\
31 & $(1,2,0)$ $(+,-,-)$ & $(2,2,2)$ \\
32 & $(2,0,1)$ $(-,-,+)$ & $(2,2,-2)$ \\
33 & $(2,0,1)$ $(-,-,+)$ & $(2,2,2)$ \\
34 & $(2,0,1)$ $(-,-,+)$ & $(3,3,1)$ \\
35 & $(2,0,1)$ $(-,+,-)$ & $(2,-2,2)$ \\
36 & $(2,0,1)$ $(-,+,-)$ & $(2,2,2)$ \\
37 & $(2,0,1)$ $(+,-,-)$ & $(-2,2,2)$ \\
38 & $(2,0,1)$ $(+,-,-)$ & $(-1,3,3)$ \\
39 & $(2,0,1)$ $(+,-,-)$ & $(2,2,2)$ \\
40 & $(2,1,0)$ $(-,+,+)$ & $(3,-1,-1)$ \\
41 & $(2,1,0)$ $(-,+,+)$ & $(4,0,0)$ \\
42 & $(2,1,0)$ $(+,-,+)$ & $(0,4,0)$ \\
43 & $(2,1,0)$ $(+,+,-)$ & $(0,0,4)$ \\
44 & $(2,1,0)$ $(+,+,-)$ & $(1,1,3)$ \\
\bottomrule
\end{tabular}\hfill

%% file: sec4_companions.tex
\section{Features force companions}\label{sec:companions}

% ledger: A1-A9, A17, O6; source proof/ALIGNMENT_PROOF.md A§1-A§4 (A supersedes R where both treat a step)
This section and the next prove that every tiling by $\solid$ is registered
(\cref{fig:companion}). The argument is
continuous geometry: the pyramids are tiny, but their edges carry dihedral angles that only a
pyramid of the same magnitude and opposite polarity can complete, and that forces every feature
of every tile to be matched, whole, by one feature of one neighbour. We follow the written
alignment proof of the repository, whose lemma labels (A-L2.1 and so on) we give with each
statement together with the Lean theorem that proves it and that theorem's tier
(\cref{tab:hypotheses}); the written proofs are the exposition, and where a formal proof argues
differently we say so after the written one.

% notation; source A§1
Fix a feature $F$ of a tile and write $E(F)$ for its \emph{closed edge graph}: the four edges of
the base square and the four ridges from the base corners to the apex, a compact connected graph
without isolated vertices. In tangent coordinates $u$ on the panel and outward normal coordinate
$z$, the feature of signed height $h=a/10000$ is the graph of the tent function
$f_h(u)=h\max(0,1-\|u\|_\infty/\eta)$, and near it the solid is the hypograph $z\le f_h(u)$.
Throughout, a \emph{packing} is a family of isometric copies of $\solid$ with pairwise disjoint
interiors, and a tiling is a packing whose union is $\R^3$.

\begin{lemma}[local finiteness; A-L2.1, \tier{T1} \lean{local\_finiteness}]\label[lemma]{lem:locfin}
Every packing by congruent copies of $\solid$ is locally finite.
\end{lemma}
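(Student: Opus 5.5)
The proof is a volume-packing argument. It uses only that $\solid$ is a compact set with nonempty interior of fixed volume. First I fix two radii. Let $R$ be the circumradius of $\solid$ about some reference point; since every feature has height at most $12/10000$, the radius $R=\sqrt3+1$ is more than enough. Next I show that $\solid$ contains a closed ball $B(c,r)$ about an interior point $c$. For example, take $c=(1/2,1/2,1/2)$, the centre of the corner cube $[0,1]^3$ of the carrier, with $r=1/4$. The features only modify a layer of thickness at most $12/10000$ near the panels, so this ball lies in the interior of $\solid$. Any isometric copy $g\solid$ then has circumradius $R$ about $g(c')$, where $c'$ is the reference point, and contains the ball $B(g(c),r)$ in its interior.

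Now fix a bounded set, which I may take to be a ball $B(x,\rho)$, and consider the tiles of the packing that meet it. Each such tile lies inside $B(x,\rho+2R)$, since its diameter is at most $2R$. It therefore contains the ball $B(g(c),r)$, which also lies inside $B(x,\rho+2R)$. The interiors of distinct tiles are pairwise disjoint, so these inscribed balls have pairwise disjoint interiors. Comparing volumes, the number of tiles meeting $B(x,\rho)$ is at most
\[
  \frac{\operatorname{vol}B(x,\rho+2R)}{\operatorname{vol}B(0,r)}=\Bigl(\frac{\rho+2R}{r}\Bigr)^3,
\]
which is finite. This is exactly local finiteness.

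The only step requiring care is the claim that a fixed ball lies in the interior of $\solid$, and this is routine here. The carrier $P$ contains the corner cube. Every feature lies within distance $\eta\sqrt2+12/10000<1/50$ of its panel, so it cannot reach within distance $1/4$ of the cube centre. Both bounds are read off directly from the explicit recipe of \cref{sec:solid}. The argument needs neither the tiling hypothesis nor any matching of features, so it applies to arbitrary packings, as the lemma states. A formal version only needs a finite-sum volume inequality for disjoint measurable sets inside a bounded ball. I expect that to be the main bookkeeping effort in Lean, not a mathematical obstacle.
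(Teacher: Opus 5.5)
Your proof is correct and is the paper's argument: carry an inscribed ball along with each tile, use the bounded diameter of $\solid$ to put the balls of all tiles meeting a bounded set into a fixed bounded enlargement, and count disjoint balls of equal positive volume. The only difference is that you make the constants explicit (the radius-$1/4$ ball at the centre of the corner cube, which the paper's formal covering argument also uses, and the diameter bound), where the paper leaves them implicit.
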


\begin{proof}
Choose a ball of positive radius inside the interior of $\solid$ and carry it along with each
tile. The carried balls have pairwise disjoint interiors. A tile meeting a bounded set $K$ has its
ball inside a fixed bounded enlargement of $K$, since $\solid$ has bounded diameter, and only
finitely many disjoint balls of equal positive volume fit there.
\end{proof}

% ledger: A2; Lean cone_sector_budgets_holds (T1n)
\begin{lemma}[cone budgets; A-L2.2, written; Lean \tier{T1n} \lean{cone\_sector\_budgets\_holds}]\label[lemma]{lem:cones}
At a point common to finitely many tiles of a packing, the interiors of their tangent cones are
disjoint, so their solid angles sum to at most $4\pi$, with equality in a tiling. At a generic
point of an edge, a perpendicular cross-section is partitioned into planar sectors whose angles
sum to $2\pi$ in a tiling.
\end{lemma}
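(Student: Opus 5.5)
The plan is to prove \cref{lem:cones} by a local measure argument at the point, using only that $\solid$ is a polyhedron, so that each tile is locally a cone. Fix a point $x$ common to tiles $\tau_1,\dots,\tau_k$ of the packing; by \cref{lem:locfin} only finitely many tiles meet a small ball $B(x,r)$. Since each $\tau_i$ is a polyhedron with finitely many faces, there is $r_0>0$ such that for $r<r_0$ we have $\tau_i\cap B(x,r)=(x+C_i)\cap B(x,r)$, where $C_i$ is the tangent cone of $\tau_i$ at $x$, a closed polyhedral cone. I would choose $r_0$ smaller than the distance from $x$ to every face, edge and vertex of every $\tau_i$ that does not contain $x$, and also smaller than the distance from $x$ to every tile not containing $x$. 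This ensures that inside $B(x,r_0)$ only the $\tau_i$ appear and each is exactly conical.

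Disjointness comes first. If $\operatorname{int}C_i\cap\operatorname{int}C_j\neq\emptyset$ for some $i\ne j$, pick a direction $v$ in both. Then $x+tv$ lies in $\operatorname{int}\tau_i\cap\operatorname{int}\tau_j$ for small $t>0$, because in $B(x,r_0)$ the interior of $\tau_i$ is the interior of the cone. This contradicts the pairwise disjointness of interiors. For the sum, the solid angle of $C_i$ is $\omega_i=3\,\mathrm{vol}(C_i\cap B(0,1))$, and the boundary of a polyhedral cone has measure zero. So the pieces $C_i\cap B(0,1)$ overlap in a null set and $\sum\omega_i\le 3\,\mathrm{vol}\,B(0,1)=4\pi$. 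In a tiling, $B(x,r)$ is covered by the $\tau_i$ for $r<r_0$, so $\bigcup_i C_i=\R^3$ and equality holds.

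For the edge statement, I take a generic point $x$ of an edge. Genericity means $x$ lies in the relative interior of edges of the tiles containing it and at no vertex of any of them, which excludes finitely many points per edge within a bounded region, by local finiteness. Near such $x$ every tangent cone $C_i$ is invariant under translation along the edge direction $e$, so $C_i=\R e\times S_i$ with $S_i$ a planar sector (or half-plane) in $e^\perp$; the half-plane case arises when $x$ lies on a face of a tile. The same disjointness and covering argument, carried out in $e^\perp$ with planar Lebesgue measure, gives interiors of the $S_i$ pairwise disjoint and angles summing to $2\pi$ in a tiling. The one point needing care is that every tile through $x$ has $e$ in its lineality space: a tile meeting $x$ only at a vertex, or along a transverse edge, would break the product structure. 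Genericity handles this, because edges of distinct tiles through $x$ that are not parallel to $e$ meet the line of $e$ only in isolated points, and those points are finitely many near $x$ and are excluded. I regard this genericity bookkeeping as the only real obstacle; the rest is a routine measure argument.
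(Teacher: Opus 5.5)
Your proposal is essentially the paper's proof. The steps match: each tile agrees with its tangent cone on one common small ball, a shared open direction would give overlapping interiors, cone boundaries are null, coverage gives equality, and the edge case is the same argument in $e^\perp$.

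The one slip is in the genericity bookkeeping. You exclude vertices and transverse edges, but not a tile that touches $x$ in the relative interior of a face whose plane crosses the edge line transversally. That tile's cone is a half-space not invariant under $e$, so your product-structure claim does not yet cover it. The paper removes these points explicitly, and there are finitely many of them. Alternatively, your own disjointness step rules the case out: such a half-space always meets $\R e\times\operatorname{int}S$, where $S$ is the root's dihedral sector.
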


\begin{proof}
Near a boundary point a polyhedral set agrees with a translate of its tangent cone, and one
radius serves the finitely many incident tiles; a common open direction would give overlapping
interiors. Cone boundaries are finite unions of planar pieces and have spherical area zero.
Coverage gives equality in a tiling. The planar statement is the same argument in the
perpendicular plane.
\end{proof}

The word \emph{generic} is made precise as follows. Fix a closed root edge and the finitely many
tiles meeting a small neighbourhood of it. Remove their vertices on the edge, their intersections
with non-collinear edges, and the intersections with face planes that meet the root line in a
single point: finitely many points. At every remaining point an incident tile either has a face
containing the line, which contributes a half-plane sector of angle $\pi$, or an edge collinear
with it, which contributes an ordinary dihedral sector. No tile can contain such a point in its
interior, since $\solid$ is the closure of its interior and the root has interior points nearby.
Nothing here assumes that a neighbouring facet coincides with a facet of the root; contacts that
are not face-to-face are handled by the same sectors.

% ledger: A3, O6, A17 (E3); Lean deviations_distinct (T1), mesh_angle_audit (T1n)
Put $t_j=j/100$ for $j=1,\dots,12$, and
\[
  \alpha_j=\arctan t_j,\qquad \beta_j=\arccos\frac{1}{1+t_j^2}.
\]

\begin{lemma}[complete dihedral list; A-L3.1, written; Lean \tier{T1n}
\lean{complete\_dihedral\_list\_holds}; its arithmetic and the mesh audit are separate Lean
theorems, see below]\label[lemma]{lem:dihedral}
A protrusion of magnitude $j$ has base dihedral $\pi+\alpha_j$ and ridge dihedral $\pi-\beta_j$;
a recess has the complementary angles. Every other edge of $\solid$ has dihedral $\pi/2$ or
$3\pi/2$, and flat triangulation edges have angle $\pi$. All $24$ deviations $\alpha_j,\beta_j$
are distinct and less than $\pi/4$.
\end{lemma}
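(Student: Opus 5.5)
The plan is to compute everything from the local model $z\le f_h(u)$ already fixed above, with $h=a/10000$, $\eta=1/100$ and $t=|h|/\eta=j/100=t_j$ for a feature of magnitude $j=|a|$. It has three parts: the feature edges, the remaining edges of $\solid$, and the arithmetic.

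\emph{Feature edges.} On the face of the tent over $u_1\ge|u_2|$ the graph is $z=h(1-u_1/\eta)$, a plane of slope $-h/\eta$.
\begin{itemize}
\item \emph{Base edges.} At a base edge this plane meets the flat panel $z=0$ at inclination $\arctan t_j=\alpha_j$. For a protrusion ($h>0$) the solid lies below a surface that bends upward across the edge, so the interior angle is reflex: $\pi+\alpha_j$.
\item \emph{Ridges.} At a ridge two lateral faces meet. Up to a common sign their normals are $(t,0,1)/\sqrt{1+t^2}$ and $(0,t,1)/\sqrt{1+t^2}$, so the angle between the normals has cosine $1/(1+t^2)$; this angle is $\beta_j$. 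The ridge of a protrusion is convex, so its interior dihedral is $\pi-\beta_j$.
\item \emph{Recesses.} A recess is the same surface with the solid on the other side. Its interior angles are therefore $2\pi$ minus these, namely $\pi-\alpha_j$ at base edges and $\pi+\beta_j$ at ridges, which are the complementary angles claimed.
\end{itemize}

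\emph{Remaining edges.} Every other edge of $\solid$ is an edge of the carrier $\carrier$ or an edge added by triangulating flat regions. To see this I will check, from the offsets $(\pm\tfrac18,\pm\tfrac14)$ and $(\pm\tfrac14,\pm\tfrac18)$, two separation facts.
\begin{itemize}
\item Each feature base $[-\eta,\eta]^2$ about its centre lies in the open interior of its panel, since $\tfrac14+\eta<\tfrac12$.
\item Distinct bases on a panel are disjoint: two centres differ by at least $\tfrac18$ in the $\ell_\infty$ norm, and $\tfrac18>2\eta$.
\end{itemize}
Hence no feature meets an edge of $\carrier$ or another feature. The edges of $\carrier$ are those of the seven-cube chair, which are right angles: convex ($\pi/2$) on the outer boundary and reflex ($3\pi/2$) along the three notch creases and the re-entrant edges. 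Any further mesh edge lies inside a planar region and has angle $\pi$. This step is the exact mesh audit: I would enumerate the faces of the rational mesh and confirm that each edge is of one of these types.

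\emph{Arithmetic.} Both $\alpha_j$ and $\beta_j$ increase strictly with $j$, so each family has twelve distinct values.
\begin{itemize}
\item From $\cos\beta=1/(1+t^2)$ we get $\tan\beta_j=t_j\sqrt{2+t_j^2}$.
\item An equality $\alpha_i=\beta_j$ would force $t_i^2=t_j^2(2+t_j^2)$, that is, $i^2\cdot10^4=j^2(2\cdot10^4+j^2)$. This is a finite integer check over $1\le i,j\le12$, and it fails in every case: the left side is at most $1.44\times10^6$, while the right side is at least $2.0001\times10^4\,j^2$ and lies strictly between consecutive values of $i^2\cdot10^4$.
\item All deviations are below $\pi/4$ because every tangent is less than $1$: $t_j\le0.12$ and $0.12\sqrt{2.0144}<1$.
\end{itemize}

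\emph{Main obstacle.} I expect the hard part to be completeness of the edge list, not the angle formulas. One has to make sure the notch panels $4,14,23$, whose features sit near the reflex creases, create no unexpected edges. One must also confirm that no triangulation edge crosses a tent non-trivially, so that the only non-flat, non-right dihedrals are the $8\times192$ feature edges. I would settle this by the exact rational mesh audit and not by a written argument.
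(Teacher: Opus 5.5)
Your proposal is correct and follows the paper's proof. The tent's facet slopes and normals give the base and ridge dihedrals and their signs. Disjointness of the feature supports from each other and from the carrier edges leaves only the right-angled carrier edges and flat triangulation edges. Distinctness and the $\pi/4$ bound reduce to $10000\,i^2=20000\,j^2+j^4$ and $(1+t_j^2)^2<2$, which is equivalent to your $t_j\sqrt{2+t_j^2}<1$.

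The differences are minor. The paper kills the Diophantine equation by divisibility ($10\mid j$, so $j=10$ and $i^2=201$), where you bracket the right side between consecutive values of $10^4 i^2$. The paper also treats the exact mesh audit as a separate check that the actual mesh matches the construction, not as a step the written argument needs: your two separation facts already settle edge completeness for the ideal solid.
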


\begin{proof}
The facets of the tent have slopes $(0,0)$, $(\pm t_j,0)$ and $(0,\pm t_j)$; their outward normals
give the formulas and the signs. The tube construction changes no carrier edge, and the disjoint
feature supports create no other kind of edge. Both sequences increase strictly, and a
coincidence $\alpha_i=\beta_j$ would force $1+(i/100)^2=(1+(j/100)^2)^2$, that is
$10000\,i^2=20000\,j^2+j^4$; then $10\mid j$, so $j=10$ and $i^2=201$, impossible. The bound
$\pi/4$ follows from $t_j<1$ and $(1+t_j^2)^2<2$ at $t_{12}=3/25$.
\end{proof}

The unsolvability of $10000\,i^2=20000\,j^2+j^4$ over $1\le i,j\le12$ is a Lean theorem by kernel
\lean{decide} (\lean{deviations\_distinct}). The lemma is about the ideal construction; a separate
audit, a Lean theorem using \lean{native\_decide} (\lean{mesh\_angle\_audit}), classifies every
one of the $6{,}408$ edges of the actual mesh by exact rational outward normals, finds exactly
$1{,}536$ feature edges with $32$ of each sign for each of the $24$ deviations, and checks that
every other edge is flat or has orthogonal normals (\cref{tab:mesh}).

% ledger: A4, A17 (E3); Lean generic_feature_partner_holds (T1n)
\begin{lemma}[one complementary partner; A-L3.2, written; Lean \tier{T1n}
\lean{generic\_feature\_partner\_holds}]\label[lemma]{lem:partner}
At a generic point of a feature edge in any tiling exactly two tiles are incident: the root, and
one tile whose incident edge has the same type (base or ridge), the same magnitude, and the
complementary dihedral angle.
\end{lemma}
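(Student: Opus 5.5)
We plan to argue in the perpendicular cross-section at a generic point $x$ of a feature edge $e$ of the root tile, using the cone budgets of \cref{lem:cones} and the dihedral list of \cref{lem:dihedral}. By \cref{lem:locfin} only finitely many tiles meet a small ball around $x$. Genericity means that each incident tile contributes to the cross-section either a half-plane sector (angle $\pi$, from a face containing the line of $e$) or a dihedral sector of an edge collinear with $e$. These sector angles are taken from the list $\pi$, $\pi/2$, $3\pi/2$, $\pi\pm\alpha_j$, $\pi\pm\beta_j$. The root contributes $\theta_0=\pi\pm\alpha_j$ (base edge) or $\pi\pm\beta_j$ (ridge edge), and in a tiling the angles sum to exactly $2\pi$.

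\textbf{Step 1: exactly one other tile.} The remaining angle is $2\pi-\theta_0=\pi\mp\delta$ with $0<\delta<\pi/4$. Every available sector angle is at least $\pi/2$, and every one except $\pi/2$ exceeds $3\pi/4$. We case on the number $k$ of further tiles.
\begin{itemize}
\item $k\ge3$ gives a total of at least $3\pi/2>\pi+\delta$.
\item $k=2$ forces two angles summing to $\pi\mp\delta$. Both would have to be near $\pi/2$, since $\pi/2+(\pi-\gamma)\ge 3\pi/2-\pi/4>\pi+\delta$ for every deviation $\gamma<\pi/4$. But $\pi/2+\pi/2=\pi\ne\pi\mp\delta$ because $\delta\neq0$.
\item $k=0$ is impossible since $\theta_0\ne2\pi$.
\end{itemize}
Hence $k=1$, and the single partner's sector has angle exactly $2\pi-\theta_0$.

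\textbf{Step 2: identify the partner's edge.} The angle $2\pi-\theta_0$ is not $\pi$, because $\delta\ne0$, so the partner does not merely lay a flat face along the line. It is also not $\pi/2$ or $3\pi/2$, because $\delta<\pi/4$ keeps $\pi\mp\delta$ strictly between these values. So the partner's incident edge is a feature edge with deviation exactly $\delta$. The distinctness of all $24$ deviations in \cref{lem:dihedral} then pins down both the type (an $\alpha$ means base, a $\beta$ means ridge) and the magnitude $j$. The complementary sign (protrusion against recess) is forced because the two angles sum to $2\pi$.

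\textbf{Main obstacle.} The delicate point is the genericity bookkeeping rather than the arithmetic. We must make sure that flat triangulation edges of angle $\pi$ and non-face-to-face contacts are faithfully captured as sectors, so that the sector sum is a genuine equality and not merely a bound. We also must rule out contributions of tiles touching $x$ only at lower-dimensional strata. We would handle both by the explicit finite exclusion of points described after \cref{lem:cones}, together with the fact that $\solid$ is the closure of its interior: any tile containing $x$ then has a sector of positive angle, and no sector can be degenerate.
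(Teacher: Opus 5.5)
Your proposal is correct and is essentially the paper's argument: both use the $2\pi$ sector budget of \cref{lem:cones} at a generic point, the angle list and the distinctness of the $24$ deviations from \cref{lem:dihedral}, and the bounds that every sector is at least $\pi/2$ while every sector other than $\pi/2$ exceeds $3\pi/4$. The only difference is bookkeeping: you split on the number of other incident tiles, whereas the paper splits on the number of feature sectors (a single one is impossible because the remaining sectors are multiples of $\pi/2$, and three, or two plus an ordinary sector, exceed $2\pi$), which is the same arithmetic arranged differently.
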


\begin{proof}
Every feature sector lies strictly between $3\pi/4$ and $5\pi/4$. Three feature sectors exceed
$2\pi$, as do two feature sectors together with any ordinary sector, whose angle is at least
$\pi/2$. A single feature sector is impossible: its angle is $\pi$ plus a nonzero deviation of
magnitude less than $\pi/4$, while the remaining sectors are multiples of $\pi/2$ (a face
containing the line contributes $\pi$, as noted above), and such a sum is not $2\pi$. The root
supplies one feature sector, so there are exactly two and nothing else; their deviations are
opposite and equal in magnitude, and \cref{lem:dihedral} identifies the type and the magnitude.
\end{proof}

This does not yet say that an edge angle forces a whole feature. That needs a bound at the
vertices of the feature, where sectors are not available.

% ledger: A5, A6; Lean feature_circular_cone_containment_holds (T1n), circular_cone_solid_angle_holds (T1)
\begin{lemma}[solid-angle bound on the feature graph; A-L4.1, written; Lean \tier{T1n}
\lean{feature\_circular\_cone\_containment\_holds}, with the cone formula and
its arithmetic \tier{T1} \lean{circular\_cone\_solid\_angle\_holds}]\label[lemma]{lem:solidangle}
At every point of $E(F)$, including the apex and the base corners, the interior solid angle of
the tile exceeds $4\pi/3$; for $\solid$ it exceeds $7\pi/4$.
\end{lemma}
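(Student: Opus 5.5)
The plan is to reduce the statement to a comparison with a circular cone, using the local description of $\solid$ near a feature that was fixed just before \cref{lem:partner}. Near $F$ the solid is the hypograph $z\le f_h(u)$ of the tent function. So the first step is to confirm that this description holds on a full neighbourhood of every point of $E(F)$, not only near generic edge points.

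The feature base has half-width $\eta=1/100$ and is centred at an offset $(\pm\tfrac18,\pm\tfrac14)$ or $(\pm\tfrac14,\pm\tfrac18)$ from the panel centre. Its closure therefore stays at distance at least $\tfrac12-\tfrac14-\tfrac1{100}>0$ from the panel boundary, so no carrier edge comes near $E(F)$. Two features on the same panel are separated by at least $\tfrac14-\tfrac1{50}>0$, so they do not interact either. I will take from \cref{app:tubes} that the tube construction alters nothing outside these disjoint feature supports. Hence at every $p\in E(F)$, including the apex and the four base corners, the tangent cone of the tile is the hypograph $\{z\le g_p(u)\}$. Here $g_p$ is the tangent cone of $f_h$ at $p$: a continuous, piecewise linear, positively homogeneous function of degree one.

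Next I bound the slopes of $g_p$. Every linear piece of $f_h$ has gradient $(0,0)$, $(\pm t_j,0)$ or $(0,\pm t_j)$, with $t_j=j/100$, so each has Euclidean norm at most $t_j\le t_{12}=3/25$. The same holds for $h<0$, since a recess only flips signs. A continuous piecewise linear function whose pieces all have gradient norm at most $t$ is $t$-Lipschitz, by integrating along a segment, which crosses finitely many pieces. Therefore $g_p(u)\ge -t|u|$, and the tangent cone contains the downward circular cone
\[
  C_t=\{(u,z): z\le -t|u|\}.
\]
This cone has half-aperture $\pi/2-\arctan t$ about the $-z$ axis. Its solid angle is $2\pi(1-\sin\arctan t)=2\pi\bigl(1-t/\sqrt{1+t^2}\bigr)$, which decreases in $t$.

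At $t=3/25$ this equals $2\pi(1-3/\sqrt{634})$. Since $3/\sqrt{634}<1/8$ (equivalently $576<634$), the solid angle is greater than $2\pi\cdot\tfrac78=7\pi/4$, and hence greater than $4\pi/3$. For the general claim the same computation works for any tent with slope $t<1/\sqrt3$, where $2\pi(1-t/\sqrt{1+t^2})>4\pi/3$ reduces to $t/\sqrt{1+t^2}<1/3$. The containment is strict, because $g_p$ has pieces with gradient $0$ or strictly above the extremal cone. The inequality is already strict anyway.

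The main obstacle I expect is the first step, not the cone arithmetic: justifying rigorously that at a base corner or at the apex the tile is locally exactly the hypograph. This needs the exact rational mesh rather than the idealized tent. I would first lean on the mesh audit of \cref{tab:mesh}, which classifies every feature edge by exact normals, to certify that every facet meeting $E(F)$ has normal making angle at most $\arctan(3/25)$ with the panel normal. The cone containment then follows facet by facet.
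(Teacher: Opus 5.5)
Your argument is correct and follows the paper's route. The tent is Lipschitz with constant $L=t_{12}=3/25$, including across its creases and the transition to the flat panel. So the interior tangent cone at every point of $E(F)$ contains the circular cone $\{w<-L\|v\|\}$, whose solid angle is $2\pi\bigl(1-L/\sqrt{1+L^2}\bigr)$, and $63L^2=567/625<1$ (your $576<634$) gives a value above $7\pi/4$.

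There are two slips, neither affecting the conclusion for $\solid$:
\begin{itemize}
\item The general threshold for exceeding $4\pi/3$ is $8t^2<1$, that is $t<1/(2\sqrt2)$, not $t<1/\sqrt3$; at $t=1/2$ one already has $t/\sqrt{1+t^2}>1/3$.
\item Features on one panel are only $1/8$ apart in one tangent coordinate, so their bases are separated by $1/8-1/50=21/200$, not $1/4-1/50$.
\end{itemize}
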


\begin{proof}
The tent is Lipschitz with constant at most $L=3/25$, on its faces, along its creases, and across
the transition to the flat panel. At any point of the graph the tile's interior tangent cone
contains the circular cone $\{(v,w): w<-L\|v\|_2\}$, whose solid angle is
$\Omega(L)=2\pi\bigl(1-L/\sqrt{1+L^2}\bigr)$. This exceeds $4\pi/3$ exactly when $8L^2<1$, and here
$8L^2=72/625$; it exceeds $7\pi/4$ when $63L^2<1$, and $63L^2=567/625$.
\end{proof}

Only the weaker bound is used below: by \cref{lem:cones}, three distinct tiles cannot share a
point lying on a feature graph of each of them.

\begin{figure}[ht]
  \centering
  \resizebox{\linewidth}{!}{\input{fig_companion_sectors}}
  \caption{The companion argument: sectors at a generic edge point (\cref{lem:partner}) and the
  solid-angle cone at a vertex of the feature graph (\cref{lem:solidangle}).}
  \label{fig:companion}
\end{figure}

% ledger: A7; Lean connected_feature_companion_holds (T1n)
\begin{theorem}[one tile accompanies the whole feature graph; A-T4.2, written; Lean \tier{T1n}
\lean{connected\_feature\_companion\_holds}]\label{thm:companion}
For each feature $F$ of a tile $R$ in any tiling there is a unique other tile $S$ such that
$E(F)$ is contained in the union of the feature edge graphs of $S$; moreover $E(F)$ lies in one
feature edge graph of $S$.
\end{theorem}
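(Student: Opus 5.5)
The proof has three steps: a local partner at each generic edge point, constancy of that partner along the connected graph, and a rigid-body argument that the partner's graph agrees with $E(F)$. Throughout we work in a fixed tiling. By \cref{lem:locfin}, only finitely many tiles meet a neighbourhood of $E(F)$, so \cref{lem:cones} applies at every point of $E(F)$.

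\emph{Step 1: a partner map on generic points.} For each edge $e$ of $E(F)$, the non-generic points of $e$ form a finite set. At each generic point $x\in e$, \cref{lem:partner} gives exactly one other tile $S(x)$. Its edge through $x$ is a feature edge of $S(x)$ of the same type and magnitude and with complementary dihedral angle. In particular $x$ lies on a feature edge graph of $S(x)$.

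\emph{Step 2: the partner is one tile.} On the open interval between two consecutive non-generic points of $e$, the map $x\mapsto S(x)$ is locally constant: near a generic point only finitely many tiles are incident, and the sector decomposition persists along the edge. Hence $S$ is constant on each interval. To pass a non-generic point $y$, including the apex and the base corners, take the tiles $S_1,S_2$ on the two sides of $y$. By closedness, $y$ lies on the feature graphs of $R$, $S_1$ and $S_2$. The consequence of \cref{lem:solidangle} recorded after it forbids three distinct tiles from sharing a point on a feature graph of each, so $S_1=S_2$. The same applies where different edges of $E(F)$ meet at a vertex. Since $E(F)$ is connected and has no isolated vertices, the partner is a single tile $S$, and $E(F)$ lies in the union of the feature edge graphs of $S$.

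Uniqueness follows from the same fact. Any tile $S'\neq S$ whose feature graphs cover $E(F)$ would, together with $R$ and $S$, give three tiles sharing a generic point on a feature graph of each, which is a contradiction.

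\emph{Step 3: the covering edges lie in one feature.} This is the step I expect to be the main obstacle. Each edge of $E(F)$ is a segment covered by feature edges of $S$ of the same type and magnitude. I would argue in three moves.

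First, the feature supports of $\solid$ are pairwise disjoint squares of side $1/50$. A feature edge has length at most about $1/50$, and distinct features are separated by far more than that. So a segment of $E(F)$ cannot be covered by edges from two different features of $S$ unless those edges meet. If they did meet, the shared point would be a vertex of $E(F)$ lying on two disjoint feature graphs of $S$, which is impossible. Hence each edge of $E(F)$ lies inside a single edge of a single feature $F'$ of $S$.

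Second, $E(F)$ is connected, and again distinct feature graphs of $S$ are disjoint. Therefore every edge of $E(F)$ lies in the same $F'$.

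Third, I would check that the edges match exactly rather than nest. The four ridges meet at the apex, and the dihedral data of \cref{lem:dihedral} distinguish base edges from ridges. Any isometry carrying the ridges of $F$ into ridges of $F'$, with matching magnitude and complementary polarity, must map apex to apex and base square to base square. So $E(F)\subseteq E(F')$, and by symmetry of lengths $E(F)=E(F')$.

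The delicate point is the containment in the second move. There I would rely on the explicit separation of feature sites (at least $21/200$ apart) compared with feature diameter about $\sqrt2/50$, both of which can be read off \cref{sec:solid}.
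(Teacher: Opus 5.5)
Your proof is correct. It uses the same ingredients as the paper: \cref{lem:partner} at generic points, closedness of feature graphs, the three-tile exclusion from \cref{lem:cones,lem:solidangle}, local finiteness, and connectedness of $E(F)$. What differs is how connectedness is brought in.

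You run a continuation argument. You make the partner locally constant on each open interval between consecutive non-generic points; this holds because a generic point is not a vertex of any nearby tile, so the partner's collinear feature edge contains a neighbourhood of it in the root edge. You then glue across the finitely many exceptional points and the vertices of $E(F)$ one at a time.

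The paper uses no continuation. For each of the finitely many tiles $S\ne R$ meeting $E(F)$ it sets $C_S=E(F)\cap\bigcup(\text{feature graphs of }S)$. These closed sets cover $E(F)$, because they contain the dense set of generic points and there are finitely many of them. They are pairwise disjoint by the three-tile exclusion. A connected space has no nontrivial finite disjoint closed cover, so exactly one $C_S$ is nonempty. The same argument, applied to the disjoint compact feature graphs of $S$, gives the ``moreover'' clause.

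The paper's version is shorter and needs no local-constancy step. It treats the apex, base corners, mesh subdivision vertices and a possible switch between two features of one neighbour in a single stroke. Yours gives a more explicit picture, but pays for it with the extra local-constancy lemma and the case-by-case gluing.

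Two remarks on your Step~3. First, no distance estimate between feature sites is needed. Disjointness of the finitely many compact feature graphs, plus connectedness, is all that is used, exactly as in your second move, and that already gives the theorem's final clause. Second, your third move proves $E(F)=E(F')$, which goes beyond the statement. The paper proves this separately in \cref{lem:equal}: by additivity of edge length in the written proof, and by ontoness of an isometric self-embedding of a compact space in the formal one. Your phrase ``any isometry carrying the ridges'' is also loose, since no isometry has been exhibited. What you actually have is that each ridge of $F$ lies in a ridge of $F'$. The four non-collinear ridges of $F$ through its apex must therefore lie in four distinct ridges of $F'$, whose only common point is the apex of $F'$, and equal ridge lengths then force coincidence.
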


\begin{proof}
By \cref{lem:locfin}, only finitely many tiles other than $R$ meet $E(F)$. For each such tile
$S\ne R$, let $C_S$ be the intersection of $E(F)$ with the union of the feature edge graphs of
$S$; each $C_S$ is closed in $E(F)$. The sets $C_S$ cover $E(F)$: the generic interior points of the root's edges are covered
by \cref{lem:partner}, they are dense in $E(F)$, and a finite union of closed sets containing a
dense subset is everything. The sets $C_S$ are pairwise disjoint: a point in $C_S\cap C_T$ with
$S\ne T$ would lie on a feature graph of $R$, of $S$ and of $T$, which \cref{lem:cones,lem:solidangle}
forbid. A connected space is not the union of two or more nonempty members of a finite disjoint
closed cover, each member being also open. Hence exactly one $C_S$ is nonempty and equals $E(F)$.
The feature graphs of $S$ are disjoint compact sets, and the same connectedness argument applied
to $E(F)$ inside their union puts $E(F)$ in one of them.
\end{proof}

This single argument disposes of every case in which a partner might change: at an interior edge
point, at a subdivision vertex of the mesh, at the apex, at a base corner, or between two
features of the same neighbour. It needs no face-to-face hypothesis and no continuation rule
supplied by the observer.

% ledger: A8, A17 (E1); Lean feature_containment_rigidity_holds (T1n; formal proof by compactness)
\begin{lemma}[containment implies equality; A-L4.3, written, with erratum E1; Lean \tier{T1n}
\lean{feature\_containment\_rigidity\_holds}]\label[lemma]{lem:equal}
The features $F$ and $F'$ of \cref{thm:companion} have the same closed edge graph, the same
magnitude, and opposite polarity; their triangulated surfaces coincide.
\end{lemma}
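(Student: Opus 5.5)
We start from the conclusion of \cref{thm:companion}: the closed edge graph $E(F)$ of the root feature lies inside one feature edge graph $E(F')$ of the companion tile $S$. We first show $E(F)=E(F')$, then read off the magnitude and polarity, and finally recover the surfaces.

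\emph{Step 1: the two graphs are equal.} Each feature edge graph is the image of the same combinatorial graph, a square base with an apex joined to its four corners. Its eight edges are straight segments of lengths $2\eta$ (base) and $\sqrt{2\eta^2+h^2}$ (ridges). A connected union of straight segments, embedded in another such graph, must use whole edges as long as it contains the vertices where direction changes. The graph $E(F)$ has five such vertices: four base corners of degree three and an apex of degree four. Hence each vertex of $E(F)$ is a point of $E(F')$ at which $E(F')$ has at least that many non-collinear directions, so it must be a vertex of $E(F')$ of at least that degree. Only the apex of $E(F')$ has degree four, so the two apexes coincide. The corners of degree three must then go to the base corners of $E(F')$. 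Each edge of $E(F)$ joins two vertices of $E(F')$ by a straight segment inside $E(F')$. Such a segment is an edge of $E(F')$, because the base diagonals are not in the graph. Hence $E(F)\subseteq E(F')$ is a union of whole edges containing all five vertices and all eight connections, and so $E(F)=E(F')$.

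\emph{Step 2: magnitude and polarity.} \Cref{lem:partner} applied at a generic point of a base edge shows that the two incident edges have the same type and magnitude and complementary dihedrals. Equal edge graphs force equal base squares and the same apex, so $|h|=|h'|$. Complementarity of the dihedral angles then forces opposite polarity: the two tiles lie on opposite sides, and one is a protrusion into the other's recess. As a check, the apex heights measured from the common base plane agree, which also gives equal magnitude directly.

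\emph{Step 3: surfaces, and the main obstacle.} The triangulated feature surface of each tile is the cone from the apex over the base square, i.e.\ the four triangles spanned by the graph. It is therefore determined by the edge graph together with the base plane, and the two surfaces coincide.

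The delicate point is Step 1 in the presence of degenerate positions. The erratum~E1 presumably concerns the case where $E(F)$ sits in $E(F')$ with a vertex of $E(F)$ at an interior edge point of $E(F')$, or with the two graphs reflected. Here I would argue via the tangent directions at each vertex: at a base corner $E(F)$ has three directions, two in the base plane and one tilted by the small slope. A ridge interior point of $E(F')$ offers only two collinear directions. The apex of $E(F')$ offers four directions with slope $\pm h'/\eta$ that are not in the base plane of $F$ unless the planes agree. The formal proof's compactness route is an alternative if this casework becomes messy.
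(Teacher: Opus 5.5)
Your proposal is correct. Steps~2 and~3 match the paper's closing sentence, but Step~1 reaches $E(F)=E(F')$ by a different route.

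\emph{The paper's route.} The written proof first applies \cref{lem:partner} at a generic interior point of a base edge of $F$, chosen off the vertices of $E(F')$. This shows that $F'$ has the same magnitude $j$, hence the same total edge length $\ell_j=8\eta+4\sqrt{2\eta^2+(j/10000)^2}$. A proper closed inclusion of one such graph in the other would then leave a nondegenerate subsegment of $E(F')$ outside $E(F)$, giving $\ell_j<\ell_j$. Only after equality does the paper use the degree-four apex and the degree-three corners, to recover the base square and the four triangles. The Lean proof replaces the length count by the fact that an isometric self-map of a compact set is onto. That argument likewise compares two congruent copies of one graph, so it too needs the magnitude first.

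\emph{Your route.} You compare local ray structures instead. A vertex of $E(F)$ with $k$ distinct rays must sit at a point of $E(F')$ with at least $k$ rays. This pins the apex to the apex and the four base corners to the four base corners. Since no two edges of a pyramid are collinear, each edge of $E(F)$ is then a whole edge of $E(F')$.

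\emph{What each buys.} Your argument gives graph equality with no metric input and without knowing the magnitude in advance. The magnitude then falls out of the common base and apex, and \cref{lem:partner} is needed only for the polarity. The cost is two local facts you use implicitly:
\begin{itemize}
\item $E(F')$ is an embedded graph whose intersection with a small ball about any of its points is a union of $2$, $3$ or $4$ rays;
\item a segment from that point lying in a finite union of such rays lies in one of them.
\end{itemize}
The paper's route is shorter once the magnitude is known, and it is the form that mechanizes cleanly.

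Your last paragraph is not needed. The case of a base corner of $E(F)$ landing on the apex of $E(F')$ is already excluded in Step~1, since that point is occupied by the apex of $E(F)$. The speculation about erratum~E1 plays no role in the argument.
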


\begin{proof}
Choose a point in the interior of a base edge of $F$ that avoids the vertices of the companion's
graph and the finite exceptional set of \cref{lem:cones}; such a point exists. There
\cref{lem:partner} gives $F'$ the same magnitude $j$ and the same edge type. A feature of
magnitude $j$ has total edge length $\ell_j=8\eta+4\sqrt{2\eta^2+(j/10000)^2}$. We know $E(F)$ is
a closed subset of $E(F')$; if the inclusion were proper, a point of $E(F')$ outside $E(F)$ would
have positive distance from the closed set $E(F)$, and since $E(F')$ has no isolated vertices it
would contribute a nonzero subsegment outside $E(F)$, giving $\ell_j<\ell_j$ by additivity of
length. So the graphs are equal. In the graph the apex is the unique vertex of degree four and
the base corners have degree three, so equality recovers the base square, the apex, and the four
triangles; complementary base dihedrals force opposite polarity.
\end{proof}

The formal proof argues differently: the graph $E(F)$ is compact and the placement carrying it
into $E(F')$ is an isometry, and an isometric self-map of a compact metric space is onto, so the
containment is an equality without the length count; the premise that the two tiles are distinct
is used there.

% ledger: A9; Lean companion_pose_discrete_holds (T1n)
\begin{corollary}[a feature match discretizes the pose; A-C4.4, written; Lean \tier{T1n}
\lean{companion\_pose\_discrete\_holds}]\label[corollary]{cor:discrete}
Normalize the root's pose to $(I,0)$. Every feature companion has a signed permutation matrix $G$
as linear part and translation $t=c_u-Gc_v\in\tfrac18\Z^3$, where $c_u,c_v$ are native feature
centres with $a_u=-a_v$ and outward base normals $n_u=-Gn_v$.
\end{corollary}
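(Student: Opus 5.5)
We work directly from \cref{lem:equal}. Let $S$ be the companion tile with pose $(G,t)$, so that $S=G\solid+t$ with $G$ an arbitrary orthogonal matrix (reflections allowed), and let $F=F_u$ be the feature of the root and $F'$ the feature of $\solid$ whose image $GF_v+t$ has the same closed edge graph. The plan is to read off $G$ from the geometry of the common edge graph, then $t$ from its centre, and finally the magnitude and normal relations from the polarity statement.

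\emph{Step 1: the linear part.} The closed edge graph $E(F_u)$ determines its base square, which is an axis-parallel square of side $2\eta$ in a coordinate plane, centred at $c_u$; it also determines the apex $c_u+h_u n_u$ with $h_u\ne0$. The same graph equals $G\,E(F_v)+t$, whose base square is $G$ applied to an axis-parallel square in a coordinate plane with centre $Gc_v+t$, and whose apex is $Gc_v+t+h_vGn_v$. Matching the degree-four vertex and the base squares, the base normal line is preserved, so $G$ maps the line $\R n_v$ to $\R n_u$. The base edges also match: $G$ maps the two edge directions of the base of $F_v$, which are coordinate axes, onto the two edge directions of the base of $F_u$, which are again coordinate axes. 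Hence $G$ sends each coordinate axis to a coordinate axis, and an orthogonal matrix with that property is a signed permutation matrix. This is the one step that needs care. The base square is only a square, so its edge directions are determined only as an unordered pair of axes, but that is all we need. Note also that $G$ need not be proper here; chirality is handled later.

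\emph{Step 2: translation and relations.} Equality of the base squares gives equal centres, $c_u=Gc_v+t$, so $t=c_u-Gc_v$. Every native feature centre is a panel centre, with coordinates in $\{0,\tfrac12,1,\tfrac32,2\}$, plus an offset from $\{\pm\tfrac18,\pm\tfrac14\}$. These lie in $\tfrac18\Z^3$, and a signed permutation preserves $\tfrac18\Z^3$, so $t\in\tfrac18\Z^3$. \Cref{lem:equal} says the two features have the same magnitude and opposite polarity. Since the surfaces coincide while the tiles lie on opposite sides, the outward normals are opposite: $Gn_v=-n_u$. The apex of $F_u$ is $c_u+h_un_u$, and that of the image is $c_u+h_vGn_v=c_u-h_vn_u$. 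Equating the two apexes gives $h_u=-h_v$, that is, $a_u=-a_v$.

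\emph{Main obstacle.} The key point is to argue cleanly that $G$ is determined, as a signed permutation, by the edge graph alone. In particular we must rule out a rotation of the base square by an angle other than a multiple of $\pi/2$ about its normal. Such a rotation would map axis-parallel edges to non-axis-parallel ones, so it cannot make the graphs equal. This is exactly where we use the axis-parallelism of the bases.
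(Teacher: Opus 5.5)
Your proof is correct and follows essentially the paper's argument: the base-edge directions and the base normal are coordinate directions and are carried to coordinate directions, so $G$ is a signed permutation of either determinant; the common graph fixes the centre; and all centres lie in $\tfrac18\Z^3$. The only difference is the order of the last step. The paper takes $a_u=-a_v$ directly from \cref{lem:equal} and deduces $n_u=-Gn_v$ from the common apex, whereas you obtain the normal relation from the two tiles lying on opposite sides of the common surface and then recover $a_u=-a_v$ from the apex, which is equally valid but redundant given \cref{lem:equal}.
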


\begin{proof}
A rigid motion matching two complete pyramid graphs carries base-edge directions and base-plane
normal to each other, and those are the coordinate directions, so its orthogonal part is a
signed permutation, of either determinant. The graph identifies the centre; opposite polarity and
a common apex force opposite outward normals; all centres lie in $\tfrac18\Z^3$.
\end{proof}

No global alignment has been concluded yet: only the poses of complete feature companions have
been reduced to a finite list, which the next section examines exhaustively. The mechanism is the
one Socolar and Taylor use for their three-dimensional tile, where plugs enforce a matching rule
by shape \cite{ST11}; here the pyramids enforce, by dihedral angles and solid angles alone, that
every feature is matched by a feature of the opposite polarity on one neighbour.

%% file: fig_companion_sectors.tex
% fig_companion_sectors.tex — Fig. 4.1: the sector budget at a generic feature-edge point (left),
% why three feature sectors cannot meet (middle), and the solid-angle cone at a vertex of the
% feature graph (right). Schematic. Used by sec4_companions.tex.
\begin{tikzpicture}[x=1cm,y=1cm,font=\small,>={Latex[length=1.6mm]}]
  \begin{scope}
    \fill[red!15] (0,0) -- (-2.2,0) arc (180:360:2.2) -- (2.2,0) -- (2.2*0.94,2.2*0.34) arc (20:0:2.2) -- cycle;
    \fill[blue!15] (0,0) -- (2.2*0.94,2.2*0.34) arc (20:180:2.2) -- cycle;
    \draw[thick] (-2.2,0) -- (2.2,0);
    \draw[thick] (0,0) -- (2.2*0.94,2.2*0.34);
    \node at (0,-1.1) {root: $\pi+\alpha_j$};
    \node at (-0.6,1.0) {partner: $\pi-\alpha_j$};
    \node[anchor=north,align=center,text width=5.2cm] at (0,-2.5)
      {a generic point of a feature edge: exactly two sectors, the same magnitude $j$, the same type, complementary angles};
  \end{scope}
  \begin{scope}[shift={(7.4,0)}]
    \draw[thick] (0,0) circle (1.6);
    \foreach \a in {0,120,240} { \draw[thick] (0,0) -- (\a:1.6); }
    \node at (60:1.0) {$>\tfrac{3\pi}{4}$};
    \node at (180:1.0) {$>\tfrac{3\pi}{4}$};
    \node at (300:1.0) {$>\tfrac{3\pi}{4}$};
    \node[anchor=north,align=center,text width=5.2cm] at (0,-2.5)
      {three feature sectors would exceed $2\pi$; two plus an ordinary sector ($\ge\pi/2$) too};
  \end{scope}
  \begin{scope}[shift={(14.8,0)}]
    \draw[thick] (-1.9,0) -- (1.9,0);
    \fill[black!8] (0,0) -- (-1.8,-0.22) -- (-1.8,-1.5) -- (1.8,-1.5) -- (1.8,-0.22) -- cycle;
    \draw[thick] (0,0) -- (-1.8,-0.22); \draw[thick] (0,0) -- (1.8,-0.22);
    \fill (0,0) circle (1.2pt);
    \node[anchor=south] at (0,0.08) {apex or base corner};
    \node[anchor=north,align=center,text width=5.2cm] at (0,-2.5)
      {interior solid angle $>4\pi/3$ at every point of $E(F)$: no third tile on the graph};
  \end{scope}
\end{tikzpicture}

%% file: sec5_registration.tex
\section{Every tiling is registered}\label{sec:registration}

% ledger: A10-A15, R9, M3, E6; source proof/ALIGNMENT_PROOF.md A§5-A§6; Lean retained_core_overlap_holds (T1), mates_census (T1n), registration_of_tiling (spine)
Let $F_0$ be the finite set of poses given by \cref{cor:discrete}: for every ordered pair of
native features of opposite signed height and every signed permutation $G$ with $n_u=-Gn_v$, the
pose $(G,\,c_u-Gc_v)$. There are $6{,}862$ of them, with translations in eighth units, and for
each pose we retain the list of root features it can fully accompany. Each is a candidate for the
pose of a tile sharing a feature with the root; the task is to show that only $44$ of them, all
integral and proper, can occur in a tiling.

\begin{lemma}[eighth-grid overlaps penetrate the cores; A-L5.1; the overlap implication is the
Lean theorem \lean{retained\_core\_overlap\_holds} (\tier{T1}), the quantitative bound the written
estimate in the proof]\label[lemma]{lem:core}
If two poses in $F_0$ have baseline chairs overlapping in positive volume, the corresponding
copies of $\solid$ overlap in positive volume; more precisely, an overlapping pair of unit cubes
yields an open box of side at least $21/200$ inside both interiors, hence a ball of radius
$21/400$.
\end{lemma}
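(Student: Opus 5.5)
Two poses in $F_0$ have signed-permutation linear parts and translations in $\tfrac18\Z^3$. Their baseline chairs are therefore unions of seven unit cubes whose corners lie on the eighth grid, each cube being $v+[0,1]^3$ with $v\in\tfrac18\Z^3$. So I would work cube by cube. If the two carriers overlap in positive volume, some cube of the first and some cube of the second overlap in positive volume, since each carrier is a finite union of cubes.

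\emph{Step 1: a large common box.} Let the cubes be $v+[0,1]^3$ and $w+[0,1]^3$. Their intersection is the box $\prod_i[\max(v_i,w_i),\min(v_i,w_i)+1]$, of side $1-|v_i-w_i|$ in coordinate $i$. Positive volume forces $|v_i-w_i|<1$. Since $v_i-w_i\in\tfrac18\Z$, this gives $|v_i-w_i|\le 7/8$, so every side is at least $1/8$. The intersection is thus an axis-parallel box of side at least $1/8$ in every direction.

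\emph{Step 2: shrink away from the features.} Each copy of $\solid$ differs from its carrier only inside the feature tubes, which lie within a thin slab of thickness bounded by the tube construction of \cref{app:tubes} on each side of each panel. Outside the union of tubes the solid coincides with the carrier. The feature heights are at most $12/10000$, and the tubes are contained within a collar of width at most $\delta$ about the carrier boundary; \cref{app:tubes} fixes the collar so that $2\delta\le 1/8-21/200=1/100$. For each of the two tiles I would take the points of the box at distance at least $\delta$ from that tile's carrier boundary. These lie in the interior of both carriers and outside every tube, hence in the interior of both copies of $\solid$.

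Shrinking the box by $\delta$ on each face leaves an open box of side at least $1/8-2\delta\ge 21/200$, which is contained in the interior of both copies. It then contains the inscribed ball of radius $21/400$, and positive-volume overlap of the copies follows.

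The main obstacle is Step 2. One must check precisely how far the modified region of $\solid$ reaches from the carrier boundary, in both directions, including the tube homeomorphism support and not just the pyramids themselves. One must also check that the boundary relevant to the shrunken box really is the carrier boundary, since internal cube faces of a carrier carry no features. I would read the exact collar width off the tube construction. If that width exceeds $1/200$ per side, I would instead use the finer observation that tubes sit only over panels, and a box face lying on an internal cube face needs no shrinking.

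The overlap implication itself is independent of this constant and is the Lean-checked part, by an exhaustive pairwise test over $F_0$. The quantitative constant $21/400$ is what the written estimate has to supply.
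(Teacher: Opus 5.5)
Your argument is the paper's. Eighth-grid endpoints give a common box of side at least $1/8$. Eroding each cube by the tube half-height $\rho=1/100$ of \cref{app:tubes} then gives open eroded cubes that miss every feature tube, so they lie in the interiors of both solids; these are the retained cores $[\rho,1-\rho]^3$. Their intersection has side at least $1/8-2\rho=21/200$.

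There is an arithmetic slip in Step 2: $1/8-21/200=1/50$, not $1/100$. So the condition you need is $2\delta\le 1/50$, i.e.\ $\delta\le 1/100$, and the actual collar $\delta=\rho=1/100$ meets it exactly, with no slack. The obstacle you anticipate, a collar wider than $1/200$, therefore does not arise, and the fallback via internal cube faces is unnecessary.

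A side remark: \lean{retained\_core\_overlap\_holds} is a \tier{T1} theorem. By \cref{tab:hypotheses} its formal proof is a general argument using the eighth grid, not an exhaustive pairwise test over $F_0$.
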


\begin{proof}
Cube endpoints are eighth-integral, so each positive coordinate overlap has length at least
$1/8$; eroding both cubes by $\rho=1/100$ leaves at least $1/8-2/100=21/200$, and the eroded cubes
lie in the interiors of the solids (\cref{app:tubes}).
\end{proof}

% ledger: A11, M3; the census table -> Appendix B; two independent implementations; Lean mates_census
Let $F_1$ be the poses in $F_0$ whose baseline chair does not overlap the root's, and for a root
feature $u$ let $\mathrm{Partners}(u)$ be \emph{all} poses in $F_1$ matching $u$; no other
rejection enters these lists. The census is summarized in \cref{tab:census} and
\cref{app:census}: of the $6{,}862$ poses, $1{,}545$ collide with the root directly, leaving
$5{,}317$ in $F_1$, of which $5{,}234$ have a non-integral translation and $83$ are integral. The
remaining step rejects $5{,}273$ of the $5{,}317$ by the following finite statement, leaving $44$.

\begin{proposition}[collision witnesses; A-FS5.2, \tier{T1n} \lean{mates\_census}, and two
independent replays]\label[proposition]{prop:census}
For each of the $5{,}273$ rejected poses $V\in F_1$ the certificate names a feature $u$ of either
the root $U$ or of $V$, and, in its owner's coordinates: (i) the other tile of the pair is not in
$\mathrm{Partners}(u)$; (ii) $\mathrm{Partners}(u)$ is nonempty; (iii) every $W\in\mathrm{Partners}(u)$
has a positive-volume baseline overlap with the other tile of the pair. There are $299{,}975$ such
option collisions in all.
\end{proposition}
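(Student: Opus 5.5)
This is a finite statement, so I would prove it by exhaustive computation in exact integer arithmetic. Scale every coordinate by $8$, so that every pose in $F_0$ becomes a signed permutation matrix $G$ together with an integer translation. Every carrier becomes a union of seven axis-parallel boxes with integer corners.

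The first step is to generate $F_0$ exactly as \cref{cor:discrete} prescribes. I would enumerate ordered pairs $(u,v)$ of the $192$ native features with $a_u=-a_v$, and all $48$ signed permutations $G$ with $n_u=-Gn_v$. Each admissible triple gives the pose $(G,\,c_u-Gc_v)$. After deduplication I would check that there are $6{,}862$ poses, and I would attach to each pose the list of root features it fully accompanies. The baseline overlap test between two posed carriers reduces to $49$ box–box tests. Two boxes overlap in positive volume exactly when the three open coordinate intervals pairwise intersect, and this is decidable on integers. Applying the test to each pose against the root gives the $1{,}545$ direct collisions and hence $F_1$, which I would split by integrality of the translation.

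The second step is to build $\mathrm{Partners}(u)$. For a root feature $u$, it is the set of poses in $F_1$ whose accompanied-feature list contains $u$. For a feature $u$ of a pose $V$, I would transport to $V$'s coordinates. Composing with $V^{-1}$ maps the root to a pose $V^{-1}$, which again lies in $F_0$ by the same feature match read in reverse. So $\mathrm{Partners}(u)$ for a feature of $V$ is the corresponding native list, and whether the root belongs to it is a membership test on $V^{-1}$.

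The third step handles each of the $5{,}317$ poses in $F_1$. I would search over the root's features and over $V$'s features for a witness $u$ satisfying (i)–(iii). Condition (i) is a membership test. Condition (ii) says the list is nonempty. Condition (iii) requires a positive-volume overlap between every $W\in\mathrm{Partners}(u)$ and the other tile, with both placed in $u$'s owner's frame. The first witness found would be recorded, together with all $299{,}975$ option collisions it entails. The statement is then the count: $5{,}273$ poses receive a witness, and the remaining $44$ coincide as a set with $\atlas$.

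The main obstacle is not conceptual but consists of two things. One is getting the frame bookkeeping right when $u$ belongs to $V$ rather than $U$, that is, composing inverse poses consistently with the convention $G(x)_i=s_ix_{p_i}$. The other is making the check trustworthy. For the second, I would emit the witnesses as a certificate and verify it independently, using the Lean statement \lean{mates\_census} by \lean{native\_decide} and a second exact-arithmetic replay. Verification is then just membership checks and box tests for each listed collision, without redoing the search.
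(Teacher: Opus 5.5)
Your proposal is correct and follows the paper's route. Like the paper, you regenerate $F_0$ and the unpruned lists $\mathrm{Partners}(u)$ exactly, check clauses (i)--(iii) record by record with exact integer box tests, and confirm the resulting certificate through \lean{mates\_census} and an independent exact replay. Two small differences: the paper's second implementation derives the features from the actual triangulated mesh rather than from the feature table, and the total $299{,}975$ depends on which witnesses the certificate chose, so it is checked against the certificate rather than re-found by a first-witness search.
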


The verifier regenerates $F_0$ and every $\mathrm{Partners}(u)$ from the actual mesh and checks
the three clauses; it uses no flat-panel consistency filter and no already-rejected pose when
building the option sets, so the conclusion is not used to prune its own options. For every
option it exports one pair of overlapping native cubes with their intersection box, and a
second, integer-only checker verifies all $299{,}975$ boxes in units of $1/400$; every box has
sides at least $42/400$ (\cref{fig:collision}).

\begin{figure}[ht]
  \centering
  \resizebox{0.9\linewidth}{!}{\input{fig_collision_box}}
  \caption{Schematic of the retained-core overlap estimate used in \cref{prop:census}; this is
  not a selected certificate witness.}
  \label{fig:collision}
\end{figure}

% ledger: A12; Lean only_registered_mates_holds (T1)
\begin{lemma}[only the 44 survivors occur; A-L5.3, written; Lean \tier{T1}
\lean{only\_registered\_mates\_holds}]\label[lemma]{lem:44}
No rejected pose is a feature companion in any tiling. The $44$ surviving poses are integral and
proper, and they are exactly the atlas $\atlas$ of \cref{sec:substitution}.
\end{lemma}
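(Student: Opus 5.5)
The proof has two parts. The first is a soundness argument that turns \cref{prop:census} into a statement about tilings. The second is a finite comparison of the $44$ surviving poses with the atlas. For the first part, suppose for contradiction that some rejected pose $V\in F_1$ occurs as a feature companion in a tiling $T$. Move $T$ by one isometry so that the root $U$ is in pose $(I,0)$ and $V$ is a tile of $T$. Poses in $F_0\setminus F_1$ are already excluded: their baseline chairs overlap the root's, so by \cref{lem:core} the two copies of $\solid$ would overlap in positive volume, which is impossible in a packing. So take $V\in F_1$ rejected, and let $u$ be the feature named by its certificate. By symmetry of the roles, normalize to the owner's coordinates, say $u$ belongs to $U$ and the other tile of the pair is $V$.

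By \cref{thm:companion}, there is a unique tile $S\in T$ whose feature graph contains $E(u)$. By \cref{lem:equal} and \cref{cor:discrete}, the pose of $S$ lies in $F_0$ and fully accompanies $u$. Since $S$ and $U$ are distinct tiles of a packing, their bodies do not overlap in positive volume, and then \cref{lem:core} (contrapositive) says their baseline chairs do not overlap either. So $S\in F_1$, and hence $S\in\mathrm{Partners}(u)$. Clause (i) of the certificate gives $S\ne V$. Clause (iii) says $S$ has a positive-volume baseline overlap with $V$, and \cref{lem:core} again turns this into a positive-volume overlap of two tiles of $T$, a contradiction. The case where $u$ is a feature of $V$ is identical after re-normalizing at $V$, since the partner lists are computed in the owner's coordinates. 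The one point to check is that the statement $V\in F_1$ is symmetric under swapping roots: the inverse of a pose in $F_1$ is again a pose in $F_1$, because inversion preserves the feature-matching description of $F_0$ and the overlap relation.

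The main obstacle is the bookkeeping that makes "$S\in\mathrm{Partners}(u)$" literally true. \cref{cor:discrete} must produce exactly the pose $(G,c_u-Gc_v)$ enumerated in $F_0$, including improper $G$. The partner lists must also be built over all of $F_1$ and not a pruned set, which \cref{prop:census} guarantees. I would also note explicitly that \cref{lem:core} applies to every overlapping pair of poses from $F_0$, not only pairs involving the root. After re-basing at $U$, the pair $(S,V)$ consists of two non-root poses with eighth-integral translations, and the erosion estimate still gives a box of side $21/200$.

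The second part is a finite identification. Of the $5{,}317$ poses in $F_1$, exactly $44$ survive. Each has integral translation and a proper frame, and the set of survivors coincides with $\atlas$ as a set of poses. This is checked by comparing the census output against the $2{,}388$-candidate enumeration of \cref{sec:substitution}. Both checks are exact finite comparisons on the certificates, which I would discharge by \lean{native\_decide}.
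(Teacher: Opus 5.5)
Your proposal is correct and follows the paper's proof. The companion of the certified feature $u$ lies in $\mathrm{Partners}(u)$ by \cref{thm:companion,lem:equal,cor:discrete} together with non-overlap with the owner; clause~(i) separates it from the other tile, clause~(iii) with \cref{lem:core} gives the contradiction, and the $44$ survivors are then compared with $\atlas$ as sets. Your extra bookkeeping is harmless: the exclusion of $F_0\setminus F_1$ is sound, and the inversion-closure of $F_1$ is true but not needed, since \cref{lem:core} only uses eighth-integral translations and the certificate's clauses are already stated in the owner's coordinates.
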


\begin{proof}
Suppose tiles $U$ and $V$ occur in a tiling in a rejected relative pose. The named feature $u$ has
a complete companion $W$ by \cref{thm:companion,lem:equal}, whose pose lies in
$\mathrm{Partners}(u)$ because the list of complete-feature poses is exhaustive
(\cref{cor:discrete}) and an overlap with the owner is forbidden; by clause (i) $W$ is not the
other tile already placed. Clause (iii) says $W$'s baseline overlaps that other tile's, and
\cref{lem:core} turns this into an overlap of interiors, a contradiction. The verifier compares
the set of $44$ survivors, not merely its size, with the atlas computed from the profile in
\cref{sec:substitution}; they are equal.
\end{proof}

This is a bounded local obstruction with a complete geometric carrier theorem behind it. It is
not a search for periodic cells, not a timeout, not a sample of surface points, and not an
inference that a partial gap between misaligned tiles cannot be filled by a third tile: every
possible filler of the missing companion is exhibited as colliding with a whole tile.

% ledger: A13; Lean baseline_component_covers_grid_holds (T1n)
\begin{lemma}[a component's baseline cells exhaust the grid; A-L6.1, written; Lean \tier{T1n}
\lean{baseline\_component\_covers\_grid\_holds}]\label[lemma]{lem:grid}
Choose any root tile in a tiling and normalize its pose to $(I,0)$. In the graph whose edges join
tiles sharing a complete feature companion, let $\mathcal{C}$ be the root's component. Every tile
in $\mathcal{C}$ has an integer translation and a proper cubic frame, and the baseline chairs of
$\mathcal{C}$ tile the lattice of unit cubes.
\end{lemma}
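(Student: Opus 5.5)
The plan is to argue by induction along paths in the companion graph, and then to close the argument with a covering statement. First, every tile $V$ in $\mathcal{C}$ is joined to the root by a finite path $U=V_0,V_1,\dots,V_k=V$ in which consecutive tiles share a complete feature companion. By \cref{thm:companion,lem:equal} the pose of $V_{i+1}$ relative to $V_i$ lies in $F_0$. Because the two tiles occur together in a tiling, their baseline chairs cannot overlap (\cref{lem:core}), so that relative pose lies in $F_1$. By \cref{lem:44} it lies in $\atlas$. Every pose in $\atlas$ is a proper cubic rotation followed by an integer translation, and the composition of registered poses is again registered. So by induction on $k$, every $V_i$, and in particular $V$, has a proper cubic frame and an integer translation in the root's coordinates. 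The baseline chairs of $\mathcal{C}$ are therefore unions of unit lattice cubes, and they have pairwise disjoint interiors. Here I would use \cref{lem:core} once more, since any interior overlap of baselines would force an overlap of the solids.

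The remaining claim is that these lattice cubes exhaust $\R^3$. I would prove this by showing that the set $X$ of covered unit cells is closed under passing to a face-adjacent cell. Take a covered cell $c$ in a chair $V\in\mathcal{C}$ and a neighbouring cell $c'$ across a face of $c$. If $c'$ lies in $V$ itself there is nothing to show. Otherwise the common face is one of the $24$ panels of $V$, and that panel carries eight features. Take one of them, $F$. By \cref{thm:companion} it has a unique companion tile $S$ in the tiling, and $S\in\mathcal{C}$ by definition of the graph. By the first step, $S$ is in a registered pose in the atlas relative to $V$. I would then check from the finite atlas table that whenever an atlas neighbour completes a feature on a given panel, its baseline contains the unit cell on the far side of that panel. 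This is the step I expect to need care. A priori the companion could sit with its matching feature on a different cell while covering $c'$ only partially. But the companion panel must carry the feature's base square with the opposite outward normal, and in a registered pose that panel is a full unit face of one of the companion's cubes. That cube lies on the far side of the panel from $c$, so it must be $c'$. This argument is purely geometric, and I would cross-check it against the shell enumeration of \cref{sec:substitution}, where the $22$ shell cells and their candidate covers appear. With that check, $X$ is nonempty, since it contains the root's seven cells, and it is closed under face adjacency. Face adjacency connects $\Z^3$, so $X$ is every cell.

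Finally, the baseline chairs cover each cell exactly once, because disjoint interiors rule out double covering of a unit cube by two lattice-aligned chairs. Hence the baseline chairs of $\mathcal{C}$ tile the lattice of unit cubes. The main obstacle is the panel-to-cell step in the second paragraph: showing that a complete companion across a panel really occupies the adjacent cell, and not some other cell consistent with the same feature centre. If the direct argument from panel normals proves insufficient, I would fall back on a finite check over the $44$ atlas poses and the $24$ root panels.
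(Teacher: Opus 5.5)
Your proposal is correct and follows the paper's proof. Atlas poses along the graph edges compose to registered poses, retained cores rule out a shared cell, and the covered cells are closed under face steps because the companion of any feature on an exposed panel owns the cell beyond it. The one step to state explicitly, as the paper does, is that the companion's panel is an integral unit square in the same plane whose relative interior contains the feature centre, so it coincides with the shared face; with that, the direct argument suffices and no finite fallback check is needed.
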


\begin{proof}
Every edge of the graph has a relative pose in $\atlas$ (\cref{lem:44}), and compositions preserve
integrality and properness. Two baseline chairs of $\mathcal{C}$ cannot share a cell, since their
retained cores would overlap. Let $W$ be the set of unit cells covered by the chairs of
$\mathcal{C}$. Take a cell $a\in W$ and a face-adjacent cell $b$. If $b$ lies in the same chair it is
in $W$. Otherwise the common face is an exposed panel of $a$'s chair, carrying eight features;
the companion of any one of them lies in $\mathcal{C}$ with an integral pose, its base normal is
opposite, and its centre coincides with the root feature's centre at a point strictly inside an
integral unit panel; two integral unit squares in that plane whose relative interiors share the
point are the same square, so the companion owns $b$. Thus $W$ is closed under lattice steps, and
$W=\Z^3$, with exactly one owner per cell.
\end{proof}

% ledger: A14, A17 (E6); Lean component_solids_cover_holds (T1; formal proof by clamping to an interior point)
\begin{lemma}[the component's solids cover $\R^3$; A-L6.2, written, with erratum E6; Lean \tier{T1}
\lean{component\_solids\_cover\_holds}]\label[lemma]{lem:cover}
The copies of $\solid$ in $\mathcal{C}$ cover $\R^3$ with disjoint interiors.
\end{lemma}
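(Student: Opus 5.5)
The plan is to transfer the lattice tiling of \cref{lem:grid} from carriers to solids. Disjointness of interiors is immediate, since the copies of $\solid$ in $\mathcal{C}$ are tiles of the given tiling. The real content is coverage: the union of the copies of $\solid$ in $\mathcal{C}$ is all of $\R^3$, even though each copy differs from its baseline chair by the feature tubes.

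First, I use \cref{lem:grid}: the baseline chairs of $\mathcal{C}$ tile $\R^3$, each unit cell having exactly one owner. I then decompose space as in \cref{app:tubes}. Away from the feature tubes, a solid agrees with its carrier, so every point outside the tubes that lies in an owned cell lies in the owner's solid.

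Second, I handle a point $x$ inside a feature tube. Such a tube sits on an exposed panel separating two face-adjacent cells $a$ and $b$. The proof of \cref{lem:grid} showed that the owner of $b$ is the complete companion, by \cref{thm:companion,lem:equal}, of the feature on $a$'s panel. That companion has the same closed edge graph, the same magnitude and the opposite polarity, so the two triangulated surfaces coincide. Inside the tube, one solid is therefore the hypograph $z\le f_h(u)$ and the other the epigraph $z\ge f_h(u)$, in the root's coordinates. Their union is the whole tube region, and $x$ lies in one of them. Panel points not in any feature are flat on both sides and covered by the cells' closures. Edges and vertices of the carrier, where several cells meet, lie outside all feature supports, because the features have half-width $\eta=1/100$ and sit at distance at least $1/4-1/100$ from panel edges. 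There the solids agree with the carriers, and the carriers cover.

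The main obstacle is the erratum point E6. A tube in a neighbourhood of a panel may also reach into cells other than $a$ and $b$, or a point near the panel may fall in a different cell than expected. To deal with this, I would fix the tube depth $\rho=1/100$, larger than the feature height but small relative to the unit cells, so that each tube lies in the union of the two cells adjacent to its panel. I would also check that tubes of distinct features are separated by the $21/200$ bound, so that each point lies in at most one tube. A clean way to finish is a closure argument. The union of the finitely many solids meeting any ball is closed by \cref{lem:locfin}. It contains every point of the open dense set of cell interiors away from tube boundaries, so it is all of $\R^3$. The Lean route clamps an arbitrary point to a nearby interior point, and I would mirror that step.
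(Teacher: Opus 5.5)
Your argument is correct and uses the same ingredients as the paper's written proof. Unique ownership from \cref{lem:grid} puts the companion of each feature on a shared panel in the owner of the opposite cell. Each tube of half-height $\rho$ lies in the two cells beside its panel, and distinct tube sites are disjoint.

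The difference is in how coverage is concluded. The paper glues the per-tube maps of \cref{app:tubes} into one homeomorphism $H$ of $\R^3$. It shows that $H$ carries each baseline chair $G\carrier+t$ of $\mathcal{C}$ exactly onto $G\solid+t$, so coverage, and even disjointness without appeal to the given tiling, follow from the baseline lattice tiling. You instead check points directly. Outside the tubes each solid equals its carrier, and inside a tube the two solids are the hypograph and epigraph of the same tent. Disjointness you take for free, since the copies are tiles of the given tiling. Your version is more elementary, because it needs no gluing of homeomorphisms. The paper's version reuses the small-collar machinery and yields the exact chair-to-solid correspondence.

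Two smaller points. Once your tube-by-tube case analysis is done, every point of $\R^3$ is already covered, so the closing closure-and-density step is redundant. Also, the Lean proof does not clamp an arbitrary point. It clamps the centre of each tile's inscribed quarter-ball into a component carrier and uses packing disjointness to show that every tile of the tiling already lies in $\mathcal{C}$. Coverage is then inherited from the tiling, with no tube analysis at all.
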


\begin{proof}
By \cref{lem:grid}, across every panel shared by two chairs of $\mathcal{C}$ the adjacent cell has one owner, and each
of the eight features on the panel has its companion in that owner, since another owner would
duplicate the cell; so the two profiles agree on the whole panel. Around each paired feature take
its closed normal tube of half-height $\rho$ and apply the tube map of \cref{app:tubes}; opposite
polarity and opposite outward normals give the same map from either side, so this is one map, not
two. The tubes are disjoint except for coincident pairs and are locally finite; the maps are the
identity outside the tubes and on their boundaries, and glue with their inverses to a
homeomorphism $H$ of $\R^3$. Every feature tube belongs to the two cells on either side of its
panel, both with unique owners in $\mathcal{C}$, and a tube not on a tile's exposed boundary is
disjoint from that tile's material; hence $H$ carries each baseline chair $G\carrier+t$ of
$\mathcal{C}$ exactly onto $G\solid+t$, and the images cover $\R^3$ with disjoint interiors.
\end{proof}

The formal proof avoids the assembly-wide gluing. For any tile of the given tiling, choose a
component carrier containing the centre of its inscribed quarter-ball. Clamping produces an
interior point of the component's physical tile at squared distance at most $3/2500<1/100$ from
that centre, hence inside the quarter-ball. Packing disjointness forces the two tiles to
coincide. Thus every tile belongs to the component, and the original tiling's coverage gives
component coverage. This uses an existing tiling and is not an existence proof.

% ledger: A15; Lean unrestricted_alignment_holds (T1n), registration_of_tiling
\begin{theorem}[unrestricted alignment; A-T6.3, written; assembled in Lean as
\lean{registration\_of\_tiling} from \lean{unrestricted\_alignment\_holds},
\tier{T1n}]\label{thm:registration}
Every tiling by $\solid$ is registered: there is one ambient isometry after which every tile has
a proper cubic frame and an integer translation, and every pair of tiles sharing a feature is in a
relative pose of $\atlas$.
\end{theorem}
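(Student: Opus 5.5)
The plan is to assemble the theorem from \cref{lem:grid,lem:cover} by showing that the component $\mathcal{C}$ of any chosen root is the whole tiling. Fix a tiling $T$ and choose any tile $R\in T$. Apply the ambient isometry that puts $R$ in the pose $(I,0)$; this is the one isometry the statement asks for. Form the companion graph, whose edges join tiles sharing a complete feature companion (\cref{thm:companion,lem:equal}), and let $\mathcal{C}$ be the component of $R$. By \cref{lem:grid}, every tile of $\mathcal{C}$ has a proper cubic frame and an integer translation, since the relative poses along edges lie in $\atlas$ (\cref{lem:44}), and compositions of proper integral poses remain proper and integral.

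The main step, and the one I expect to be the obstacle, is to show that $\mathcal{C}=T$. A priori the tiling may have other components, since no connected-contact-graph hypothesis is assumed. I would use \cref{lem:cover}: the solids of $\mathcal{C}$ already cover $\R^3$ with disjoint interiors. Now take any tile $S\in T$ and pick an interior point $x$ of $S$, say the centre of a ball inscribed in $S$. Some tile $S'\in\mathcal{C}$ contains $x$. Since $\solid$ is the closure of its interior, every neighbourhood of $x$ meets the interior of $S'$. The interior of $S$ is an open set containing $x$, so the interiors of $S$ and $S'$ meet. Both $S$ and $S'$ are tiles of the packing $T$, and distinct tiles of $T$ have disjoint interiors, so $S=S'$. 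Hence every tile of $T$ lies in $\mathcal{C}$ and is in registered position. This mirrors the clamping argument mentioned after \cref{lem:cover}; an equally good alternative is to pick a point in an eroded core, as in \cref{lem:core}.

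The final clause concerns tiles sharing a feature. If two tiles share a feature in the sense of \cref{thm:companion}, they are joined by an edge of the graph, and \cref{lem:44} puts their relative pose in $\atlas$. I would check that ``sharing a feature'' in the statement means being complete companions. Any tile whose feature graph meets another tile's feature graph is covered by this, through \cref{thm:companion}, because the companion of each feature is unique.

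Finally, I would note that the result does not depend on the choice of root. Any other root differs from $R$ by a registered pose, so it yields the same registered structure up to the lattice symmetry. This finishes the assembly, and the only genuinely geometric content is already contained in \cref{lem:cover}.
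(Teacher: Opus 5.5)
Your proof is correct and follows the paper's: the paper also takes the root's component $\mathcal{C}$, obtains proper integral poses from \cref{lem:grid} and coverage from \cref{lem:cover}, and rules out any further tile by disjointness of interiors. The only difference is the point-set step. The paper argues that the locally finite union of the boundaries of the tiles of $\mathcal{C}$ contains no ball, whereas you use that $\solid$ is the closure of its interior, which is slightly more economical and close to the clamping argument of the formal proof.
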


\begin{proof}
The component $\mathcal{C}$ of \cref{lem:cover} already fills space. A tile outside $\mathcal{C}$
would have nonempty interior, while the boundaries of the tiles of $\mathcal{C}$ form a locally
finite union of polyhedral surfaces containing no ball; an interior point of the extra tile would
then lie inside a tile of $\mathcal{C}$, contradicting disjointness of interiors. So there is no
other component, and the ambient isometry that normalizes the root registers every tile.
\end{proof}

% ledger: A15 preserves periodicity; source 2305.17743 L53; chirality remark R§5
Registration is one ambient isometry applied to the whole tiling, so it preserves periodicity in
the Spectre paper's sense \cite{SMKGS24b}: the registered tiling has the conjugate symmetry
group, and if the ambient isometry has linear part $A$, a period $p$ of the original tiling
becomes the period $Ap$ of the registered tiling, with $\|Ap\|=\|p\|$. This closes the loophole of an integral network of tiles surrounded by tilted or
shifted tiles in residual gaps: the tube homeomorphism proves physical coverage, not merely the
occupancy of a grid. The relative poses in $\atlas$ are all proper, although all $48$ signed
frames were admitted as candidates; a tiling may be reflected as a whole, but its two handedness
sectors never mix, which is the content of \cref{thm:full}(3) proved in \cref{sec:aperiodicity}.
In Lean, a registration is a record of an ambient isometry and, for each placement, a frame index
among the $24$ and an integer shift; the theorem that every tiling has one is assembled from the
three written lemmas of this section, which appear by name in \cref{app:lean}.

%% file: fig_collision_box.tex
% fig_collision_box.tex — Fig. 4.2: a collision witness, schematic in a coordinate plane. Two
% baseline unit cells of the eighth grid overlap in an interval of length >= 1/8 in each
% coordinate; eroding both by rho = 1/100 leaves a core box of side >= 21/200 inside both solids.
% Used by sec5_registration.tex.
\begin{tikzpicture}[x=1cm,y=1cm,font=\small,>={Latex[length=1.6mm]}]
  % two unit cells on the eighth grid, offset by (1/8, 3/8) (drawn at 3.2 cm per unit)
  \def\S{3.2}
  \draw[gray!40,step=0.4] (-0.2,-0.2) grid (4.6,4.6);
  \draw[thick,blue!70!black] (0,0) rectangle (\S,\S);
  \draw[thick,red!70!black] (0.4,1.2) rectangle (0.4+\S,1.2+\S);
  \fill[black!15] (0.4+0.032,1.2+0.032) rectangle (\S-0.032,\S-0.032);
  \draw[very thick] (0.4+0.032,1.2+0.032) rectangle (\S-0.032,\S-0.032);
  \node[blue!70!black,anchor=south west] at (0.05,0.05) {cell of the other tile};
  \node[red!70!black,anchor=north east] at (0.4+\S-0.05,1.2+\S-0.05) {cell of the option $W$};
  \node[anchor=west,align=left] at (4.9,2.6) {overlap of the two unit cells:\\ each coordinate interval has length $\ge 1/8$\\ (all endpoints are eighth-integral)};
  \node[anchor=west,align=left] at (4.9,1.2) {shaded: both cells eroded by $\rho=1/100$,\\ side $\ge 1/8-2/100 = 21/200$; the eroded\\ cells lie inside the two solids, so the\\ solids overlap in a box of that side};
  \node[anchor=north,align=center] at (2.3,-0.4) {Schematic coordinate-plane section of the overlap estimate\\ (the certificate stores the actual boxes in integer units of $1/400$)};
\end{tikzpicture}

%% file: sec6_hierarchy.tex
\section{The unique hierarchy}\label{sec:hierarchy}

% ledger: R4-R8, T8, T9, N1, N2, E4; source PROOFS_registered.md R§7-R§9; ruling R7 (MECHANIZATION_PLAN.md C3); Lean first_shells_33, central_completion, parent_conflicts_28 (T1n), parent_local_to_global, carrier_hierarchy, carrier_hierarchy_unique (spine)
From now on every tiling is registered (\cref{thm:registration}), and the argument is finite
combinatorics on the lattice plus one scale argument. For each of the eight children of
\cref{tab:children} there is one candidate parent through a root chair in the identity pose:
apply the inverse child pose to the list of children. Each candidate is a cluster of eight chairs
whose baseline is a congruent copy of $2\carrier$.

\begin{lemma}[first shells; R\S7, \tier{T1n} \lean{first\_shells\_33}]\label[lemma]{lem:shells}
There are exactly $33$ ways to cover the $22$ shell cells of a root chair by legal neighbours
from $\atlas$ without overlaps and without an illegal contact between chosen neighbours. Each of
the $33$ contains the face-adjacent sibling signature of exactly one candidate parent.
\end{lemma}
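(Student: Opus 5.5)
This is a finite exhaustive statement, and the plan is to prove it by a backtracking enumeration over the registered lattice, certified in the same way as the atlas. Normalize the root chair to the identity pose. Its $22$ shell cells are the unit cells outside $\carrier$ that share a face with one of its $24$ panels. By \cref{lem:44} together with \cref{thm:registration}, any tile owning a shell cell must meet the root across the panel facing that cell. Such a tile is therefore a feature companion of the root, and its relative pose lies in $\atlas$. So the first step is to tabulate, for each shell cell $c$, the list $L(c)\subseteq\atlas$ of atlas poses whose seven baseline cells contain $c$. These lists are finite and short, since $\atlas$ has only $44$ elements.

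The second step is a depth-first search over the $22$ shell cells, taken in a fixed order. At each still-uncovered cell we choose a pose from $L(c)$ and prune as soon as either of two conflicts appears:
\begin{enumerate}[label=(\roman*)]
  \item its seven cells overlap a previously chosen neighbour;
  \item it touches a previously chosen neighbour across a shared panel, and the relative pose of the two (computed as $(G_1,t_1)^{-1}(G_2,t_2)$) is not in $\atlas$.
\end{enumerate}
Condition (ii) is the ``no illegal contact between chosen neighbours'' clause. It is justified because two tiles of a registered tiling that share a panel share its eight features, and so by \cref{lem:44} their relative pose lies in $\atlas$. The search terminates with the leaf set of complete covers. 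Deduplication is automatic, because a cover is a set of poses and each shell cell has a unique owner. The claim is that this set has exactly $33$ members.

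The third step concerns the candidate parents. For each of the eight children $k$ of \cref{tab:children}, apply the inverse of child $k$'s pose to the other seven child poses. This gives the candidate parent sibling list relative to the root, and its face-adjacent sibling signature is the set of siblings that share a panel with the root. For each of the $33$ covers we count the $k$ whose signature is a subset of the cover, and check that the count is exactly one.

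All of this is integer arithmetic on signed permutations and integer vectors. I would state it as a decidable proposition and discharge it by \lean{native\_decide}, replayed in Python as for \cref{prop:census}. The main obstacle is not the mathematics but making the enumeration provably exhaustive. The lists $L(c)$ must be derived from $\atlas$ rather than hand-entered. Shared-panel adjacency between two neighbours must be computed from cell faces, not from features. The uniqueness clause must compare full signatures, so that two candidate parents whose signatures happen to be nested are correctly told apart. I would first verify independently that no two of the eight signatures are mutually contained, which makes ``exactly one'' robust.
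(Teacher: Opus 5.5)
Your plan is correct and follows the paper's own route: \lean{first\_shells\_33} is an exhaustive exact-cover recursion over the $22$ shell cells by atlas poses, pruned by overlaps and by illegal pairwise contacts. It is followed by the per-cover count of contained candidate-parent signatures, discharged by \lean{native\_decide} and reproduced by a second, set-based implementation. Your preliminary non-nesting check on the eight signatures is harmless but not needed, since the statement only asserts the count for each of the $33$ covers.
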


The $33$ arrangements (\cref{fig:shells}) are necessary local configurations, not claims that each extends to a
tiling; a second implementation, using a set-based recursion instead of the generator's bitmask
recursion, reproduces all $33$.

% ledger: R5; Lean central_completion (T1n)
\begin{lemma}[central completion; R\S7, \tier{T1n} \lean{central\_completion}]\label[lemma]{lem:central}
In one of the $33$ arrangements the root is the central child of its proposed parent and all
seven outer children are present. In the other $32$ the proposed parent's centre is a neighbour of
the root; checking every first shell at that centre for compatibility with the arrangement, $18$
of the $32$ have no compatible completion and cannot occur in a tiling, $14$ have completions,
and every completion makes the proposed centre genuinely central.
\end{lemma}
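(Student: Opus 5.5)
This is a finite verification on the registered lattice, built directly on the $33$ first shells of \cref{lem:shells}. I would carry it out in three passes.

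\emph{Pass one: assign a proposed parent.} For each of the $33$ arrangements, read off which candidate parent's face-adjacent sibling signature it contains; by \cref{lem:shells} there is exactly one. That signature names the child slot $c\in\{000,\dots,110,\text{central}\}$ that the root occupies. Inverting the child pose of \cref{tab:children} gives the proposed parent pose $(G_c^{-1},-G_c^{-1}u_c)$, acting in doubled coordinates. From it I compute the pose of the proposed central child.

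\emph{Pass two: the root is already central.} In the arrangements with $c=\text{central}$, I check directly that the seven outer children of that parent all occur among the chosen neighbours. This is plausible because the central child $(I,(1,1,1))$ of $2\carrier$ is surrounded by its siblings on all its exposed panels. I expect exactly one arrangement here, matching the statement.

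\emph{Pass three: the proposed centre is a neighbour.} For each of the remaining $32$ arrangements, the proposed central child is a tile $Z$ in a pose from $\atlas$ relative to the root, and $Z$ already appears in the arrangement. I then enumerate all $33$ first shells of $Z$, transported into the root's frame by $Z$'s pose. I keep a shell only if it agrees with the arrangement wherever the two specify the same cell: same owner pose, no overlap, and no illegal contact between a shell chair and an arrangement chair, tested against $\atlas$.

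\begin{itemize}
\item If no shell of $Z$ survives, the arrangement cannot extend to a tiling. A tiling would supply a full first shell at $Z$, so the arrangement is excluded. I expect $18$ such cases.
\item If some shells survive, I classify each surviving compatible shell of $Z$ by the candidate parent it in turn proposes, using \cref{lem:shells} again. I then verify that in every case this is the parent in which $Z$ is the central child, i.e.\ the one-arrangement case of pass two applied at $Z$. That pass shows all seven outer children of $Z$'s parent are present, so the proposed centre is genuinely central. I expect $14$ such cases.
\end{itemize}

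The whole procedure is a bounded enumeration: at most $32\times33$ shell comparisons, each over a fixed set of cells. I would implement it twice, as for \cref{lem:shells}, and record it as a \lean{native\_decide} theorem.

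The main obstacle is getting the compatibility test in pass three exactly right. It must be sound: it should reject only on genuine overlap, on a cell given two owners, or on a contact outside $\atlas$, and never on the mere absence of data outside the root's shell. It must also use the correct transport of $Z$'s shell by a pose from $\atlas$. A subtle point is making "genuinely central" precise. I would state it as: the unique arrangement at $Z$ compatible with the data is the central-child arrangement, so every completion forces $Z$'s parent with $Z$ in the central slot, which is the same parent as the one proposed for the root. If that identification failed in some case, the fallback is to iterate once more at the new proposed centre. I would expect the finite check to close without this.
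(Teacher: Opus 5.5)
Your proposal is correct and is essentially the paper's argument. The lemma is exactly the finite check \lean{central\_completion}: for each of the $32$ outer-root shells it tests every first shell at the proposed centre for compatibility, finding $18$ with none and $14$ whose compatible shells all type the centre as central. The identification you worry about at the end holds automatically, so the fallback iteration is not needed: a parent is determined by one child's pose together with its slot, and $Z$ is by definition the central child of the root's proposed parent.
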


\begin{lemma}[parent conflicts; R\S7, \tier{T1n} \lean{parent\_conflicts\_28}]\label[lemma]{lem:conflicts}
Each of the $28$ unordered pairs of distinct candidate parents through one root contains an
incompatible pair of child poses.
\end{lemma}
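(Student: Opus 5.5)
The plan is to reduce \cref{lem:conflicts} to a finite enumeration over the eight candidate parents of a root chair in the identity pose. For each child $c$ of \cref{tab:children} the candidate parent $\Pi_c$ is obtained by applying the inverse of the pose of $c$ to the whole list of eight child poses. This gives eight chairs in registered poses, one of which is the root $(I,0)$. For each unordered pair $\{c,c'\}$ with $c\ne c'$ (there are $\binom82=28$), I would exhibit a child $X$ of $\Pi_c$ and a child $Y$ of $\Pi_{c'}$, both different from the root, whose poses cannot coexist in any registered tiling. The conflict takes one of two concrete forms:
\begin{enumerate}[label=(\alph*)]
  \item the baseline chairs $X$ and $Y$ share a unit cell but are not the same pose, so two distinct owners claim one cell;
  \item $X$ and $Y$ are face-adjacent across an exposed panel, and their relative pose $X^{-1}Y$ is not in $\atlas$.
\end{enumerate}
By \cref{lem:grid,lem:44}, each form is impossible in a registered tiling. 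In (a) the retained cores would overlap, as in \cref{lem:core}. In (b) the two tiles share a feature, so \cref{lem:44} forces their relative pose into $\atlas$.

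The steps are as follows. First, compute the $8\times 8$ array of child poses of all candidate parents as integer data: a signed permutation together with an integer translation. Second, for each pair, scan the $7\times7$ pairs of non-root children for a witness of type (a) or (b). Cell sets come from the seven cells of each chair. Relative poses are tested for membership in the explicit $44$-element list, after checking face adjacency on the $22$-cell shell. Third, record one witness per pair in a certificate. A Lean statement over \texttt{Fin 8} then closes the lemma by \lean{native\_decide}, or by \lean{decide} if the data is small enough.

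Independently of the machine check, I would look for structural shortcuts to explain why conflicts always exist. Two distinct candidate parents are different translated and rotated copies of $2\carrier$ that both contain the root. Their supports therefore overlap in a region strictly larger than the root's seven cells unless they are positioned very specially. Inside that overlap, the two dissections of \cref{tab:children} generally disagree, which already yields a type-(a) witness. Type (b) should be needed only for pairs whose supports meet essentially just in the root.

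I expect the main obstacle to be exactly those borderline pairs, where the two candidate parents' supports intersect only in the root chair. There, no cell is doubly claimed, and the conflict must come from an illegal contact across a panel. I would first try the face-adjacent siblings of the root listed in the signatures of \cref{lem:shells}. If no illegal contact appears there, I would weaken the lemma's notion of incompatibility to non-extendability of the union to a legal first shell of the root. That weaker form is still enough for unique parenthood and can be checked against the $33$ shells.
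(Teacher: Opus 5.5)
Your proposal is correct and follows the paper's route. The lemma is a finite check over the $\binom82=28$ pairs of candidate parents, each obtained by applying an inverse child pose to the list of children, with "incompatible" in the sense of the first-shell census (an overlap or a contact outside $\atlas$); the paper certifies it in Lean as \lean{parent\_conflicts\_28} by \lean{native\_decide}. The borderline case you anticipate does not occur, because any two candidate parents' supports share at least sixteen cells rather than only the root's seven, so the fallback of weakening the lemma to shell non-extendability is never needed.
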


\begin{theorem}[unique parent; R\S7 Theorem 7.1; \lean{parent\_local\_to\_global},
\lean{unique\_parent}: \tier{T1n}, unconditional]\label{thm:parent}
Every tiling by $\solid$ partitions uniquely into the eight-chair parent clusters, and the
parent of each tile is determined by a finite neighbourhood of it.
\end{theorem}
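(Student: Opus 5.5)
By \cref{thm:registration} we may assume, after one ambient isometry, that the tiling $T$ is registered. Every tile then has a registered pose, and every pair of tiles sharing a panel is in a relative pose of $\atlas$. The argument runs in three steps: assign each tile a parent locally, show that the assignments are consistent, and conclude that they partition $T$ into clusters.

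\emph{Step 1: local parent assignment.} Fix a tile $R$ and normalize its pose to $(I,0)$. Its $22$ shell cells are covered by tiles of $T$, each in a legal pose relative to $R$. These neighbours have disjoint baselines (\cref{lem:core}) and pairwise legal contacts, so the neighbourhood realizes one of the $33$ first shells of \cref{lem:shells}. That shell singles out exactly one candidate parent $\pi(R)$ among the eight.

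\emph{Step 2: the candidate is actually present.} We show that $\pi(R)$ is a cluster of eight tiles of $T$, with $R$ in the child position the shell indicates. In the one central arrangement of \cref{lem:central} this is immediate, since all seven outer children are already in the shell. In the other $32$ arrangements, look at the tile of $T$ in the proposed central cell. Its own first shell must again be one of the $33$, and it must be compatible with the shell of $R$. By \cref{lem:central}, $18$ arrangements admit no such shell and so cannot occur. In the remaining $14$, every compatible completion makes the proposed centre genuinely central. Applying the central case to that centre tile then shows its seven outer children are present, and $R$ is one of them. All of this reads only the first shells of $R$ and of one neighbour, so $\pi(R)$ is determined by a bounded neighbourhood of $R$. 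That gives the locality clause.

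\emph{Step 3: the clusters partition $T$.} Let $R'$ be another tile in the cluster $\pi(R)$. We must show $\pi(R')=\pi(R)$. Since $\pi(R)\subset T$, the cluster is a candidate parent through $R'$. Suppose $\pi(R')$ were a different candidate through $R'$, also realized in $T$. Then two distinct candidate parents through $R'$ would both be present in $T$. By \cref{lem:conflicts}, any two distinct candidates through one root contain an incompatible pair of child poses, which is impossible inside a tiling. This gives the key property: \emph{at most one candidate parent through any tile is fully present in $T$.} Consequently the clusters $\{\pi(R)\}$ are pairwise equal or disjoint, and every tile lies in exactly one. Their baselines are congruent copies of $2\carrier$, and these tile the lattice because the child cells already do.

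The same property gives uniqueness. Any partition of $T$ into eight-chair parent clusters assigns to each tile a fully present candidate parent, and there is only one. It also shows that the parent determined locally in Step 2 is the global one.

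The main obstacle is Step 2 in the $14$ non-central cases. We must make sure that "compatible completion" in \cref{lem:central} checks exactly what a tiling forces: the shell at the centre is itself one of the $33$, its legal contacts with the already placed neighbours, and its cell disjointness. And we must make sure the conclusion there, that the centre is genuinely central, truly forces all seven outer children rather than just one shell pattern. I would handle this by applying \cref{lem:shells,lem:central} a second time, now to the centre tile as root, and checking that no circularity arises. The remaining steps are set-theoretic bookkeeping on the finite certificates.
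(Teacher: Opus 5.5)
Your proposal is correct and follows essentially the same route as the paper. The first shell (\cref{lem:shells}) proposes a unique candidate parent, \cref{lem:central} forces its seven other children to be present from a bounded neighbourhood, and \cref{lem:conflicts} shows two parents cannot share a tile, which gives disjointness, consistency and uniqueness. Your Step~3 only spells out, via the property that at most one candidate through any tile is fully present, what the paper compresses into a single sentence.
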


\begin{proof}
Registration gives the complete local inventory around a tile: its first shell is one of the $33$
of \cref{lem:shells}, which proposes a unique parent. By \cref{lem:central} that parent's seven
other children are present in every tiling in which the shell occurs. By \cref{lem:conflicts}
two parents cannot share a tile, so the local assignments are globally disjoint and consistent,
and every tile is included.
\end{proof}

\begin{figure}[ht]
  \centering
  {\scriptsize\input{generated/first_shells_table}}
  \caption{The $33$ first shells, their proposed parent types and central-completion outcomes.}
  \label{fig:shells}
\end{figure}

% ledger: R7, E4; Lean parent_atlas_eq_fine (T1n), parent_atlas_admissibility, parent_common_parity, halved_parent_baseline_tiling (spine); small_collar_realization_holds (T1n)
\begin{theorem}[admissibility-preserving coarsening; R\S8 Theorem 8.1; \tier{T1n},
unconditional]\label{thm:coarsening}
Let $D$ group a tiling $T$ into its unique parents, divide the parent origins and the scale by
two, and replace each parent's baseline by a fresh copy of $\solid$ in that frame. Then $D(T)$ is
again a tiling by $\solid$, and $D(T+v)=D(T)+v/2$ for every vector $v$.
\end{theorem}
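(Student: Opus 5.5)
The plan is to build $D(T)$ in three layers: a registered baseline chair tiling, a check that its profiles match on every shared panel, and then the small-collar realization lemma. By \cref{thm:registration} we may fix the registering isometry, so every tile of $T$ is in a registered pose. By \cref{thm:parent} the tiles then group uniquely into eight-chair clusters, each with baseline a registered copy $G(2\carrier)+t$. The first step is parity: every parent origin $t$ has a common parity class. Two face-adjacent parents have a relative pose whose translation is even, because it is one of the $44$ admitted parent contacts of \cref{sec:designtest}. Chasing contacts through the (connected) adjacency of the parent clusters shows all parent origins agree modulo $2\Z^3$. After one further integral translation of the ambient frame, all origins are even. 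Halving then gives chairs $G\carrier+t/2$ with $G$ among the $24$ frames, since the parent frames lie in the group generated by the $19$ contact rotations.

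The second step is that the halved chairs tile the unit lattice. The parent clusters partition the cells of $\Z^3$, each cluster being exactly $2$-scaled chair cells. Scaling by $1/2$ maps a partition of $\R^3$ into $2$-scaled carriers to a partition into unit carriers, so the halved baselines form a registered baseline chair tiling with disjoint interiors.

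The third step is the profile match. Take any two halved chairs sharing a panel. Their unhalved parents share a face containing some pair of fine children in contact. That pair is a legal fine contact in $\atlas$, so the parent relative pose is in the admitted parent atlas. By the identity $\tfrac12(\text{admitted parent atlas})=\atlas$, the halved relative pose lies in $\atlas$. Every contact in $\atlas$ is one whose profiles agree panel by panel, which is how $\atlas$ was defined in \cref{sec:substitution}. The small-collar realization (\lean{small\_collar\_realization\_holds}) then converts the locally finite matched baseline tiling into a tiling by copies of $\solid$, via the glued tube homeomorphism, exactly as in the proof of \cref{thm:existence}.

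Covariance follows from uniqueness. $T+v$ is again a tiling, and the unique parent partition of $T+v$ is the translate by $v$ of that of $T$, since the translate of a valid partition is valid. Halving origins sends $t+v$ to $t/2+v/2$, and the fresh copies of $\solid$ depend only on the frame and origin, so $D(T+v)=D(T)+v/2$. The one point needing care is the ambient registration normalization. I would define $D$ intrinsically on the geometric clusters, halving about a fixed origin of $\R^3$ rather than in registered coordinates, so that no choice of normalizing isometry enters the equation.

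I expect the main obstacle to be the parity and connectivity step: showing every parent origin is even without assuming the parent contact graph is connected. I would derive this from face adjacency of parent clusters, which must be connected because they tile $\R^3$ by cubes of side $2$. Along any shared $2$-panel the fine contact forces an admitted parent contact with even translation.
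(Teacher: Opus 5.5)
Your proposal is correct and is essentially the paper's proof: the even translations of the $44$ admitted parent contacts, together with connectivity of the parent contact graph, put all parent origins in one coset of $2\Z^3$; the identity $\tfrac12(\text{admitted parent atlas})=\atlas$ makes the halved chairs match on every shared panel; small-collar realization yields the tiling; and uniqueness of parents gives $D(T+v)=D(T)+v/2$. Two things to tidy. First, drop your ``further integral translation'' making all origins even from the definition of $D$: the paper chooses no phase and uses the actual origins (the coset may be $(1,1,1)+2\Z^3$, so $D(T)$ is registered on a half-integer coset), and normalizing inside $D$ would break covariance for $v\in\Z^3\setminus2\Z^3$, as your final paragraph already anticipates. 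Second, connectivity is most simply read off the connected unit-cell adjacency graph, as the paper does, rather than from side-$2$ cubes.
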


In Lean the two clauses are \lean{coarsening\_is\_tiling} and
\lean{coarsening\_translation\_equivariant}; the physical realization of the coarsened tiling
uses the theorem \lean{small\_collar\_realization\_holds} (\tier{T1n}, \cref{sec:existence}) and
the tube construction.

\begin{proof}
The parent bodies partition space and their contact graph is connected, since the unit-cell
adjacency graph is. Every contact between two parents is one of the $44$ legal parent contacts
of \cref{sec:designtest}, whose translations are all even, so all parent origins lie in one coset
of $2\Z^3$ and halving puts them on a common lattice; by the identity of \cref{sec:designtest}
every halved contact belongs to $\atlas$. The halved baseline tiling is therefore a registered
chair tiling whose profiles match on every shared panel, and the small-collar realization lemma
(\cref{sec:existence}) turns it into a tiling by $\solid$. Translation covariance holds because
the parents of $T+v$ are the parents of $T$ translated by $v$ (\cref{thm:parent}), and the
origins are then divided by two; no phase is chosen, the actual parent origins are used.
\end{proof}

% ledger: R11 (E4)
The coset of $2\Z^3$ may be $(1,1,1)+2\Z^3$, so that $D(T)$ sits on a half-integer coset after
halving; periods are differences of poses within one coset and remain integer vectors, which
is all \cref{sec:aperiodicity} uses. $D$ is not a homothety of each cluster's corrugated surface
into $\solid$: it decodes the carrier and re-standardizes the surface by the same admissible
rule, which is why an exact self-similar corrugated macro-shape is not needed.

% ledger: T8, T9, N1, N2, R8; ruling R7; Lean carrier_hierarchy, carrier_hierarchy_unique, geometric_hierarchy_canonical, geometric_hierarchy_unique, nested_substitution_controls, hierarchy_cell_controls (T1n)
Iterating $D$ gives the hierarchy. Call a \emph{supertile of level $n$} a $2^n$-scaled chair in
a registered pose, and say that a family of partitions of the registered cells of $T$ into
level-$n$ supertiles, one partition for each $n$, is \emph{nested} if every level-$(n+1)$
supertile is exactly the union of its eight geometric child supertiles at level $n$.

\begin{theorem}[the intrinsic hierarchy; ruling R7; \tier{T1n}, unconditional:
\lean{carrier\_hierarchy}, \lean{carrier\_hierarchy\_unique},
\lean{geometric\_hierarchy\_canonical}, \lean{geometric\_hierarchy\_unique}]\label{thm:hierarchy}
Every tiling $T$ by $\solid$ carries a nested family of partitions into supertiles of every level,
with level $0$ the registered carrier tiling associated with $T$, and any two nested families
coincide level by level.
\end{theorem}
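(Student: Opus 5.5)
Existence comes from iterating the coarsening map, and uniqueness from the unique-parent theorem applied level by level.

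\emph{Existence.} Set $T_0=T$ after the registering isometry of \cref{thm:registration}, and $T_{n+1}=D(T_n)$ as in \cref{thm:coarsening}. Each $T_n$ is again a tiling by $\solid$, so \cref{thm:registration,thm:parent} apply at every level. Unwinding the halvings, the parent clusters of $T_n$ pulled back to the original scale are $2^{n+1}$-scaled chairs. They are in registered poses because the parent frames are among the $24$ proper rotations (\cref{sec:designtest}) and the parent origins are integral up to the coset remark after \cref{thm:coarsening}, rescaled by $2^n$. Define the level-$n$ partition of the registered cells of $T$ as these pulled-back supertiles, with level $0$ the carrier tiling. Nestedness is built in: a parent of $T_n$ is, by definition of $D$, the union of its eight children, and each child is a tile of $T_n$, which pulls back to a level-$n$ supertile. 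So each level-$(n+1)$ supertile is exactly the union of its eight geometric children at level $n$, placed by the child poses of \cref{tab:children}.

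\emph{Uniqueness.} Let $\{\mathcal{P}_n\}$ be any nested family and $\{\mathcal{H}_n\}$ the canonical one. Both have $\mathcal{P}_0=\mathcal{H}_0$, the carrier tiling. Assume $\mathcal{P}_n=\mathcal{H}_n$ and prove $\mathcal{P}_{n+1}=\mathcal{H}_{n+1}$. Rescale by $2^{-n}$, so that $\mathcal{H}_n$ becomes the carrier tiling of $T_n$. A level-$(n+1)$ supertile of $\mathcal{P}$ is a $2$-scaled chair in registered pose that is a union of eight level-$n$ supertiles. The key step is that such a union must be one of the eight-chair clusters enumerated in \cref{sec:hierarchy}: the only ways to dissect $2\carrier$ into eight unit chairs are the child poses of \cref{tab:children}. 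Once that holds, $\mathcal{P}_{n+1}$ is a partition of $T_n$ into candidate parent clusters. By \cref{thm:parent} the partition into parent clusters is unique, so $\mathcal{P}_{n+1}=\mathcal{H}_{n+1}$.

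\emph{Main obstacle.} The hard step is the one just used: that the eight-chair dissection of $2\carrier$ is unique. Two things have to be settled.
\begin{enumerate}
  \item \emph{Dissection.} If the notion of nestedness does not fix which dissection is meant, the tiling of $2\carrier$ by eight unit chairs must be shown unique up to the cell-set level. I would try to derive this as a finite check, either by enumerating all packings of $2\carrier$ by $8$ integer-posed chairs or by reading it off from \cref{lem:shells}. If uniqueness failed, I would restrict "geometric child" to the poses of \cref{tab:children}, as the definition of nested appears to intend.
  \item \emph{Frames and legality.} A chair cluster is determined by its cell set, and \cref{thm:parent} compares partitions rather than frames. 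So I also need that the cell set of a $2\carrier$ cluster determines its registered pose, using that $\carrier$ has no self-isometry. I would also check that any cluster arising in $\mathcal{P}_{n+1}$ is legal, so that \cref{lem:conflicts} applies to it.
\end{enumerate}
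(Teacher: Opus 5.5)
Your argument is the paper's proof: existence by reading level $n$ off $D^n(T)$, with nesting built into the definition of $D$, and uniqueness by induction on $n$, applying \cref{thm:parent} to $D^n(T)$. The two ``obstacles'' you list are not real obstacles, however, and the second rests on a false premise.

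For (1), the definition of nested already reads ``geometric child'' through the child poses of \cref{tab:children}. A level-$(n+1)$ supertile in pose $(G,t)$ must be the union of the level-$n$ supertiles of the family in the eight poses $(GH,\,2t+Gu)$. So no uniqueness of dissections of $2\carrier$ is needed. (\Cref{lem:shells}, which is about covers of a root's shell, would not supply one anyway.)

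For (2), it is $\solid$, not $\carrier$, that has no nonidentity self-isometry. $\carrier$ is invariant under all six coordinate permutations, and the identity and the two $3$-cycles among them are proper rotations. Hence the cell set of a $2\carrier$ cluster fixes its registered frame only up to a $\Z/3$; the paper makes the same point about an unmarked chair after \cref{thm:hierarchy}. A step that recovers frames from cell sets would therefore fail.

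That step is also unnecessary. Nesting is imposed on poses, so the induction hypothesis makes the eight child poses of a level-$(n+1)$ supertile tiles of $D^n(T)$, frames included. The supertile is then the candidate parent through each of its children in the sense of \cref{sec:hierarchy}. Its pose is recovered from any one child's pose $(A,s)$ and child index $(H,u)$, via $G=AH^{-1}$ and $t=(s-Gu)/2$.

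\Cref{lem:conflicts} and \cref{thm:parent} are statements about child poses, not cell sets. No separate legality check is needed either: all eight children are tiles of one tiling, and at most one candidate parent through a tile can have all its children present. The canonical parent does, so the two coincide.
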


\begin{proof}
Existence: the level-$n$ partition is read off $D^n(T)$, each level-$n$ supertile being a chair
of $D^n(T)$ scaled by $2^n$ back to the coordinates of $T$; nesting is the definition of $D$.
Uniqueness: a nested family's level-$(n+1)$ partition is a partition of the level-$n$ supertiles
into parents, which \cref{thm:parent} applied to $D^n(T)$ determines; induct on $n$.
\end{proof}

% ledger: T8, M9 (closed); external review of the manuscript 2026-09-09, finding M2; pinned LogicalSpine.lean (LevelSupertilePose, carrier_hierarchy_unique, GeometricHierarchy, geometric_hierarchy_canonical, geometric_hierarchy_unique, carrierHierarchy_geometric); negative/NoncanonicalGeometricHierarchy.lean
The pinned Lean declarations prove this in two steps. \lean{carrier\_hierarchy\_unique} is stated
in the type whose level-$n$ poses belong to $D^n(T)$ (the structure \lean{LevelSupertilePose}
carries that membership). The structure \lean{GeometricHierarchy} carries no such membership: its
data are the raw poses, each level registered by one common rigid motion, with the nesting law of
the theorem; \lean{geometric\_hierarchy\_canonical} proves, by induction on the level from the
unique-parent theorem, that every pose of such a family lies in $D^n(T)$, and
\lean{geometric\_hierarchy\_unique} then gives uniqueness among arbitrary geometrically nested
registered families, \lean{carrierHierarchy\_geometric} supplying the existence half in the raw
type. The written induction above is the content of the first of these theorems. The raw
structure has no canonicity-membership field. The must-fail control
(\file{negative/NoncanonicalGeometricHierarchy.lean}) attempts to prove that a level-one pose is
noncanonical using \lean{geometric\_hierarchy\_canonical}; it checks that the positive membership
proof is rejected at the negated type.

% ledger: T9, N1, N2, L10; source SMKGS24 (fault lines); paper/scripts/seed_language_closure.py (T2); external review of the manuscript 2026-09-09, finding M1
Two things are deliberately not claimed. The theorem has no exhaustion clause: it does not say
that the supertiles containing one fixed tile grow to cover space, and they need not. The
substitution fixed point with seed $(6,0,4,5,2,3,0,1)$ (\cref{sec:limitperiodic}) has near the
origin two whole tiles in the frame $A=\operatorname{diag}(-1,-1,1)$, at the origin and at
$(1,1,-1)$; the first is its own child $000$ at every level, so its ancestors are the supertiles
$2^nA\carrier$, whose union is the closed octant $\{x\le0,\ y\le0,\ z\ge0\}$ and not all of
space, and the second is its own central child, its ancestors filling the closure of the
complement. This is a tiling with an infinite fault surface on supertile boundaries at all
levels, of the kind the hat paper contemplates for hats: ``it is conceivable that [tiles] could
tile infinite sectors of the plane, which could then be combined into tilings with infinite
`fault lines' that lie on the boundaries of supertiles at all levels'' \cite{SMKGS24}. It is
nevertheless substitution-legal, every finite patch of it occurring inside a supertile, so
non-exhaustion does not by itself take a tiling out of the substitution hull; whether every
tiling by $\solid$ lies in that hull is a separate question, left open (\cref{sec:remarks}).
Nor is the hierarchy claimed to
be locally derivable from the tiling by unmarked chairs: an unmarked chair has eight proper
orientations while a registered pose of $\solid$ is one of $24$, so the frame of a chair carries
a $\Z/3$ of information invisible to its geometry, and the parent is read from the features,
not from the chairs. What the theorem gives is exactly what \cref{sec:aperiodicity} needs: a
grouping at every scale that the tiling determines and that every symmetry of the tiling
therefore preserves. The same-frame grandchild at $(2,2,2)$ and the first two nested patches
($64$ and $4{,}096$ tiles, $448$ and $28{,}672$ cells) are checked in Lean as controls
(\cref{fig:nesting}).

\begin{figure}[ht]
  \centering
  \resizebox{0.8\linewidth}{!}{\input{generated/fig_nesting_slice}}
  \caption{The $0\le z<1$ unit-cell slice of the $64$-chair patch $4\carrier$; colours identify
  level-$1$ parents, and bold lines show level-$1$ and level-$2$ boundaries.}
  \label{fig:nesting}
\end{figure}

%% file: generated/first_shells_table.tex
% generated by paper/scripts/first_shells.py from certificates/candidate_certificate.json
\begin{tabular}{rlll}
\toprule
shell & atlas contacts & parent & completion \\
\midrule
1 & 3,9,10,12,20,29,38,40 & 7 & central \\
2 & 10,13,16,18,21,22,41 & 0 & completable \\
3 & 1,9,10,13,22,27,37,41 & 0 & completable \\
4 & 2,4,11,16,18,21,42 & 6 & completable \\
5 & 4,11,14,16,18,21,42 & 6 & impossible \\
6 & 11,14,15,16,18,21,42 & 2 & completable \\
7 & 4,11,16,18,21,22,42 & 6 & impossible \\
8 & 11,16,18,21,22,23,42 & 1 & completable \\
9 & 11,14,16,18,21,26,42 & 3 & impossible \\
10 & 11,16,18,21,22,26,42 & 3 & impossible \\
11 & 11,16,18,21,25,26,42 & 3 & completable \\
12 & 11,14,16,18,21,34,42 & 5 & impossible \\
13 & 11,16,18,21,22,34,42 & 5 & impossible \\
14 & 11,16,18,21,33,34,42 & 5 & completable \\
15 & 1,2,4,9,11,27,37,42 & 6 & completable \\
16 & 1,4,9,11,14,27,37,42 & 6 & impossible \\
17 & 1,9,11,14,15,27,37,42 & 2 & completable \\
\bottomrule
\end{tabular}
\par\smallskip
\begin{tabular}{rlll}
\toprule
shell & atlas contacts & parent & completion \\
\midrule
18 & 1,4,9,11,22,27,37,42 & 6 & impossible \\
19 & 1,9,11,22,23,27,37,42 & 1 & completable \\
20 & 1,9,11,14,26,27,37,42 & 3 & impossible \\
21 & 1,9,11,22,26,27,37,42 & 3 & impossible \\
22 & 1,9,11,25,26,27,37,42 & 3 & completable \\
23 & 1,9,11,14,27,34,37,42 & 5 & impossible \\
24 & 1,9,11,22,27,34,37,42 & 5 & impossible \\
25 & 1,9,11,27,33,34,37,42 & 5 & completable \\
26 & 4,11,16,18,21,42,43 & 6 & impossible \\
27 & 11,16,18,21,26,42,43 & 3 & impossible \\
28 & 11,16,18,21,34,42,43 & 5 & impossible \\
29 & 1,4,9,11,27,37,42,43 & 6 & impossible \\
30 & 1,9,11,26,27,37,42,43 & 3 & impossible \\
31 & 1,9,11,27,34,37,42,43 & 5 & impossible \\
32 & 11,16,18,21,42,43,44 & 4 & completable \\
33 & 1,9,11,27,37,42,43,44 & 4 & completable \\
\bottomrule
\end{tabular}

%% file: generated/fig_nesting_slice.tex
% generated by paper/scripts/nesting_slice.py from certificates/candidate_certificate.json (children)
\begin{tikzpicture}[x=1cm,y=1cm,font=\scriptsize]
  \fill[red!30] (0.0,0.0) rectangle (0.75,0.75);
  \draw[gray!60,thin] (0.0,0.0) rectangle (0.75,0.75);
  \fill[red!30] (0.0,0.75) rectangle (0.75,1.5);
  \draw[gray!60,thin] (0.0,0.75) rectangle (0.75,1.5);
  \fill[red!30] (0.0,1.5) rectangle (0.75,2.25);
  \draw[gray!60,thin] (0.0,1.5) rectangle (0.75,2.25);
  \fill[red!30] (0.0,2.25) rectangle (0.75,3.0);
  \draw[gray!60,thin] (0.0,2.25) rectangle (0.75,3.0);
  \fill[green!40] (0.0,3.0) rectangle (0.75,3.75);
  \draw[gray!60,thin] (0.0,3.0) rectangle (0.75,3.75);
  \fill[green!40] (0.0,3.75) rectangle (0.75,4.5);
  \draw[gray!60,thin] (0.0,3.75) rectangle (0.75,4.5);
  \fill[green!40] (0.0,4.5) rectangle (0.75,5.25);
  \draw[gray!60,thin] (0.0,4.5) rectangle (0.75,5.25);
  \fill[green!40] (0.0,5.25) rectangle (0.75,6.0);
  \draw[gray!60,thin] (0.0,5.25) rectangle (0.75,6.0);
  \fill[red!30] (0.75,0.0) rectangle (1.5,0.75);
  \draw[gray!60,thin] (0.75,0.0) rectangle (1.5,0.75);
  \fill[red!30] (0.75,0.75) rectangle (1.5,1.5);
  \draw[gray!60,thin] (0.75,0.75) rectangle (1.5,1.5);
  \fill[red!30] (0.75,1.5) rectangle (1.5,2.25);
  \draw[gray!60,thin] (0.75,1.5) rectangle (1.5,2.25);
  \fill[red!30] (0.75,2.25) rectangle (1.5,3.0);
  \draw[gray!60,thin] (0.75,2.25) rectangle (1.5,3.0);
  \fill[green!40] (0.75,3.0) rectangle (1.5,3.75);
  \draw[gray!60,thin] (0.75,3.0) rectangle (1.5,3.75);
  \fill[green!40] (0.75,3.75) rectangle (1.5,4.5);
  \draw[gray!60,thin] (0.75,3.75) rectangle (1.5,4.5);
  \fill[green!40] (0.75,4.5) rectangle (1.5,5.25);
  \draw[gray!60,thin] (0.75,4.5) rectangle (1.5,5.25);
  \fill[green!40] (0.75,5.25) rectangle (1.5,6.0);
  \draw[gray!60,thin] (0.75,5.25) rectangle (1.5,6.0);
  \fill[red!30] (1.5,0.0) rectangle (2.25,0.75);
  \draw[gray!60,thin] (1.5,0.0) rectangle (2.25,0.75);
  \fill[red!30] (1.5,0.75) rectangle (2.25,1.5);
  \draw[gray!60,thin] (1.5,0.75) rectangle (2.25,1.5);
  \fill[red!30] (1.5,1.5) rectangle (2.25,2.25);
  \draw[gray!60,thin] (1.5,1.5) rectangle (2.25,2.25);
  \fill[red!30] (1.5,2.25) rectangle (2.25,3.0);
  \draw[gray!60,thin] (1.5,2.25) rectangle (2.25,3.0);
  \fill[green!40] (1.5,3.0) rectangle (2.25,3.75);
  \draw[gray!60,thin] (1.5,3.0) rectangle (2.25,3.75);
  \fill[green!40] (1.5,3.75) rectangle (2.25,4.5);
  \draw[gray!60,thin] (1.5,3.75) rectangle (2.25,4.5);
  \fill[green!40] (1.5,4.5) rectangle (2.25,5.25);
  \draw[gray!60,thin] (1.5,4.5) rectangle (2.25,5.25);
  \fill[green!40] (1.5,5.25) rectangle (2.25,6.0);
  \draw[gray!60,thin] (1.5,5.25) rectangle (2.25,6.0);
  \fill[red!30] (2.25,0.0) rectangle (3.0,0.75);
  \draw[gray!60,thin] (2.25,0.0) rectangle (3.0,0.75);
  \fill[red!30] (2.25,0.75) rectangle (3.0,1.5);
  \draw[gray!60,thin] (2.25,0.75) rectangle (3.0,1.5);
  \fill[red!30] (2.25,1.5) rectangle (3.0,2.25);
  \draw[gray!60,thin] (2.25,1.5) rectangle (3.0,2.25);
  \fill[red!30] (2.25,2.25) rectangle (3.0,3.0);
  \draw[gray!60,thin] (2.25,2.25) rectangle (3.0,3.0);
  \fill[green!40] (2.25,3.0) rectangle (3.0,3.75);
  \draw[gray!60,thin] (2.25,3.0) rectangle (3.0,3.75);
  \fill[green!40] (2.25,3.75) rectangle (3.0,4.5);
  \draw[gray!60,thin] (2.25,3.75) rectangle (3.0,4.5);
  \fill[green!40] (2.25,4.5) rectangle (3.0,5.25);
  \draw[gray!60,thin] (2.25,4.5) rectangle (3.0,5.25);
  \fill[green!40] (2.25,5.25) rectangle (3.0,6.0);
  \draw[gray!60,thin] (2.25,5.25) rectangle (3.0,6.0);
  \fill[violet!30] (3.0,0.0) rectangle (3.75,0.75);
  \draw[gray!60,thin] (3.0,0.0) rectangle (3.75,0.75);
  \fill[violet!30] (3.0,0.75) rectangle (3.75,1.5);
  \draw[gray!60,thin] (3.0,0.75) rectangle (3.75,1.5);
  \fill[violet!30] (3.0,1.5) rectangle (3.75,2.25);
  \draw[gray!60,thin] (3.0,1.5) rectangle (3.75,2.25);
  \fill[violet!30] (3.0,2.25) rectangle (3.75,3.0);
  \draw[gray!60,thin] (3.0,2.25) rectangle (3.75,3.0);
  \fill[brown!40] (3.0,3.0) rectangle (3.75,3.75);
  \draw[gray!60,thin] (3.0,3.0) rectangle (3.75,3.75);
  \fill[brown!40] (3.0,3.75) rectangle (3.75,4.5);
  \draw[gray!60,thin] (3.0,3.75) rectangle (3.75,4.5);
  \fill[brown!40] (3.0,4.5) rectangle (3.75,5.25);
  \draw[gray!60,thin] (3.0,4.5) rectangle (3.75,5.25);
  \fill[brown!40] (3.0,5.25) rectangle (3.75,6.0);
  \draw[gray!60,thin] (3.0,5.25) rectangle (3.75,6.0);
  \fill[violet!30] (3.75,0.0) rectangle (4.5,0.75);
  \draw[gray!60,thin] (3.75,0.0) rectangle (4.5,0.75);
  \fill[violet!30] (3.75,0.75) rectangle (4.5,1.5);
  \draw[gray!60,thin] (3.75,0.75) rectangle (4.5,1.5);
  \fill[violet!30] (3.75,1.5) rectangle (4.5,2.25);
  \draw[gray!60,thin] (3.75,1.5) rectangle (4.5,2.25);
  \fill[violet!30] (3.75,2.25) rectangle (4.5,3.0);
  \draw[gray!60,thin] (3.75,2.25) rectangle (4.5,3.0);
  \fill[brown!40] (3.75,3.0) rectangle (4.5,3.75);
  \draw[gray!60,thin] (3.75,3.0) rectangle (4.5,3.75);
  \fill[brown!40] (3.75,3.75) rectangle (4.5,4.5);
  \draw[gray!60,thin] (3.75,3.75) rectangle (4.5,4.5);
  \fill[brown!40] (3.75,4.5) rectangle (4.5,5.25);
  \draw[gray!60,thin] (3.75,4.5) rectangle (4.5,5.25);
  \fill[brown!40] (3.75,5.25) rectangle (4.5,6.0);
  \draw[gray!60,thin] (3.75,5.25) rectangle (4.5,6.0);
  \fill[violet!30] (4.5,0.0) rectangle (5.25,0.75);
  \draw[gray!60,thin] (4.5,0.0) rectangle (5.25,0.75);
  \fill[violet!30] (4.5,0.75) rectangle (5.25,1.5);
  \draw[gray!60,thin] (4.5,0.75) rectangle (5.25,1.5);
  \fill[violet!30] (4.5,1.5) rectangle (5.25,2.25);
  \draw[gray!60,thin] (4.5,1.5) rectangle (5.25,2.25);
  \fill[violet!30] (4.5,2.25) rectangle (5.25,3.0);
  \draw[gray!60,thin] (4.5,2.25) rectangle (5.25,3.0);
  \fill[brown!40] (4.5,3.0) rectangle (5.25,3.75);
  \draw[gray!60,thin] (4.5,3.0) rectangle (5.25,3.75);
  \fill[brown!40] (4.5,3.75) rectangle (5.25,4.5);
  \draw[gray!60,thin] (4.5,3.75) rectangle (5.25,4.5);
  \fill[brown!40] (4.5,4.5) rectangle (5.25,5.25);
  \draw[gray!60,thin] (4.5,4.5) rectangle (5.25,5.25);
  \fill[brown!40] (4.5,5.25) rectangle (5.25,6.0);
  \draw[gray!60,thin] (4.5,5.25) rectangle (5.25,6.0);
  \fill[violet!30] (5.25,0.0) rectangle (6.0,0.75);
  \draw[gray!60,thin] (5.25,0.0) rectangle (6.0,0.75);
  \fill[violet!30] (5.25,0.75) rectangle (6.0,1.5);
  \draw[gray!60,thin] (5.25,0.75) rectangle (6.0,1.5);
  \fill[violet!30] (5.25,1.5) rectangle (6.0,2.25);
  \draw[gray!60,thin] (5.25,1.5) rectangle (6.0,2.25);
  \fill[violet!30] (5.25,2.25) rectangle (6.0,3.0);
  \draw[gray!60,thin] (5.25,2.25) rectangle (6.0,3.0);
  \fill[brown!40] (5.25,3.0) rectangle (6.0,3.75);
  \draw[gray!60,thin] (5.25,3.0) rectangle (6.0,3.75);
  \fill[brown!40] (5.25,3.75) rectangle (6.0,4.5);
  \draw[gray!60,thin] (5.25,3.75) rectangle (6.0,4.5);
  \fill[brown!40] (5.25,4.5) rectangle (6.0,5.25);
  \draw[gray!60,thin] (5.25,4.5) rectangle (6.0,5.25);
  \fill[brown!40] (5.25,5.25) rectangle (6.0,6.0);
  \draw[gray!60,thin] (5.25,5.25) rectangle (6.0,6.0);
  \draw[very thick] (0.0,0.0) -- (0.0,0.75);
  \draw[very thick] (0.0,0.0) -- (0.75,0.0);
  \draw[very thick] (0.0,0.75) -- (0.0,1.5);
  \draw[very thick] (0.75,0.0) -- (1.5,0.0);
  \draw[very thick] (0.0,2.25) -- (0.0,3.0);
  \draw[very thick] (0.0,3.0) -- (0.75,3.0);
  \draw[very thick] (0.0,1.5) -- (0.0,2.25);
  \draw[very thick] (0.75,3.0) -- (1.5,3.0);
  \draw[very thick] (3.0,0.0) -- (3.0,0.75);
  \draw[very thick] (2.25,0.0) -- (3.0,0.0);
  \draw[very thick] (1.5,0.0) -- (2.25,0.0);
  \draw[very thick] (3.0,0.75) -- (3.0,1.5);
  \draw[very thick] (3.0,2.25) -- (3.0,3.0);
  \draw[very thick] (2.25,3.0) -- (3.0,3.0);
  \draw[very thick] (3.0,1.5) -- (3.0,2.25);
  \draw[very thick] (1.5,3.0) -- (2.25,3.0);
  \draw[very thick] (0.0,5.25) -- (0.0,6.0);
  \draw[very thick] (0.0,6.0) -- (0.75,6.0);
  \draw[very thick] (0.0,4.5) -- (0.0,5.25);
  \draw[very thick] (0.75,6.0) -- (1.5,6.0);
  \draw[very thick] (0.0,3.0) -- (0.0,3.75);
  \draw[very thick] (0.0,3.0) -- (0.75,3.0);
  \draw[very thick] (0.0,3.75) -- (0.0,4.5);
  \draw[very thick] (0.75,3.0) -- (1.5,3.0);
  \draw[very thick] (3.0,5.25) -- (3.0,6.0);
  \draw[very thick] (2.25,6.0) -- (3.0,6.0);
  \draw[very thick] (1.5,6.0) -- (2.25,6.0);
  \draw[very thick] (3.0,4.5) -- (3.0,5.25);
  \draw[very thick] (3.0,3.0) -- (3.0,3.75);
  \draw[very thick] (2.25,3.0) -- (3.0,3.0);
  \draw[very thick] (1.5,3.0) -- (2.25,3.0);
  \draw[very thick] (3.0,3.75) -- (3.0,4.5);
  \draw[very thick] (6.0,0.0) -- (6.0,0.75);
  \draw[very thick] (5.25,0.0) -- (6.0,0.0);
  \draw[very thick] (4.5,0.0) -- (5.25,0.0);
  \draw[very thick] (6.0,0.75) -- (6.0,1.5);
  \draw[very thick] (3.0,0.0) -- (3.0,0.75);
  \draw[very thick] (3.0,0.0) -- (3.75,0.0);
  \draw[very thick] (3.75,0.0) -- (4.5,0.0);
  \draw[very thick] (3.0,0.75) -- (3.0,1.5);
  \draw[very thick] (6.0,2.25) -- (6.0,3.0);
  \draw[very thick] (5.25,3.0) -- (6.0,3.0);
  \draw[very thick] (6.0,1.5) -- (6.0,2.25);
  \draw[very thick] (4.5,3.0) -- (5.25,3.0);
  \draw[very thick] (3.0,2.25) -- (3.0,3.0);
  \draw[very thick] (3.0,3.0) -- (3.75,3.0);
  \draw[very thick] (3.75,3.0) -- (4.5,3.0);
  \draw[very thick] (3.0,1.5) -- (3.0,2.25);
  \draw[very thick] (6.0,5.25) -- (6.0,6.0);
  \draw[very thick] (5.25,6.0) -- (6.0,6.0);
  \draw[very thick] (6.0,4.5) -- (6.0,5.25);
  \draw[very thick] (4.5,6.0) -- (5.25,6.0);
  \draw[very thick] (6.0,3.0) -- (6.0,3.75);
  \draw[very thick] (5.25,3.0) -- (6.0,3.0);
  \draw[very thick] (6.0,3.75) -- (6.0,4.5);
  \draw[very thick] (4.5,3.0) -- (5.25,3.0);
  \draw[very thick] (3.0,5.25) -- (3.0,6.0);
  \draw[very thick] (3.0,6.0) -- (3.75,6.0);
  \draw[very thick] (3.75,6.0) -- (4.5,6.0);
  \draw[very thick] (3.0,4.5) -- (3.0,5.25);
  \draw[very thick] (3.0,3.0) -- (3.0,3.75);
  \draw[very thick] (3.0,3.0) -- (3.75,3.0);
  \draw[very thick] (3.0,3.75) -- (3.0,4.5);
  \draw[very thick] (3.75,3.0) -- (4.5,3.0);
  \draw[line width=1.6pt,black] (0,0) rectangle (6.0,6.0);
  \node[anchor=north,font=\small,align=center] at (3.0,-0.15) {slice $0\le z<1$ of the $64$-chair patch $4\carrier$ (side $8$):\\ cells coloured by level-1 supertile; thick lines: level-1 outlines, level-2 outline};
\end{tikzpicture}

%% file: sec7_aperiodicity.tex
\section{Aperiodicity and homochirality}\label{sec:aperiodicity}

% ledger: T13, T2, T3, T4, T5, N3; source PROOFS_registered.md R§10; Lean period_halving, all_periods_grid, no_period, sym_card_le_24, r44_einstein (spine); 1711.03401 L8 register
\begin{theorem}[no periods; R\S10; \tier{T1n}, unconditional:
\lean{r44\_einstein}, using \lean{period\_halving} and
\lean{all\_periods\_grid}; terminal arithmetic lemma
\lean{no\_period}: \tier{T1}]\label{thm:noperiod}
Every tiling $T$ by $\solid$ has $\Per(T)=\{0\}$.
\end{theorem}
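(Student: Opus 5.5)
The plan is the period-halving tower. Let $v\in\Per(T)$. First we reduce to a registered tiling: by \cref{thm:registration} there is an ambient isometry after which every tile has a proper cubic frame and an integer translation. This isometry carries $v$ to a period $Av$ of the same length, so we may assume $T$ is registered and $v$ is a period of it.

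The first key step is to show that every period of a registered tiling is an integer vector (\lean{all\_periods\_grid}). Translation by $v$ carries some tile $G\solid+t$ to another tile $G'\solid+t'$ of $T$. Then $G\solid+t+v=G'\solid+t'$. Since $\solid$ has no nonidentity self-isometry (\cref{app:asymmetry}), the two poses coincide: $G=G'$ and $v=t'-t$. This is a difference of tile translations. After the possible half-integer coset shift noted after \cref{thm:coarsening}, all translations lie in one coset of $\Z^3$, so the difference is in $\Z^3$. The same argument applies to every iterate $D^n(T)$, each of which is again a registered tiling by \cref{thm:coarsening}.

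The second step is halving (\lean{period\_halving}). By \cref{thm:coarsening}, $D(T+v)=D(T)+v/2$. Since $T+v=T$, this gives $D(T)+v/2=D(T)$, so $v/2\in\Per(D(T))$. Inducting on $n$, we get $v/2^n\in\Per(D^n(T))$ for every $n$. Each $D^n(T)$ is a tiling by $\solid$ and is registered after a fixed isometry; by \cref{thm:parent} that isometry is in fact compatible with the halving. Hence $v/2^n\in\Z^3$ for every $n$. The terminal arithmetic (\lean{no\_period}) is then immediate: if $v\in\Z^3$ and $v/2^n\in\Z^3$ for all $n$, then every coordinate of $v$ is divisible by arbitrarily high powers of $2$, so $v=0$.

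The main obstacle is the registration bookkeeping across levels. The integrality of periods must hold with respect to a registration of $D^n(T)$ itself, not merely of $T$. The half-integer coset $(1,1,1)+2\Z^3$ mentioned after \cref{thm:coarsening} means $D(T)$ may sit on a translated lattice. I would handle this by stating the grid lemma coset-invariantly: periods are differences of tile translations, and all translations in a registered tiling lie in a single coset $c+\Z^3$, so the differences lie in $\Z^3$ regardless of $c$. Used this way, I would apply \cref{thm:registration} freshly at each level and note that pure translations commute with the conjugation. A nonzero period $w$ stays of the form $Aw$ with $A$ a signed permutation, so integrality of $Aw$ is equivalent to integrality of $w$. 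This keeps the argument free of any choice of phase.
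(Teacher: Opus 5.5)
Your proposal is correct and follows the paper's proof. The steps match: periods are integral by registration and the absence of self-isometries; \cref{thm:coarsening} gives $D(T+p)=D(T)+p/2$; iterating and using $\bigcap_n 2^n\Z^3=\{0\}$ finishes. Your coset bookkeeping matches the remark after \cref{thm:coarsening}. Re-registering afresh at each level is harmless but unnecessary, because $D^n(T)$ is already registered in the original coordinates up to that coset shift.
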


\begin{proof}
Let $p$ be a translational period of $T$. Registration (\cref{thm:registration}) and the absence
of nonidentity self-isometries (\cref{app:asymmetry}) make $p$ an integer vector: the translation by $p$ carries
a registered tile to a registered tile in the same frame, and the difference of two integer
shifts is an integer vector. The parent partition is translation-covariant (\cref{thm:parent}),
so $D(T+p)=D(T)+p/2$ by \cref{thm:coarsening}; since $T+p=T$, $p/2$ is a period of $D(T)$, which
is again a registered tiling by $\solid$. Iterating, $p/2^n$ is a period of $D^n(T)$ and hence an
integer vector for every $n$, so $p\in\bigcap_n 2^n\Z^3=\{0\}$.
\end{proof}

This is the classical argument that a unique hierarchy forbids periods, in the form the Spectre
paper states \cite{SMKGS24b} and in the form Robinson's tilings first exhibited: a period would
have to be a period of every level, and the levels grow without bound.

\begin{theorem}[finite symmetry; R\S10; \tier{T1n}, unconditional:
\lean{r44\_einstein}; conditional bound
\lean{sym\_card\_le\_24}: \tier{T1}]\label{thm:sym}
Every tiling $T$ by $\solid$ has $|\Sym(T)|\le24$.
\end{theorem}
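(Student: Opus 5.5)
Fix a tiling $T$ by $\solid$. By \cref{thm:registration} we may, after one ambient isometry, assume $T$ is registered; conjugation preserves the order of $\Sym(T)$. The plan is to show that $\Sym(T)$ embeds injectively into the $24$-element proper cubic group through its linear part.

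\emph{Step 1: every symmetry is a lattice-preserving isometry.} Let $g\in\Sym(T)$ with linear part $A$ and translation $b$. Take a root tile $R$ in pose $(I,0)$. Its image $g(R)$ is a tile of $T$, so it equals $G\solid+t$ for a registered pose $(G,t)$. Since $\solid$ has no nonidentity self-isometry (\cref{app:asymmetry}), the isometry $g$ must equal the pose map $x\mapsto Gx+t$. Hence $A=G$ is one of the $24$ proper cubic rotations and $b=t\in\Z^3$. The same argument applied to any tile shows that $g$ carries registered poses to registered poses.

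\emph{Step 2: the map $g\mapsto A$ is injective.} This is a homomorphism from $\Sym(T)$ into the proper cubic group. If two symmetries $g_1,g_2$ have the same linear part, then $g_2^{-1}g_1$ is a symmetry whose linear part is the identity. It is therefore a translation by a vector in $\Per(T)$, and that vector is zero by \cref{thm:noperiod}. So the kernel is trivial and $|\Sym(T)|\le 24$.

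The main point to check is Step 1. The claim that a symmetry is determined by the image of one tile uses the absence of self-isometries of $\solid$. It also needs properness. A symmetry with orientation-reversing linear part would map $R$ to a tile whose pose has a reflected frame, but every tile of a registered $T$ has a proper frame. So only the $24$ proper rotations, not all $48$ signed frames, can occur as linear parts, and this gives the bound $24$ rather than $48$.
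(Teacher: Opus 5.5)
Your proposal is correct and follows the paper's proof: registration together with the absence of self-isometries of $\solid$ makes a symmetry determined by the image of one tile, so its linear part is a proper cubic rotation, and the kernel of $g\mapsto A$ consists of translational symmetries, which are trivial by \cref{thm:noperiod}. Normalizing the root to $(I,0)$, so that the coset of frames is the group itself, is a harmless simplification of the paper's ``one coset'' phrasing.
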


\begin{proof}
After registration every tile's frame lies in one coset of the $24$-element proper cubic group,
and $\solid$ has no self-isometry, so a symmetry of $T$ is determined by where it sends one
chosen tile: its linear part is one of at most $24$ frames, and, that being fixed, its
translation part is determined as well. The map from $\Sym(T)$ to linear parts has kernel the
translational symmetries, which are trivial by \cref{thm:noperiod}; hence it is injective into a
set of at most $24$ elements.
\end{proof}

\begin{corollary}[strong aperiodicity; a deduction from the two theorems above]\label[corollary]{cor:strong}
No tiling by $\solid$ has a symmetry of infinite order. Hence $\solid$ is a strongly aperiodic
monotile of $\R^3$ in the sense of Mozes and of the hat paper, and \cref{thm:main} holds.
\end{corollary}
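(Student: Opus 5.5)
The corollary follows from \cref{thm:sym} together with the existence theorem; no further geometry is needed. I will argue directly from the definitions quoted in \cref{sec:terminology}.

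First, let $T$ be any tiling by $\solid$ and let $g\in\Sym(T)$. By \cref{thm:sym}, $\Sym(T)$ is a group of order at most $24$. Lagrange's theorem then gives that the order of $g$ divides $|\Sym(T)|$, so $g^{|\Sym(T)|}=\mathrm{id}$. Hence $g$ has finite order, indeed at most $24$, and $\Sym(T)$ contains no infinite cyclic subgroup. In the hat paper's terminology, $T$ is therefore not weakly periodic. The same fact rules out the specific three-dimensional loophole: a screw motion, or a glide reflection with nonzero translational component, has infinite order. So no such motion, and in particular no translation, lies in $\Sym(T)$, which is consistent with \cref{thm:noperiod}.

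Second, \cref{thm:existence} (equivalently \cref{thm:full}(1)) shows that $\solid$ admits a tiling of $\R^3$. A tile that admits a tiling but no weakly periodic tiling is, by the quoted definition, strongly aperiodic in the sense of the hat paper and of Mozes, since it has no infinite cyclic symmetry. $\solid$ is also a closed topological $3$-ball, as recorded in \cref{app:solidchecks}, and no matching rule is imposed, so it is a monotile in the sense of \cref{sec:terminology}. These facts together give \cref{thm:main}.

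There is no real obstacle here, only one point to check. The symmetry group in \cref{thm:sym} must be the full group of isometries permuting the tiles, including orientation-reversing ones, so that the bound covers improper screw motions as well. The proof of \cref{thm:sym} uses the linear parts of all symmetries, taken in a single coset of the frames after registration, so it already covers this case.
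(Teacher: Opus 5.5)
Your proposal is correct and follows the paper's own proof: \cref{thm:sym}, which already builds in \cref{thm:noperiod}, makes $\Sym(T)$ finite, so it has no element of infinite order, and existence comes from \cref{thm:existence}. Your extra check on orientation-reversing isometries is also sound: after registration, a symmetry sending a placement $g_0\solid$ to $g_1\solid$ must equal $g_1g_0^{-1}$ because $\solid$ has no nonidentity self-isometry, so its linear part is a proper cubic rotation.
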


\begin{proof}
A finite group has no element of infinite order; existence is \cref{thm:existence}.
\end{proof}

In the taxonomy of Coulbois, Gajardo, Guillon and Lutfalla, \cref{thm:sym} says that $\solid$
is mildly aperiodic. Strong aperiodicity in their sense would require every tiling to have
trivial symmetry group, and $\solid$ does not have that property: the substitution fixed points
of \cref{prop:seedsym} include tilings whose symmetry group has order $8$, as well as tilings
with trivial symmetry group. In particular no screw motion, rational or
irrational, is a symmetry of any tiling by $\solid$, which is the property the biprism lacks.

% ledger: T6, T7, D5; Lean tiling_homochiral, tiling_chirality_corollary (spine); source 2305.17743 L22
\begin{theorem}[homochirality; \lean{tiling\_homochiral}, \lean{tiling\_chirality\_corollary}:
\tier{T1n}, unconditional]\label{thm:homochiral}
In every tiling $T$ by $\solid$ all placements have the same handedness: the determinants of
their linear parts are all $+1$ or all $-1$. Consequently a reflected and an unreflected copy of
$\solid$ never occur in the same tiling, and every tiling is homochiral.
\end{theorem}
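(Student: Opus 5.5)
The plan is to read homochirality directly off registration (\cref{thm:registration}). Fix a tiling $T$ and the ambient isometry $\phi$ that registers it. Write each placement as $x\mapsto A_ix+b_i$. After registration, $\phi$ composed with that placement has linear part a proper cubic rotation, that is, one of the $24$ frames of determinant $+1$. I will argue that the registered linear part of the placement equals $L(\phi)A_i$ exactly, with no ambiguity. The reason is that $\solid$ has no nonidentity self-isometry (\cref{app:asymmetry}), so a placement is determined by the tile it produces. The placement map is therefore uniquely recovered from the tile, and the registered frame is not merely one frame among several that fit. This gives $\det(L(\phi))\det(A_i)=+1$ for every $i$.

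Since $\det L(\phi)=\pm1$ is a single number fixed by $T$, every $\det A_i$ equals $\det L(\phi)$. So the determinants of all placements are all $+1$ or all $-1$, which is the first assertion.

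For the consequences, take two tiles $T_i,T_j$. Then $x\mapsto A_jA_i^{-1}(x-b_i)+b_j$ maps $T_i$ onto $T_j$, and its linear part has determinant $\det A_j/\det A_i=+1$. So this map is orientation-preserving, and $T$ is homochiral in the Spectre paper's sense. A reflected and an unreflected copy would be two tiles with placements of opposite determinant. Because $\solid$ has no self-isometry, \emph{reflected} is a well-defined property of a tile: an improper isometry onto it cannot coexist with a proper one, since composing the two would give an improper self-isometry of $\solid$. Such a pair is therefore excluded by the first assertion.

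The only point I expect to need care is the step that makes the registered frame equal to $L(\phi)A_i$. \Cref{thm:registration} says that every tile \emph{has} a proper frame after $\phi$, but I must rule out the possibility that the original placement differs from the registered one by a self-isometry of $\solid$, which could be improper. That possibility is closed by asymmetry, so it is the one place \cref{app:asymmetry} enters. It is also worth noting, as a cross-check, that the atlas $\atlas$ consists of proper relative poses even though all $48$ signed frames were candidates (\cref{lem:44}). This gives an independent local proof: neighbours sharing a feature have equal determinant, and the feature-companion graph is connected by \cref{thm:registration}'s proof, namely the single component $\mathcal{C}$. I would present the argument through this connectivity as the main proof and keep the global-frame argument as a remark, since each step there is a direct citation.
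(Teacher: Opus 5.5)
Your proposal is correct and follows the paper's proof: registration (\cref{thm:registration}) puts every placement in a proper cubic frame after one ambient isometry $\phi$, so every determinant equals $\det L(\phi)$ and every relative motion $A_jA_i^{-1}$ is proper. Your explicit use of asymmetry (\cref{app:asymmetry}) to identify the registered pose with $\phi$ composed with the placement is a detail the paper leaves implicit, and your connectivity variant is the properness step of \cref{lem:grid} unwound (compositions of proper atlas poses over the single component), so either presentation works.
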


\begin{proof}
After the ambient isometry of \cref{thm:registration}, every placement is a proper cubic frame
followed by a translation, so all placements have determinant $+1$ relative to that isometry;
undoing it multiplies all determinants by the same sign. For two tiles $g\solid$ and $h\solid$
the relative motion $hg^{-1}$ then has determinant $+1$, which is the Spectre paper's definition
of a homochiral tiling.
\end{proof}

\begin{corollary}[strictly chiral aperiodic monotile; a deduction at the same verification
boundary]\label[corollary]{cor:chiral}
$\solid$ is a strictly chiral aperiodic monotile of $\R^3$ in the sense of the Spectre paper: it
admits tilings, and every tiling it admits is homochiral and non-periodic.
\end{corollary}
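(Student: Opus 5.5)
The corollary is a direct assembly of results already proved; no new geometry is needed. I would unpack the Spectre paper's definition, which has three parts: $\solid$ must admit a tiling, every tiling must be homochiral, and every tiling must be non-periodic. Each part is then discharged by one earlier theorem.

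\emph{Existence} is \cref{thm:existence}, via the nested substitution patches $A_n$ and small-collar realization. \emph{Homochirality} is \cref{thm:homochiral}. Registration (\cref{thm:registration}) puts every placement in a proper cubic frame after one ambient isometry. Hence the relative motion $hg^{-1}$ between any two tiles has determinant $+1$, which is exactly the Spectre paper's clause ``there is an orientation-preserving isometry mapping $T_i$ to $T_j$''. \emph{Non-periodicity} is \cref{thm:noperiod}: every tiling $T$ has $\Per(T)=\{0\}$. If the stronger reading of non-periodic as ``no infinite-order symmetry'' is wanted, \cref{thm:sym} and \cref{cor:strong} supply it.

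The one point to check is that the definitions match. Reflections are allowed in \cref{thm:full}, so ``every tiling it admits'' ranges over tilings by all isometric copies, including mirror images. Homochirality therefore excludes mixing reflected and unreflected copies, but it does not exclude a tiling made entirely of reflected copies. Such a tiling is the mirror image of a registered one, and \cref{thm:homochiral} already covers it, since undoing the ambient isometry multiplies all determinants by the same sign.

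There is no real obstacle here. The only care needed is to state that ``non-periodic'' in the Spectre sense means no nonzero translational period. That is precisely the content of \cref{thm:noperiod}, and all the substantive work sits in the registration and halving arguments quoted above.
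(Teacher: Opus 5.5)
Your proposal is correct and matches the paper's own deduction. The corollary has no separate proof there: it is assembled from \cref{thm:existence} (existence), \cref{thm:homochiral} (homochirality, via the proper frames supplied by registration) and \cref{thm:noperiod} ($\Per(T)=\{0\}$), just as you do, and your remark about wholly reflected tilings is the same determinant-sign observation used in the proof of \cref{thm:homochiral}.
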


Reflections were never excluded by convention: all $48$ signed frames were admitted as candidate
poses in \cref{sec:companions,sec:registration}, and the $44$ legal contacts turned out proper.
A tiling may be reflected as a whole; what the geometry forbids is a mixture. The
Schmitt--Conway--Danzer biprism, by contrast, needs reflections forbidden for its aperiodicity
\cite{ST12}. The four clauses of \cref{thm:full} are now proved: (1) is \cref{thm:existence},
(2) is \cref{thm:noperiod,thm:sym}, (3) is \cref{thm:homochiral}, and (4) is
\cref{thm:hierarchy}.

%% file: sec8_mechanization.tex
\section{Mechanization and the limit-periodic corollary}\label{sec:mechanization}

\subsection{What is checked by machine}\label{sec:lean}

% ledger: M1, M2, T12, D4, N8; source lean/R44/AXIOMS.md, HYPOTHESES.md, THEOREM.md at the pinned commit; profile §7
The finite parts of the proof are theorems of a Lean~4 development written for this paper, with
Mathlib as its library, and the logical assembly of the whole argument is a Lean theorem as
well. \Cref{tab:finite} lists the $23$ finite theorems with their statements and the method by
which each is decided: by the kernel's evaluator (\lean{decide}) where that is feasible, and
otherwise by \lean{native\_decide}, which compiles the decision procedure and trusts its result
through one named axiom per theorem. Those axioms are the only non-standard ones in the
development: \cref{tab:axioms} gives, for every finite theorem and for the main theorems, the
output of Lean's \lean{\#print axioms}, which for the main theorem consists of the three standard
axioms of classical Mathlib and the $21$ named hooks of the finite checks it uses. No theorem in
the development depends on \lean{sorryAx}, and no admission remains.

% ledger: T12, M2b (closed), D4; the honest sentence; the Hypotheses record (empty)
The assembly is the theorem \lean{r44\_einstein}, which takes no hypothesis: it states that a
tiling exists, that every tiling has $\Per(T)=\{0\}$, and that $|\Sym(T)|\le24$; the theorems
\lean{tiling\_homochiral} and \lean{carrier\_hierarchy}, with
\lean{geometric\_hierarchy\_unique}, state clauses (3) and (4) of \cref{thm:full}. The
development was written with a record \lean{Hypotheses} of named written lemmas as its
verification boundary, and the written lemmas of this paper are the fields that record had: the
tube construction and carrier recovery in \cref{app:tubes,app:asymmetry}, realization in \cref{sec:existence},
and the companion and registration arguments in \cref{sec:companions,sec:registration}. Every one
of them is now a Lean theorem of the pinned development (\cref{tab:hypotheses} lists the $19$
theorems, with their tiers by their own axiom lines), the record is empty, and the conditional
form survives under the name \lean{r44\_einstein\_of\_hypotheses} over the empty record
(\cref{app:lean}). Everything else, including the period-halving tower, the symmetry bound, local
finiteness, the unique parent, the coarsening and the hierarchy with its uniqueness among
arbitrary geometrically nested registered families (\cref{thm:hierarchy}), is proved inside
Lean. The honest summary is therefore this: the theorem is kernel-checked modulo the named
compiler hooks; the written proofs of \cref{app:solidchecks,sec:existence} and
\cref{sec:companions,sec:registration} remain as exposition, independently reviewed, and four of
the formal proofs argue differently from the written ones, as noted where those lemmas appear
(carrier recovery, containment rigidity, the covering by a component's solids, and small-collar
realization). The same pipeline was exercised on the hat as a calibration of the tooling
(\cref{app:calibration}). The hat and Spectre papers rely on no proof assistant; the nearest comparator is Myers's
in-progress formalization of those papers \cite{Mye24lean}, and we claim no coverage by Mathlib
itself \cite{Lean4}.

% ledger: M3, M4, M5, M8; source APPENDIX_DATA.md; verify/replay.py; controls.sh
Independently of Lean, every enumeration is replayed from the Python standard library by one
command in about a minute (\cref{app:data}), and the companion census was checked by two
independent Python implementations; the second extracts the features from the actual
triangulated mesh rather than from their annotation table. No floating-point value enters any
checked statement: certificates are integers and exact rationals, the checkers use exact
arithmetic, and the Lean theorems decide integer literals. Negative controls accompany the
positive build. The diagonal audit distinguishes assertions about distinct companion tiles from
relations with legitimate reflexive cases; the controls exercise the repaired diagonal
application, corrupted finite inputs, mixed-handedness claims, a second-hierarchy claim and a
claim that a raw geometric hierarchy has a noncanonical level-one pose, each
checked to fail with its expected diagnostic. Six corrupted inputs to the replay are rejected. The
development was reviewed adversarially twice before this paper was written; the
first round found the hypothesis record inconsistent (one field admitted comparing a tile with
itself), which was repaired and controlled, and the second round found no blocking or major
issue, while noting that the reviewer could not run Lean. A third round, on the closed record with
the empty hypothesis record, was requested on 2026-09-10; as of that date its verdict is
pending. We make no claim of simplicity, and we
have one route to the theorem: the continuous argument of \cref{sec:companions} is written for
checking by hand with its arithmetic in Lean, and the finite part is where the computer does
the work.

\emph{Code.} The repository containing the solid, the certificates, the replay, the Lean
development and the viewer accompanies this paper (\cref{app:data}); its mathematical baseline is commit
\texttt{d90313a717}, the intended release tag is \texttt{paper-v1}, and tag creation and licensing
remain pending owner actions.

% ledger: D4, T12, M2, M6, M7; the freeze (step 18) and the re-pin (step 19)
\emph{Version freeze.} This review version uses commit \texttt{d90313a717} (2026-09-10) as its
mathematical and Lean baseline. The $19$ theorems of \cref{tab:hypotheses}, the empty record, and
the recorded axiom dependencies of \cref{tab:axioms} refer to that baseline; the geometric
certificate data and Lean statements refer to that baseline, and the bibliography and replay
measurements have the separately identified sources recorded in the paper. The intended
paper-release tag is \texttt{paper-v1}, to be placed on the paper-release commit; tag creation
and licensing remain pending owner actions. The theorem, the written proof and the finite
certificates are unchanged since the earlier baseline \texttt{dd2735d9bd} (2026-09-08), at which
$16$ of the $19$ written lemmas were still hypotheses of the Lean assembly.

% ledger: N9; simulations/periodicity_search.py; simulations/periodicity_search_report.json (commit d90313a717); simulations/review/2026-09-09_periodicity_search_r1_eddy.md (controls run)
\emph{Evidence, not proof.} Independently of the proof, and used nowhere in it, a theorem-free
search was run over grid-registered copies of $\solid$ (all $48$ signed frames, reflections
included; copies compatible when their carrier cells are disjoint and the features on every
shared unit panel are exact opposites, in rational arithmetic;
\file{simulations/periodicity\_search.py}). A SAT solver was asked, for every full-rank sublattice
$L\subset\Z^3$ of index at most $40$, whether the torus $\R^3/L$ can be tiled by such copies;
all $43{,}981$ instances are unsatisfiable, so no tiling by registered copies has a full-rank
period lattice of index at most $40$. Heesch-style coronas were grown around one copy to radius
$5$ of $5$ tried, with $251$ copies at radius $5$. The positive controls of the same script find
the unit cube periodic at index $1$ and the featureless chair carrier periodic at index $7$,
as recorded in \file{simulations/review/2026-09-09_periodicity_search_r1_eddy.md}.
The torus and corona receipt is \file{simulations/periodicity\_search\_report.json}
(wall time $108{,}682$ s). This is
evidence in the sense of a sanity check on the finite data: it concerns registered copies and
full-rank lattices only, whereas \cref{thm:noperiod} excludes every period of every tiling.

\input{generated/finite_theorems_table}

\input{generated/hypotheses_table}

\input{generated/axioms_table}

\subsection{The limit-periodic corollary}\label{sec:limitperiodic}

% ledger: L1, L2, L3, L4, L5, L6, L7, Q15, Q16, Q17, Q18, Q19, N1; DECISIONS Q-A (fixed-point wording), Q-B (seed); Lean substitution_modular_coincidence (T1n); verify/substitution_modular_coincidence.py (T2); paper/scripts/seed_language_closure.py (T2)
The substitution of \cref{sec:substitution} is a lattice substitution in the sense of Lee and
Moody \cite{LM01}. Label each unit cell of a registered tiling by the frame of the chair that
covers it and by which of the seven cells of that chair it is: $24\times7=168$ labels. The
refinement replaces each labelled cell by its eight labelled children at twice the scale, so the
labelling is a substitution on $\Z^3$ with inflation $2$ and $168$ symbols, and its $168\times168$
substitution matrix has constant column sum $8=|\det 2I|$.

\begin{theorem}[modular coincidence; \tier{T1n}, replayed in Python]\label{thm:coincidence}
The $168$-label substitution is total and primitive, with least primitive exponent $N=3$, and it
has a modular coincidence of least depth $M=3$: at the address $a=(0,0,2)$ modulo $2^3$, the
level-$3$ descendant of every one of the $168$ labels has label $i=78$.
\end{theorem}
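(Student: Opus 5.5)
The statement is a finite property of an explicit $168\times168$ combinatorial object, so the plan is to build that object exactly and decide each clause by enumeration.

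\emph{Encoding the substitution.} A label is a pair $(G,c)$: $G$ is one of the $24$ proper cubic frames and $c$ is one of the seven cells $a\in\{0,1\}^3\setminus\{(1,1,1)\}$ of $\carrier$. For a chair in pose $(G,t)$ the labelled cell sits at $Gc'+t$, where $c'$ is the unit cube $c+[0,1]^3$. Refinement sends the chair to the eight poses $(GH,\,2t+Gu)$ of \cref{tab:children}. The doubled cell $2(Gc'+t)$ is a $2\times2\times2$ block of unit cells. By the exact dissection of $2\carrier$ (\cref{sec:substitution}), each of these eight unit cells is owned by exactly one child $(GH,2t+Gu)$, in one of that child's seven cells. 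This gives the substitution map
\[
  \sigma\colon (G,c)\mapsto \bigl(\text{label at each address } b\in\{0,1\}^3\bigr).
\]
It depends only on the label and not on $t$, by translation covariance. I would compute it by applying $G^{-1}$ and reading ownership off the $56$ cells of the dissection. Totality, meaning every label yields eight well-defined labelled children, then reduces to three facts: the children's frames $GH$ stay in the proper group, which is closed under products; the $56$ cells partition $2\carrier$; and each of the $8\cdot 7$ address/cell pairs has a unique owner. The constant column sum $8$ recorded in the statement is a consistency check on this table.

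\emph{Primitivity and least exponent.} Form the $0/1$ reachability matrix $B$ of the substitution matrix, and compute the Boolean powers $B$, $B^2$, $B^3$. Primitivity with least exponent $3$ means two things: $B^3$ has all entries positive, and $B^2$ has some zero entry. I expect $B^2$ to fail for a simple reason. After two steps only $64$ descendants exist, fewer than $168$ labels, so no single label can reach every label, and $B^2$ cannot be positive. After three steps there are $512$ descendants. Positivity of $B^3$ then has to be certified by the computation itself. The substantive content is that the $19$ contact rotations generate the full $24$-element group (\cref{sec:substitution}), so frame information mixes over every orbit.

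\emph{Modular coincidence.} For each depth $m$ and each address $a\in\{0,\dots,2^m-1\}^3$, iterate $\sigma$ $m$ times and collect the set of level-$m$ descendant labels at $a$ over all $168$ starting labels. A coincidence at depth $m$ is an address where this set is a singleton. The claim has three parts:
\begin{enumerate}
  \item no address works at depth $1$;
  \item no address works at depth $2$;
  \item at depth $3$ the address $(0,0,2)$ works, with common label $78$.
\end{enumerate}
Part (3) is a single table lookup for each of the $168$ labels. Parts (1) and (2) need $8$ and $64$ addresses respectively, and for each I would exhibit two starting labels that disagree there.

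The descendant at depth $3$ is obtained by composing $\sigma$ along the binary digits of $a$, taken from most significant to least. Because of this, the depth-$3$ check does not require building the $512$-cell patches; a path computation suffices. The label $78$ is an index into a fixed enumeration of labels, and its meaning depends on that enumeration. The same ordering therefore has to be used here as in the certificate, and I would state it explicitly.

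\emph{Main obstacle.} The hard step is building $\sigma$ correctly, in particular the address convention. The address $b$ must be read in the parent's global coordinates after doubling and not in the child's local frame. The cell index must likewise be computed in the child's own frame, through $(GH)^{-1}$ applied to the global cell minus the child's translation. An error in either convention would still produce a total, stochastic-looking matrix, but it would give wrong coincidence data. I would cross-check $\sigma$ against the Lean patches of \cref{sec:existence} (the $64$- and $4{,}096$-tile nested patches). Only after that would I run the three decisions, each of which is a plain finite search.
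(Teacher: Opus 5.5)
Your plan is correct and is essentially the paper's own route: the theorem is certified by exhaustive computation (the \lean{native\_decide} theorem \lean{substitution\_modular\_coincidence}, replayed in Python), which tracks the full set of labels reached from all $168$ roots at every address and finds the first singleton at $(0,0,2)$ along the residue path $(0,0,0),(0,0,1),(0,0,0)$, the most-significant-digit-first composition you describe; your $64<168$ count is a valid shortcut for showing that $B^2$ is not positive. One small correction to a remark you do not rely on: children carry frames $GH$, so frame mixing is governed by the group generated by the child frames of \cref{tab:children}, not by the $19$ atlas contact rotations, although both sets happen to generate the full $24$-element group.
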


The Lean theorem is \lean{substitution\_modular\_coincidence}. The check tracks the full set of
$168$ labels from the root, so the coincidence is over all labels, not along a single orbit; the integers $N$ and $M$ are independent and happen to
coincide.

% ledger: L3, L4, L9, L10, N1, T9; DECISIONS Q-B; Lean seed_language_closure (T1n); verify/replay.py step seed_language_closure (T2); paper/scripts/seed_language_closure.py (T2); external review of the manuscript 2026-09-09, findings B1 and M1 (paper/review/EXTERNAL_RULING_2026-09-09.md)
A legal fixed point on the whole of $\Z^3$ exists. A substitution-legal fixed labelling of
$\Z^3$ is determined by its seed on $\{-1,0\}^3$: the cell $-\delta$, $\delta\in\{0,1\}^3$, has
parent $-\delta$ and residue $\delta$, so its eight labels satisfy their fixed-residue equations;
conversely, a seed in the substitution language satisfying those equations yields such a
labelling, whereas a block satisfying the equations but outside the language need not (one such
block encodes inconsistent whole-chair labels). The substitution language
contains $27$ fixed seed blocks on $\{-1,0\}^3$, in proper-rotation orbits of sizes $24$ and $3$;
iterating any one gives a legal substitution-fixed labelling of $\Z^3$ (\cref{fig:coincidence}).
The count is by exact closure of the language of legal $2\times2\times2$ blocks, which grows
through $168$, $600$, $1{,}278$, $1{,}398$ and $1{,}410$ blocks under successive substitution and
then is stable, and every seed is checked to reproduce itself in place under iteration (the
theorem \lean{seed\_language\_closure}, \tier{T1n}, whose statement lists the five closure sizes,
the count $27$, the orbit sizes $24$ and $3$, the stabilizer histogram, in-place reproduction and
commutation with the $24$ frames; the replay step of the same name in \file{verify/replay.py};
and, independently, \file{paper/scripts/seed\_language\_closure.py}, standard library,
\cref{app:data}); a bounded
search inside level-$3$ patches, which an earlier draft reported as the count $24$, misses the
three seeds of the small orbit, and the external review of the manuscript found them. Every
finite patch of a fixed labelling occurs in some $\sigma^n(j)$ and hence, by primitivity, inside
a refinement of the native chair, that is, inside the tiling of \cref{thm:existence}; so the
chair poses the labelling encodes form a registered chair tiling of $\R^3$ all of whose contacts
lie in $\atlas$, and small-collar realization (\cref{sec:existence}) turns it into a tiling by
$\solid$ at those poses. We write $T_w$ for the tiling determined by the seed $w$.

\begin{proposition}[symmetries of the fixed points; finite data \tier{T1n}
(\lean{seed\_language\_closure}) and \tier{T2}, the deduction
written]\label[proposition]{prop:seedsym}
Let $G$ be the $24$-element proper cubic group, acting on the seed cube by moving the cells and
composing the frames in the labels. For each of the $27$ fixed seeds $w$, $\Sym(T_w)$ is the
stabilizer of $w$ in $G$. The stabilizers have order $1$ for the $24$ seeds of the large orbit
and order $8$ for the three seeds of the small orbit; for the seed $(28,84,91,35,98,42,49,105)$,
in the repository's label order, the stabilizer is generated by $(x,y,z)\mapsto(z,y,-x)$ and
$(x,y,z)\mapsto(-x,-y,z)$. Hence some tilings by $\solid$ have trivial symmetry group and some
have symmetry group of order $8$.
\end{proposition}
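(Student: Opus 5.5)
We first show that every symmetry of $T_w$ is linear with linear part in $G$. Then we identify the linear symmetries with the stabilizer of $w$. The stabilizer orders and generators are read from the finite data.

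\emph{Step 1: symmetries are linear rotations in $G$.} Let $g\in\Sym(T_w)$. The tiling $T_w$ is already registered, with every frame proper. By \cref{thm:registration} and the absence of self-isometries (\cref{app:asymmetry}), $g$ carries registered tiles to registered tiles. Hence $g(x)=Ax+b$ with $A\in G$ and $b\in\Z^3$, exactly as in the proof of \cref{thm:sym}.

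Next we use the unique hierarchy (\cref{thm:hierarchy}). The symmetry $g$ permutes the level-$n$ supertiles for every $n$. Conjugating by the coarsening $D$, the same argument shows that $g$ acts on $D^n(T_w)$ as $x\mapsto Ax+b/2^n$, and this map must again be registered for $D^n(T_w)$. The origin coset of $D^n(T_w)$ is fixed, because $T_w$ is a substitution fixed point: $D(T_w)=T_w$ after the scaling by two. So $b/2^n\in\Z^3$ for all $n$, which gives $b=0$.

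The point to check carefully is that the fixed-point property makes $D(T_w)=T_w$ literally, with no translation. This holds because the fixed labelling is built around the origin from the seed on $\{-1,0\}^3$, with each cell $-\delta$ its own parent. As a result, $\Sym(T_w)$ consists of rotations about the origin by elements of $G$.

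\emph{Step 2: identify $\Sym(T_w)$ with the stabilizer of $w$.} Take $A\in G$. The rotated tiling $AT_w$ is again a substitution-fixed tiling, because the refinement rule commutes with the $24$ frames; this commutation is part of \lean{seed\_language\_closure}. Its seed is $A\cdot w$, the seed cube with its cells moved by $A$ and its frame labels composed with $A$. Note that $A$ preserves $\{-1,0\}^3$ up to the centring convention, and this has to be checked against the cell-versus-point convention.

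A fixed labelling is determined by its seed, and the tiling is determined by its labelling. Therefore $AT_w=T_w$ if and only if $A\cdot w=w$. Combined with Step 1, this gives $\Sym(T_w)=\mathrm{Stab}_G(w)$.

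\emph{Step 3: the finite data.} The seeds fall into two orbits, of sizes $24$ and $3$. By orbit--stabilizer, the stabilizers have order $1$ on the large orbit and $24/3=8$ on the small orbit. For the named seed, we verify directly that the two listed rotations fix it. These two rotations generate a group of order $8$, which is the dihedral $4$-fold group about the $y$-axis: the first is a quarter turn about $y$ and the second is a half turn about $z$. This group is therefore the whole stabilizer.

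\emph{Main obstacle.} The main obstacle is Step 1, specifically ruling out a translation part $b$ that is compatible with all levels. The cleanest route is to show that the level-$n$ supertile containing the origin cell $-\delta$ is anchored at the origin for every $n$. Any symmetry must permute these supertiles, and they grow without bound around the origin, although they need not exhaust space because of fault lines. So $g$ must preserve the origin-corner structure.

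If this fails to pin $b$ directly, we will fall back on the halving argument. In that version $g-A$ descends to $D^n$ as a translation by $b/2^n$, and registration of each level forces $b\in\bigcap_n 2^n\Z^3=\{0\}$.
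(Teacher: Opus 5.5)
Your proposal is correct and follows the paper's proof: registration and the asymmetry of $\solid$ give $g=(A,b)$ with $A\in G$ and $b\in\Z^3$; equivariance of the coarsening together with $D(T_w)=T_w$ halves $b$ at every level, so $b=0$; and commutation of the substitution with $G$, plus the fact that a fixed labelling is determined by its seed, identifies $\Sym(T_w)$ with the stabilizer of $w$. Your orbit--stabilizer count, and your check that the two named rotations generate a dihedral group of order $8$, only make explicit what the paper reads off the finite seed data; the centring check you flag is immediate, since the cells $\{-1,0\}^3$ fill $[-1,1]^3$, which every $A\in G$ preserves.
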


\begin{proof}
Let $g=(R,t)$, $x\mapsto Rx+t$, be a symmetry of $T_w$. As in the proof of \cref{thm:sym},
registration and the absence of self-isometries give $R\in G$ and $t\in\Z^3$. The parent
partition is read from the tiling alone, and the children of a rotated pose are the rotated
children (a finite check over $G$ in the same script), so by the uniqueness of parents
(\cref{thm:parent}) the parents of $RT$ are the rotated parents of $T$; with the translation
clause of \cref{thm:coarsening}, $D(gT)=(R,t/2)\,D(T)$. The seed being fixed in place,
$D(T_w)=T_w$, so $(R,t/2)$ is again a symmetry of $T_w$ and $t/2\in\Z^3$; iterating,
$t\in\bigcap_n2^n\Z^3=\{0\}$. Thus $\Sym(T_w)=\{R\in G: RT_w=T_w\}$. The substitution commutes
with the action of $G$ on labelled cells (the same finite check), so $RT_w$ is the fixed point
with seed $Rw$; a fixed point is determined by its seed, so $RT_w=T_w$ exactly when $Rw=w$. The
orders and the generators are read off the $27$ seeds.
\end{proof}

The seed $(6,0,4,5,2,3,0,1)$ of the large orbit is the one whose ancestors fill only an octant
(\cref{sec:hierarchy}); its tiling has trivial symmetry group. The three seeds of the small orbit
each consist of eight whole tiles at the origin, one in each octant, each its own child $000$ at
every level.

% ledger: D9, Q16; source math0002019 L102-L107 (Lee-Moody definitions)
We recall the definitions the corollary uses, from Lee and Moody \cite{LM01}. A cut-and-project
scheme consists of a lattice $\widetilde L\subset\R^3\times G$, where $G$ is a locally compact
abelian group, such that the physical projection $\pi_1$ is injective on $\widetilde L$ and the
internal projection $\pi_2$ has dense image in $G$. A model set is, up to translation,
$\{\pi_1(\ell):\ell\in\widetilde L,\ \pi_2(\ell)\in W\}$, where the window $W\subset G$ has
nonempty interior and compact closure; it is regular when $\partial W$ has Haar measure zero.
Here the internal group is $\varprojlim_n\Z^3/2^n\Z^3$, the $2$-adic completion. A modular
coincidence occurs when, at some substitution depth and residue address, all substitution maps
at that address have the same output label.

% ledger: L1, L2, L3, L4, L5, L6, L7, Q15, Q16, Q17, Q18, Q19, N1; DECISIONS Q-A (fixed-point wording); the corollary and the paragraph after it
\begin{corollary}[limit-periodic structure of the fixed point; the cited imports are \tier{T3}]\label[corollary]{cor:limitperiodic}
By Lee and Moody's Theorem 3 \cite{LM01}, applied in the direction (iv)~$\Rightarrow$~(ii) with
its hypotheses verified as above (\cref{thm:coincidence}: primitivity; Perron--Frobenius eigenvalue $8=|\det 2I|$; the
label classes of the fixed point partition $\Z^3$), the $168$ label classes of the substitution
fixed point are regular model sets for the $2$-adic cut-and-project scheme of \cite{LM01}; by
Schlottmann's theorem, in the form stated by Lee, Moody and Solomyak
\cite[Theorem~5.11]{LMS03}, each is pure point diffractive; in the terminology of Baake, Moody and
Schlottmann and of Baake and Grimm \cite{BMS98,BG10,BG13} the structure is limit-periodic.
\end{corollary}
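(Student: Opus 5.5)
The plan is to verify, one at a time, the hypotheses of Lee and Moody's Theorem~3 in the lattice-substitution setting, and then pass from model sets to pure point diffraction and limit-periodicity by citation. \emph{First}, I would set up the labelled point configuration. For each fixed seed $w$, the fixed point $T_w$ gives a partition $\Z^3=\bigsqcup_{i=1}^{168}\Lambda_i$, where $\Lambda_i$ is the set of cells carrying label $i$. By the seed construction preceding \cref{prop:seedsym}, this family is invariant under the substitution. Concretely, $\Lambda_i=\bigcup_j (2\Lambda_j+D_{ij})$, where the digit sets $D_{ij}\subset\{0,1\}^3$ record which residue of a label-$j$ cell carries label $i$. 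These unions are disjoint, because every lattice point has a unique parent and residue. This is exactly Lee and Moody's lattice substitution system with expansion $2I$.

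\emph{Second}, I would check the primitivity and eigenvalue hypotheses. Primitivity, with exponent $3$, is part of \cref{thm:coincidence}. The substitution matrix has constant column sum $8$, so its Perron--Frobenius eigenvalue is $8=|\det 2I|$. Together with the partition property, this places us in the setting of their Theorem~3. Its condition~(iv), a modular coincidence at some depth relative to $2^M\Z^3$, is the content of \cref{thm:coincidence}. At address $(0,0,2)$ modulo $2^3$, every label's level-$3$ descendant is label $78$, so $\Lambda_{78}\supseteq 2^3\Z^3+a'$ for a suitable shift $a'$, after the fixed-point recentring.

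\emph{The step I expect to be the main obstacle} is matching the paper's address conventions to Lee and Moody's. The coincidence has to be stated for the fixed point's own coordinates, not only along descendants of the root. Since $T_w$ is a fixed point, every cell is a level-$3$ descendant of some cell. So $2^3\Z^3+(\text{shifted address})$ lies in a single class, which is what (iv) demands.

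\emph{Third}, I would apply (iv)$\Rightarrow$(ii), which says that each $\Lambda_i$ is a regular model set in the $2$-adic scheme $\widetilde L=\{(x,\iota(x))\}\subset\R^3\times\varprojlim\Z^3/2^n\Z^3$. Then I would cite \cite[Theorem~5.11]{LMS03}: regular model sets are pure point diffractive. Finally, a model set over a compact $2$-adic internal group with a window that is a union of cosets up to measure zero is limit-periodic in the sense of \cite{BMS98,BG10,BG13}. The reason is that each $\Lambda_i$ is approximated in density by finite unions of cosets of $2^n\Z^3$, that is, by periodic sets. Everything beyond the finite check is cited (tier \tier{T3}).
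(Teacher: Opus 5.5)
Your proposal is correct and follows the paper's route: the paper's argument is this same citation chain, verifying primitivity and the depth-$3$ modular coincidence from \cref{thm:coincidence}, the Perron--Frobenius eigenvalue $8=|\det 2I|$ from the constant column sum, and the partition of $\Z^3$ by label classes, and then invoking Lee--Moody (iv)$\Rightarrow$(ii), \cite[Theorem~5.11]{LMS03} and the limit-periodic terminology. Your added remark makes explicit a step the paper leaves implicit: in a fixed point every cell is a level-$3$ descendant, so the address $(0,0,2)$ gives $(0,0,2)+2^3\Z^3\subseteq\Lambda_{78}$ in the fixed point's own coordinates.
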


The corollary is a statement about the fixed point, not about every tiling by $\solid$: whether
every tiling lies in the substitution hull is the fault-line question left open in
\cref{sec:hierarchy}, and Lee and Moody's own passage from the substitution to every tiling
obeying the Taylor--Socolar matching rules is a separate three-part argument \cite{LM13}, not
an application of their Theorem 3. In Socolar's words about the hat, whose structure ``is
qualitatively different from the limit-periodic structure of the hexagonal Taylor--Socolar
monotile tiling, which has Bragg diffraction peaks at wavelengths of the form $2^{-n}k_0$ for
arbitrarily large $n$'' \cite{Soc23}, the fixed-point label-class structure established here is
limit-periodic with a $2$-adic internal space, rather than quasicrystalline in the sense
contrasted by Socolar.
Akiyama and Lee verified pure point spectrum for a $168$-prototile Taylor--Socolar substitution by
an overlap coincidence \cite{AL14}, and Lee and Moody checked the sphinx's modular coincidence by
computer at depth $8$ \cite{LM01}; ours is checked at depth $3$ in Lean and replayed in Python.

\begin{figure}[ht]
  \centering
  \input{generated/fig_coincidence}
  \caption{The modular-coincidence address and the all-address distributions of label-set sizes
  at depths $1$--$3$.}
  \label{fig:coincidence}
\end{figure}

%% file: generated/finite_theorems_table.tex
% generated by paper/scripts/finite_theorems_table.py; from lean/R44/R44/Theorems.lean docstrings and the AXIOMS.md ledger at d90313a717
{\small
\begin{longtable}{L{0.22\linewidth} L{0.48\linewidth} L{0.17\linewidth}}
\caption{The finite theorems of the Lean development at the state described in this paper: statement in words (the theorem's docstring) and decision method (FIGURES.md, Table 1).}\label{tab:finite}\\
\toprule
theorem & statement (docstring) & method \\
\midrule
\endfirsthead
\toprule
theorem & statement (docstring) & method \\
\midrule
\endhead
\bottomrule
\endfoot
\lean{children\_partition\_2P} & The eight literal child poses are proper and their 56 integer cells are a disjoint, exact partition of the doubled seven-cube chair. & kernel \lean{decide} \\
\lean{contact\_closure\_30} & Refining the 21 internal face contacts once gives 30 states, and one more refinement adds no state. The result is the literal 30-state certificate. & \lean{native\_decide} \\
\lean{profile\_canonical} & The 30 states induce 372 distinct sign equations. Their listed twelve connected components are balanced, rooted positively at their least roles, and ordered by those roots; canonical signs and component magnitudes give both independent 192-entry profile literals. & \lean{native\_decide} \\
\lean{atlas\_44} & Among all 2,388 disjoint touching shell poses in all 48 signed frames, profile compatibility admits exactly the literal 44-pose atlas. & \lean{native\_decide} \\
\lean{first\_shells\_33} & Exhaustive exact-cover recursion gives precisely the 33 literal covers of the 22-cell shell, and every cover has one candidate parent signature. & \lean{native\_decide} \\
\lean{central\_completion} & Of the 32 outer-root shells, exactly 14 have a compatible completion and 18 have none; every viable completing shell types the centre as central. & \lean{native\_decide} \\
\lean{parent\_conflicts\_28} & Every one of the 28 unordered pairs among the eight parents through a root contains an incompatible pair of child poses. & \lean{native\_decide} \\
\lean{parent\_atlas\_eq\_fine} & The induced parent census is 697 candidates, 116 disjoint macro-bodies and 44 admitted contacts; all admitted shifts are even and halving gives the fine 44-contact atlas extensionally. & \lean{native\_decide} \\
\lean{mates\_census} & The literal complete-feature census has 6,862 poses, of which 5,317 avoid the root baseline. The 5,273 rejection records quantify over every isolated companion option (299,975 in all); every selected overlap retains side at least 42/\allowbreak{}400 = 21/\allowbreak{}200. The 44 survivors are exactly the registered atlas. & \lean{native\_decide} \\
\lean{deviations\_distinct} & For 1 <= i,j <= 12 the base/\allowbreak{}ridge deviation equation has no solution. For L=3/\allowbreak{}25, both requested rational inequalities hold after clearing 25\textasciicircum{}2. & kernel \lean{decide} \\
\lean{no\_native\_symmetry} & Among all 48 signed coordinate permutations, only the identity preserves the centres, outward normals and coefficients of the 192-feature table. & kernel \lean{decide} \\
\lean{native\_features\_length} & The literal table has exactly the 192 roles used by the geometric model. & kernel \lean{decide} \\
\lean{native\_feature\_centers\_nodup} & Distinct literal roles have distinct eighth-grid feature centres. & kernel \lean{decide} \\
\lean{native\_feature\_normal\_axis} & Every literal panel normal has unit coefficient on its selected axis. & kernel \lean{decide} \\
\lean{solid\_mesh\_exact} & The 2,138 literal vertices and 4,272 indexed triangles are a connected, closed oriented 6,408-edge surface; it is exactly the panel/\allowbreak{}feature construction triangle-by-triangle and has signed volume seven. & \lean{native\_decide} \\
\lean{boundary\_sphere} & Every indexed boundary edge has exactly two incident triangles, the edge-sharing triangle graph is connected, every vertex link is one cycle, and the computed counts satisfy V - E + F = 2138 - 6408 + 4272 = 2. The classification theorem for connected closed surfaces therefore makes this PL surface a 2-sphere; the PL Schoenflies (Alexander) theorem then makes the bounded solid a closed 3-ball. These classical bridge theorems are imported at T3; see E. E. Moise, *Geometric Topology in Dimensions 2 and 3*, and C. P. Rourke--B. J. Sanderson, *Introduction to Piecewise-Linear Topology*. & \lean{native\_decide} \\
\lean{nested\_substitution\_controls} & The same-frame grandchild occurs at (2,2,2), and the first two nested substitution controls have 64/\allowbreak{}4,096 poses and 448/\allowbreak{}28,672 disjoint cells. & \lean{native\_decide} \\
\lean{hierarchy\_cell\_controls} & The root double refinement contains all 448 cells \texttt{4a+b} of the seven four-by-four-by-four blocks, and every signed-permutation lower-corner action permutes the 64 offsets in the form required by equivariance. & \lean{native\_decide} \\
\lean{registered\_shell\_cell\_controls} & The 192 panel features map onto the 22 shell cells; every legal-contact frame is one of the 48 signed permutations; and an exact opposite center/\allowbreak{}normal/\allowbreak{}coefficient mate owns the cell beyond the root panel. & \lean{native\_decide} \\
\lean{orientation\_group\_24} & The 19 contact frames generate exactly the literal 24-element proper cubic rotation group. & \lean{native\_decide} \\
\lean{substitution\_modular\_coincidence} & The cell-labelled R44 chair substitution is a total constant-length substitution on 168 labels. Its 168$\times$168 matrix has column sum eight and least positive power three; its least modular-coincidence depth is three. The first lexicographic witness is address \texttt{(0,0,2)} and label 78, namely generated frame 11 and local chair cell 1. Lee--Moody, *Lattice substitution systems and model sets*, Theorem 3, supplies the external implication from primitive modular coincidence to model sets; that classification is a cited T3 import, not part of this theorem. & \lean{native\_decide} \\
\lean{seed\_language\_closure} & Closing the legal \texttt{2$\times$2$\times$2} block language under all 27 windows of a substituted block has successive sizes 168, 600, 1,278, 1,398 and 1,410, after which the language is stable. Exactly 27 blocks on \texttt{\{-1,0\}$^3$} are fixed by their eight residue maps. They form two proper-frame orbits of sizes 24 and 3; 24 seeds have trivial origin stabilizer and three have stabilizer order 8. Every seed reproduces itself in place, and the cell substitution commutes with all 24 generated proper frames. & \lean{native\_decide} \\
\lean{mesh\_angle\_audit} & Every actual mesh edge has one of the declared flat/\allowbreak{}carrier/\allowbreak{}feature dihedrals. The 1,536 feature edges contain 32 copies of each (base/\allowbreak{}ridge, magnitude 1..12, positive/\allowbreak{}negative side) class with the exact squared-angle and edge-length formula. & \lean{native\_decide} \\
\end{longtable}}

%% file: generated/hypotheses_table.tex
% generated by paper/scripts/hypotheses_table.py; from lean/R44/HYPOTHESES.md 'Discharged', PAPER_UPDATES.md section 2 and build_axioms.log at d90313a717; 19 former fields; 0 remaining
{\small
\begin{longtable}{L{0.21\linewidth} L{0.09\linewidth} L{0.22\linewidth} L{0.05\linewidth} L{0.31\linewidth}}
\caption{The written lemmas of Sections 2--5 that formed the \lean{Hypotheses} record, each now a Lean theorem of the pinned development (the record is empty, Appendix C): the former field, the written lemma it quoted, the theorem, its tier by its own axiom line, and the argument of the formal proof where it differs from, or refines, the written one (FIGURES.md, Table 3).}\label{tab:hypotheses}\\
\toprule
former field & written lemma & Lean theorem & tier & how the formal proof argues \\
\midrule
\endfirsthead
\toprule
former field & written lemma & Lean theorem & tier & how the formal proof argues \\
\midrule
\endhead
\bottomrule
\endfoot
\lean{retained\_core\_overlap} & A-L5.1 & \lean{retained\_core\_overlap\_holds} & T1 & density choice of overlapping cube interiors; eighth-grid midpoint \\
\lean{carrier\_feature\_frame\_reduction} & R\S{}6, E5 & \lean{carrier\_feature\_frame\_reduction\_holds} & T1n & six carrier symmetries from diameter pairs and the missing corner; signed heights from membership thresholds along normals \\
\lean{circular\_cone\_solid\_angle} & A-L4.1 & \lean{circular\_cone\_solid\_angle\_holds} & T1 & horizontal disk slices; volume (2$\pi$/\allowbreak{}3)(1 - L/\allowbreak{}$\surd$(1+L$^2$)), L = 3/\allowbreak{}25 \\
\lean{per\_tube\_homeomorphisms} & A\S{}1 & \lean{per\_tube\_homeomorphisms\_holds} & T1n & inverse written with the continuous clamp $\psi$; seams and joint continuity in the height proved \\
\lean{feature\_tube\_maps\_glue} & A\S{}1 & \lean{feature\_tube\_maps\_glue\_holds} & T1n & gluing by finite composition of supported homeomorphisms \\
\lean{feature\_tube\_map\_carries\_carrier} & A\S{}1 & \lean{feature\_tube\_map\_carries\_carrier\_holds} & T1n & F(x) $\in$ Q $\Leftrightarrow$ x $\in$ P proved pointwise, not inferred from retained cores \\
\lean{feature\_circular\_cone\_containment} & A-L4.1 & \lean{feature\_circular\_cone\_containment\_holds} & T1n & agreement with the signed-tent hypograph on an open ambient neighbourhood of every feature-graph point (base edges included); Lipschitz cone inclusion transported by an affine isometry \\
\lean{planar\_area\_carrier\_recovery} & R\S{}6, E5 & \lean{planar\_area\_carrier\_recovery\_holds} & T1 & alternative proof of the unchanged proposition: diameter-endpoint argument (squared distance bound 1089/\allowbreak{}100 < 12; the six antipodal carrier corners recover centre and axes; the missing corner excludes reversal). Not the written planar-area calculation; say so where the paper cites R\S{}6 \\
\lean{cone\_sector\_budgets} & A-L2.2 & \lean{cone\_sector\_budgets\_holds} & T1n & Q as a finite Boolean combination of affine halfspaces; uniform conical germs; equality with the radial tangent cone; coverage includes cone-boundary directions, disjointness is of interiors \\
\lean{connected\_feature\_companion} & A-T4.2 & \lean{connected\_feature\_companion\_holds} & T1n & finite disjoint closed cover on connected sets; triple contact excluded via the discharged open-cone inclusion and volume formula (no assumption that a cone and its interior have equal volume) \\
\lean{feature\_containment\_rigidity} & A-L4.3, E1 & \lean{feature\_containment\_rigidity\_holds} & T1n & compactness argument (K compact, f an isometry, f(K) $\subseteq$ K $\Rightarrow$ f(K) = K) in place of the written graph-length argument; uses the distinct-tile premise \\
\lean{companion\_pose\_discrete} & A-C4.4 & \lean{companion\_pose\_discrete\_holds} & T1n & recovery of centre, normal, cubic frame, eighth-grid translation; census equality via list semantics; one side condition on fixed census metadata is the named T1n theorem \lean{mate\_records\_roles\_nonempty} (ruling R8) \\
\lean{only\_registered\_mates} & A-L5.3 & \lean{only\_registered\_mates\_holds} & T1 & endpoint keeps its frozen companion premises, including distinct placements; accepted companion/\allowbreak{}rigidity/\allowbreak{}census/\allowbreak{}overlap helpers assemble the literal 44-mate conclusion \\
\lean{component\_solids\_cover} & A-L6.2 & \lean{component\_solids\_cover\_holds} & T1 & clamp to an interior point with dist$^2$ $\le$ 3/\allowbreak{}2500 < 1/\allowbreak{}100, use the carried 1/\allowbreak{}4-ball and packing disjointness; alternative to assembly-wide tube gluing and conditional on an existing tiling, not an existence proof \\
\lean{small\_collar\_realization} & R\S{}8--9 & \lean{small\_collar\_realization\_holds} & T1n & duplicate tube sites are identified by centre; conjugated maps agree; membership equivalence holds for every tile including unrelated ones; separation is 21/\allowbreak{}200 > 1/\allowbreak{}16 \\
\lean{complete\_dihedral\_list} & A-L3.1 & \lean{complete\_dihedral\_list\_holds} & T1n & native wedge charts and all 6,408 mesh-edge incidences; the finite incidence trace is the named R8 theorem \lean{native\_mesh\_incidence\_trace} \\
\lean{generic\_feature\_partner} & A-L3.2 & \lean{generic\_feature\_partner\_holds} & T1n & native boundary strata and ordinary-meridian classification, with the finite exceptional-vertex tables certified by named native hooks \\
\lean{baseline\_component\_covers\_grid} & A-L6.1 & \lean{baseline\_component\_covers\_grid\_holds} & T1n & concrete companion assembly from the discharged local geometry and exact atlas \\
\lean{unrestricted\_alignment} & A-T6.3 & \lean{unrestricted\_alignment\_holds} & T1n & component coverage and the certified proper-frame group assemble global registration \\
\end{longtable}}

%% file: generated/axioms_table.tex
% generated by paper/scripts/axioms_table.py; from lean/R44/build_axioms.log at d90313a717; 169 declarations with an axiom line
{\small
\begin{longtable}{L{0.3\linewidth} L{0.58\linewidth}}
\caption{Axiom lines: the output of \lean{\#print axioms} for the finite theorems and the main theorems; ``standard'' is \lean{propext}, \lean{Classical.choice}, \lean{Quot.sound}, and hooks are the per-theorem \lean{native\_decide} axioms (FIGURES.md, Table 4).}\label{tab:axioms}\\
\toprule
declaration & axioms (\texttt{\#print axioms}) \\
\midrule
\endfirsthead
\toprule
declaration & axioms (\texttt{\#print axioms}) \\
\midrule
\endhead
\bottomrule
\endfoot
\lean{children\_partition\_2P} & none \\
\lean{contact\_closure\_30} & hooks: contact\_closure\_30 \\
\lean{profile\_canonical} & propext; hooks: profile\_canonical \\
\lean{atlas\_44} & standard; hooks: atlas\_44 \\
\lean{first\_shells\_33} & propext; hooks: first\_shells\_33 \\
\lean{central\_completion} & propext; hooks: central\_completion \\
\lean{parent\_conflicts\_28} & hooks: parent\_conflicts\_28 \\
\lean{parent\_atlas\_eq\_fine} & standard; hooks: parent\_atlas\_eq\_fine \\
\lean{mates\_census} & standard; hooks: mates\_census \\
\lean{deviations\_distinct} & none \\
\lean{no\_native\_symmetry} & propext \\
\lean{native\_features\_length} & propext \\
\lean{native\_feature\_centers\_nodup} & propext \\
\lean{native\_feature\_normal\_axis} & propext \\
\lean{solid\_mesh\_exact} & standard; hooks: solid\_mesh\_exact \\
\lean{boundary\_sphere} & standard; hooks: boundary\_sphere \\
\lean{nested\_substitution\_controls} & Quot.sound, propext; hooks: nested\_substitution\_controls \\
\lean{hierarchy\_cell\_controls} & hooks: hierarchy\_cell\_controls \\
\lean{registered\_shell\_cell\_controls} & propext; hooks: registered\_shell\_cell\_controls \\
\lean{orientation\_group\_24} & Quot.sound, propext; hooks: orientation\_group\_24 \\
\lean{substitution\_modular\_coincidence} & propext; hooks: substitution\_modular\_coincidence \\
\lean{seed\_language\_closure} & Quot.sound, propext; hooks: seed\_language\_closure \\
\lean{mesh\_angle\_audit} & standard; hooks: mesh\_angle\_audit \\
\lean{r44\_einstein} & standard; 21 hooks (DischargeCarrierStrata.carrier\_coordinate\_states\_table, DischargeMeshTrace.native\_mesh\_incidence\_trace, \dots) \\
\lean{tiling\_homochiral} & standard; 13 hooks (DischargeCarrierStrata.carrier\_coordinate\_states\_table, DischargeMeshTrace.native\_mesh\_incidence\_trace, \dots) \\
\lean{tiling\_chirality\_corollary} & standard; 13 hooks (DischargeCarrierStrata.carrier\_coordinate\_states\_table, DischargeMeshTrace.native\_mesh\_incidence\_trace, \dots) \\
\lean{carrier\_hierarchy} & standard; 17 hooks (DischargeCarrierStrata.carrier\_coordinate\_states\_table, DischargeMeshTrace.native\_mesh\_incidence\_trace, \dots) \\
\lean{carrier\_hierarchy\_unique} & standard; 17 hooks (DischargeCarrierStrata.carrier\_coordinate\_states\_table, DischargeMeshTrace.native\_mesh\_incidence\_trace, \dots) \\
\lean{substitution\_modular\_coincidence} & propext; hooks: substitution\_modular\_coincidence \\
\end{longtable}}

%% file: generated/fig_coincidence.tex
% generated by paper/scripts/coincidence_tree.py from verify/substitution_modular_coincidence.py
\begin{tikzpicture}[x=1cm,y=1cm,font=\small,
  lvl/.style={draw,rounded corners=1pt,inner sep=3pt,align=center,minimum width=2.6cm},
  arr/.style={-{Latex[length=1.6mm]},thick}]
  \node[lvl] (d0) at (0,0) {root: all $168$ labels};
  \node[lvl] (d1) at (0,-1.5) {residue $(0,0,0)$ at depth 1\\ \scriptsize all-address histogram: 8\,addresses with 42};
  \draw[arr] (d0) -- (d1);
  \node[lvl] (d2) at (0,-3.0) {residue $(0,0,1)$ at depth 2\\ \scriptsize all-address histogram: 48\,addresses with 6, 16\,addresses with 24};
  \draw[arr] (d1) -- (d2);
  \node[lvl] (d3) at (0,-4.5) {residue $(0,0,0)$ at depth 3\\ \scriptsize all-address histogram: 336\,addresses with 1, 160\,addresses with 6, 16\,addresses with 24};
  \draw[arr] (d2) -- (d3);
  \node[lvl,thick,fill=black!6] (fin) at (0,-6.0) {address $a=(0,0,2)$ mod $2^{3}$: one label, $i=78$};
  \draw[arr] (d3) -- (fin);
\end{tikzpicture}

%% file: sec9_context.tex
\section{Finite admissibility and contextual distinctions}\label{sec:context}

% ledger: C1--C6 (2026-09-13 supplement); these are written deductions, not new Lean claims.
The results above give a concrete instance of a distinction in the Six Birds (SBT)
calculus \cite{Tsi26F4}: a rule can survive composition while its realizations require
successively finer contextual distinctions. This complementary reading proceeds from
a local obstruction to the intrinsic hierarchy, then asks what the construction
suggests about descriptions of collective systems. The deductions below are written
proofs; no new Lean verification is claimed.

\emph{A distinction that cannot be erased.}
Let $C(x,y,z)=(y,z,x)$. Since $C\carrier=\carrier$, the nine pairs
$(C^a\solid,C^b\solid+(1,1,1))$, for $a,b\in\{0,1,2\}$, have the same ordered
pair of bare carriers. Their feature profiles match precisely when $a=b$.
The exact checker \file{paper/scripts/contextual_frames.py} reconstructs the $24$
opposed feature centres in each pair from the solid's rational feature records:
every off-diagonal pair mismatches all $24$ coefficients and has an interior overlap.
For example, $\solid$ and $C\solid+(1,1,1)$ both contain
$(5/4,11/8,1)$ in their interiors: there the root's role $188$ protrudes in $+z$
by $12/10000$, and the neighbour's role $4$ protrudes in $-z$ by $5/10000$.
The diagonal pairs are rotated copies of the $000$/central subdivision pair.
In SBT terminology this is a \emph{descent obstruction}: admissibility is not a
function of the erased carrier description. Testing the neighbour against each
of the three root frames requires retaining all three neighbour-frame distinctions.

\emph{Two different closure questions.}
SBT's \emph{sufficiency closure} (F7 of \cite{Tsi26F4}) asks whether each declared
readout factors through an observation. It does not assert that an operation
preserves legal configurations. Here the latter is the separate geometric statement
$D(X_Q)\subseteq X_Q$, where $X_Q$ is the space of tilings by $\solid$
(\cref{thm:coarsening}). This permits repeated recognition and restandardization
under the same rule. The observations of position in that hierarchy behave differently.

\begin{proposition}[phase distinctions]\label{prop:phases}
Fix a registered tiling $T$ in integer coordinates. Its level-$n$ parent anchors
lie in a unique coset $\alpha_n(T)+2^n\Z^3$, with
$\alpha_{n+1}(T)\equiv\alpha_n(T)\pmod{2^n}$. The observations
\[
 q_n^T(v)=v-\alpha_n(T)\pmod{2^n},\qquad v\in\Z^3,
\]
have $8^n$ values. Writing $\ker q=\{(v,w):q(v)=q(w)\}$,
\[
 \ker q_n^T=\{(v,w):w-v\in2^n\Z^3\},\qquad
 \ker q_{n+1}^T\subsetneq\ker q_n^T,\qquad
 \bigcap_{n\ge0}\ker q_n^T=\Delta,
\]
where $\Delta=\{(v,v):v\in\Z^3\}$.
\end{proposition}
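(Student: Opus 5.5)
The plan is to first establish the coset claim, then treat the kernels as pure lattice arithmetic. I would define the level-$n$ parent anchors as the integer origins (translation parts) of the level-$n$ supertiles. By \cref{thm:hierarchy}, these are the chairs of $D^n(T)$ rescaled by $2^n$.

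\emph{Level one.} The proof of \cref{thm:coarsening} already shows that every contact between two parents is one of the $44$ legal parent contacts, that all of these have even translation, and that the parent contact graph is connected. Hence all level-$1$ anchors lie in one coset $\alpha_1+2\Z^3$. I take $\alpha_0=0$, since every $v$ is congruent to $0$ modulo $2^0$.

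\emph{Induction.} Apply the same statement to the registered tiling $D^n(T)$. Its parent origins lie in one coset of $2\Z^3$ in its own coordinates. Rescaling by $2^n$ back to the coordinates of $T$ puts the level-$(n+1)$ anchors in a single coset $\alpha_{n+1}+2^{n+1}\Z^3$.

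\emph{Compatibility.} Each level-$(n+1)$ supertile is the union of its eight level-$n$ children. The child poses have translations $u\in\{0,4\}^3\cup\{(1,1,1)\}$ in the coordinates of $2\carrier$, so scaled to level $n$ they are $2^{n-1}u\in 2^n\Z^3$ after the factor $2^{n+1}/2$. Therefore a parent anchor and its child anchors agree modulo $2^n$, which gives $\alpha_{n+1}\equiv\alpha_n \pmod{2^n}$.

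\emph{Main obstacle.} The delicate point is the coordinate bookkeeping when descending to $D^n(T)$. As noted after \cref{thm:coarsening}, $D(T)$ may sit on a half-integer coset. I would avoid this by taking the anchors of $D^n(T)$ relative to its own registration and scaling back by $2^n$, which gives the level-$n$ anchors of $T$ in $T$'s integer coordinates. Uniqueness of the coset is then immediate, since the anchor set is nonempty.

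\emph{Kernels.} The map $q_n^T$ is a translate of the quotient $\Z^3\to\Z^3/2^n\Z^3$, so it takes $8^n$ values. It satisfies $q(v)=q(w)$ exactly when $w-v\in 2^n\Z^3$, because the shift by $\alpha_n$ cancels; this is the stated kernel. The kernels are nested because $2^{n+1}\Z^3\subseteq 2^n\Z^3$. The inclusion is strict, as witnessed by the pair $(0,2^n e_1)$, which lies in $\ker q_n^T$ but not in $\ker q_{n+1}^T$. Finally $\bigcap_n 2^n\Z^3=\{0\}$, so the intersection of the kernels is $\Delta$.
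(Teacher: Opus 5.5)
Your proof follows the paper's route. You apply the common-parent parity argument of \cref{thm:coarsening} to $D^n(T)$ and rescale by $2^n$, make the cosets coherent through the children, and then do elementary lattice arithmetic for the kernels; your witness $(0,2^ne_1)$ is the paper's.

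One step is wrong as written and needs correcting. Refinement sends $(G,t)$ to $(GH,2t+Gu)$, so the translations $u$ of \cref{tab:children} are measured in units of the children. Hence the level-$n$ children of a level-$(n+1)$ supertile $G(2^{n+1}\carrier)+a$ sit at $a+2^nGu$, not $a+2^{n-1}Gu$. With your factor $2^{n-1}$ the central child $u=(1,1,1)$ gives $2^{n-1}(1,1,1)\notin 2^n\Z^3$, so your sentence is false. With $2^n$ it holds, since $G$ preserves $\Z^3$.

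The paper avoids this computation. Child $000$ has $H=I$ and $u=0$, so every level-$(n+1)$ anchor is itself a level-$n$ anchor. This nesting of the anchor sets gives $\alpha_{n+1}\equiv\alpha_n\pmod{2^n}$ at once. It also shows every anchor is a level-$0$ anchor, hence an integer vector, which is what makes $\alpha_n+2^n\Z^3$ a coset in $\Z^3$.

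Nesting also disposes of your ``main obstacle''. The parity argument concerns only differences of anchors, which no translation changes, so you never need to re-register $D^n(T)$. Indeed, rescaling re-registered anchors would shift them by $2^n$ times the registering translation, so they would not literally be the level-$n$ anchors of $T$.
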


\begin{proof}
The common-parent parity argument of \cref{thm:coarsening}, applied at every
level and expressed in the original coordinates, gives the cosets. Each parent
has child $000$ at its own anchor, so the anchor sets are nested and the cosets
are coherent. Reduction modulo $2^n$ is onto and has the displayed kernel.
The pair $v,v+2^ne_1$ lies in that kernel but not the next; their intersection
is $\Delta$ because $\bigcap_n2^n\Z^3=\{0\}$.
\end{proof}

The phases are read from the intrinsic hierarchy, locally at each fixed level
with a neighbourhood growing with the level. In particular, for $p\in\Z^3$,
$\alpha_n(T+p)=\alpha_n(T)+p\pmod{2^n}$. A period must therefore vanish modulo
every $2^n$, recovering \cref{thm:noperiod}. This uses the global partitions,
not exhaustion by one tile's ancestors. Equal phases need not give equal patches:
$8^n$ counts phase states, not patch types or entropy. Each next level requires
eight times as many phase states, while each fixed phase observation has the
lawful translation update $q_n^T(v+u)=q_n^T(v)+u$.

\emph{Exact contextual sufficiency.}
Let $\ell:\Z^3\to A$ be the $168$-label cell encoding of $T$
(\cref{sec:limitperiodic}). It is faithful: a cell's position, native frame
and native cell identify its owner's whole pose. Declare the readouts to be
all translated labels, $v\mapsto\ell(v+u)$ for $u\in\Z^3$. Their common
equivalence, the \emph{predictive quotient} for this spatial probe, satisfies
\[
 v\equiv w\ \Longleftrightarrow\
 \forall u\in\Z^3,\ \ell(v+u)=\ell(w+u)
 \ \Longleftrightarrow\ w-v\in\Per(\ell).
\]
Indeed, put $u=z-v$; the reverse implication is the definition of a period.
Faithfulness gives $\Per(\ell)=\Per(T)=\{0\}$. Thus any observation sufficient
for all these readouts must distinguish every probe origin. This consequence
holds for any faithful aperiodic lattice labeling; Chair44 additionally supplies
the intrinsic phase hierarchy organizing such distinctions.

This sufficiency requirement concerns the declared probes. It excludes neither
finite descriptions nor useful coarse models. The coherent phases give a $2$-adic
coordinate, but need not determine the whole tiling; the hull and diffraction
questions retain the scope stated in \cref{sec:limitperiodic}.

\emph{Collective descriptions and their limits.}
These results suggest a broader question: which relationships must a description
retain for a collective to support its declared operations? An inventory of parts
can identify arrangements that behave differently when placed in a further context.
The appropriate criterion is therefore relative to a specified family of contexts
$\mathcal C$ and their observable outcomes:
\[
 q(x)=q(y)\quad\Longrightarrow\quad
 \operatorname{Outcome}(C[x])=\operatorname{Outcome}(C[y])
 \quad\text{for every }C\in\mathcal C.
\]
Here the outcome includes whether the composition is admissible at all. The
coarsest equivalence satisfying this criterion identifies exactly those states
that all declared contexts fail to distinguish. It specifies the relational
information required by those tests, rather than a complete microscopic inventory.

In SBT terms, a separating context witnesses a descent obstruction. Enlarging the
context family can only refine its observational equivalence, but strict refinement
at arbitrarily large depths requires a separate argument. Recursive composition
alone does not supply one. For Chair44, \cref{prop:phases} supplies that argument:
each phase level identifies positions separated by some nonzero translations,
while no such identification survives every level. Independently, coarsening
preserves the geometric admissibility law that makes the hierarchy repeatable.

The philosophical consequence is precise and limited: a finite, recursively stable
law can require a realization to preserve relational distinctions at arbitrarily
large scales. The infinite arrangement carries those distinctions; the law
constrains their organization. This suggests a question for other collective
systems: which permitted interaction reveals information erased by a proposed
description, and does retaining that information suffice after further composition?
An answer requires identifying that system's own states, operations and separating
contexts. Chair44 provides a geometric instance in which both lawful recursion and
unbounded contextual differentiation can be established.

%% file: sec9_remarks.tex
\section{Remarks and open problems}\label{sec:remarks}

% ledger: P1-P8, H12, N3, N4, N5, N6, Q5; source COMMUNITY_PAPER_PROFILE.md §10; 2303.10798 L253, L262; 2409.15880 L49, L236; 2305.17743 L115, L118
We close with questions, in the hat paper's manner, and we phrase each as a question.

\begin{enumerate}[label=\textup{(\arabic*)}, leftmargin=*]
  \item \emph{Symmetry.} The fixed-point construction (\cref{prop:seedsym}) yields tilings with
    symmetry groups of orders $1$ and $8$. Is the bound $|\Sym(T)|\le24$ attained, and which
    other finite symmetry groups occur? Question~32 of Coulbois, Gajardo, Guillon and Lutfalla
    asks for a geometric monotile whose tilings all have trivial stabilizer \cite{CGGL24};
    $\solid$ is not one (\cref{sec:aperiodicity}).
  \item \emph{Fault surfaces and the tiling space.} The ancestors of one fixed tile can fail to
    cover space (\cref{sec:hierarchy}). Which non-exhausting hierarchies occur, and does every
    tiling lie in the substitution hull? The hat paper asks the latter of hats \cite{SMKGS24}; for
    $\solid$ the question is also where a hull-level version of \cref{cor:limitperiodic}
    (\cref{sec:limitperiodic}) would
    have to be settled.
  \item \emph{Convexity.} Is there a convex strongly aperiodic monotile in $\R^3$? With
    reflections forbidden, the biprism is convex and weakly but not strongly aperiodic; with
    reflections allowed it admits periodic tilings; $\solid$ is strongly aperiodic and not convex;
    and for translations alone convex tiles cannot be aperiodic \cite{CGGL24}.
  \item \emph{Achirality.} Is there a strongly aperiodic monotile of $\R^3$ with a mirror symmetry,
    or one whose tilings mix both handednesses? The Spectre paper poses the einstein problem for
    each isometry group $G$ \cite{SMKGS24b}; $\solid$ answers it for the full isometry group of
    $\R^3$ and, being strictly chiral, for the group of proper motions at the same time.
  \item \emph{Simplicity.} What is the smallest number of faces, or of vertices, of a strongly
    aperiodic monotile in $\R^3$? $\solid$ has $4{,}272$ triangles because each of its $192$ features
    is a pyramid; within the twelve-parameter family of \cref{app:family} the proof does not
    change, and we do not know how far the architecture can be simplified.
  \item \emph{Heesch and isohedral numbers; decidability.} What Heesch numbers and isohedral
    numbers occur for polyhedra in $\R^3$, and is it decidable whether a given polyhedron tiles
    $\R^3$? We prove nothing in this direction.
  \item \emph{A shorter proof.} Simplified proofs of the hat's aperiodicity appeared within
    months \cite{AA25}. The universality half of our proof, \cref{sec:companions,sec:registration},
    is the part we would most like to see shortened; the finite half is a census that a different
    argument might avoid.
  \item \emph{The mechanism elsewhere.} The construction puts small features on a periodic
    carrier so that the features force a registered lattice substitution whose parent language is
    its own fine language. Does the same design yield einsteins in other dimensions, on other
    lattices, or in other crystallographic settings, and is there a version in the plane with a
    different carrier?
\end{enumerate}

% ledger: O1, N4; source 2303.10798 L262 (deflationary close)
The features of $\solid$ have height unit $1/10000$ and maximum height $12/10000$, measured in
carrier-cell units. Everything the theorem says about tilings by $\solid$, that they exist, that
they are hierarchical, that they have no period and no symmetry of infinite order, and that they
never mix a tile with its mirror image, is decided by those pyramids. We do not know whether
fewer or larger features, or a fundamentally simpler solid, can enforce the same conclusions;
$\solid$ may be far from the simplest example.

%% file: appA_data.tex
\section{Data and replay}\label{app:data}

% ledger: M3, M4, M6, M7, Q22; source APPENDIX_DATA.md §1-§4, §6; verify/replay.py; viewer/README.md
\emph{Inputs.} The theorem is about one solid, \file{solid/r44_solid.json}: its $2{,}138$
vertices as exact rationals, its $4{,}272$ triangles, the $192$ features with their signed
coefficients, and the eight child poses. Two certificate files carry the finite data the proof
uses: \file{certificates/candidate_certificate.json} (the contact closure and profile, the
$44$-contact atlas, the $33$ first shells, the completions and conflicts, the parent census) and
\file{certificates/companion_collision_certificate.json} (the $5{,}273$ rejection records with
their $299{,}975$ collision witnesses, the boxes themselves in a companion file). Their sha256
digests are in \cref{tab:hashes}. The replay entrypoint checks that the four canonical inputs are
byte-identical to the packet copies and checks the fixed solid digest before invoking the
checkers.

\begin{table}[ht]
  \centering
  \caption{Input hashes and tool versions, computed at the state of the repository described in
  this paper (FIGURES.md, Table 6).}
  \label{tab:hashes}
  \small
  \input{generated/appendix_data_table}
\end{table}

% ledger: M3, M4, M5, M6; source APPENDIX_DATA.md §2-§4; paper/review/REPLAY_FROM_PAPER.md
\emph{Replay in one minute.} From the repository root,
\begin{verbatim}
python3 verify/replay.py
\end{verbatim}
runs, with the Python standard library only and no network, the checker of the registered
theorem (the first implementation), the alignment checker and the integer-only replay of the
collision boxes (the second implementation), the mesh check \file{verify/boundary_sphere.py},
the tube-formula controls, \file{verify/substitution_modular_coincidence.py} and the
block-language closure \file{verify/seed_language_closure.py}; it asserts the
input digests first and writes \file{verify/replay_report.json}. In the recorded clean-clone run
at the pinned commit (\file{paper/review/REPLAY_FROM_PAPER.md}, 2026-09-10) the replay
completed in about half a minute ($28.316$
seconds wall time) on the machine that ran the Lean builds; its hardware is not otherwise
specified; hardware specifications and Lean build time and peak memory are not reported for this
version. Three
further commands complete the README's one-minute list:
\begin{verbatim}
(cd lean/R44 && python3 gen/extract.py --check)
python3 -m unittest discover simulations/tests
cd verify/packets/r44_unrestricted_alignment
python3 src/test_mutations.py
\end{verbatim}
The first checks that the Lean literals generated from the JSON inputs match those inputs. The
unit-test command reported $10$ tests with six skipped by optional-dependency guards; it does not
establish that those six tests passed. The last command rejects six corrupted inputs (a missing triangle, a reversed triangle, a moved apex, a
missing rejection witness, a wrong companion quantifier, a missing registered contact) and states
its own scope: it checks that malformed inputs are rejected and does not validate the continuous
proof.

\emph{Paper-lane scripts.} The standard-library scripts cited in the text as \tier{T2} live in
\file{paper/scripts/}: the exact closure of the block language and the fixed seeds
(\file{seed_language_closure.py}, \cref{sec:limitperiodic}), the planar areas of \cref{app:asymmetry}
(\file{planar_areas.py}) and the non-convexity witness; each reads the repository data and
prints \texttt{STATUS: PASS}.

\emph{Figures.} \Cref{fig:tile,fig:panel} are drawn by \file{figures/print-preview/build\_preview.py}
from \file{solid/r44\_solid.json} with the exaggerations stated in their captions: the carrier and
the $192$ feature centres keep their coordinates, every feature base is widened $\times 5$ about
its centre and every signed height is multiplied by $80$, so polarities and relative heights
are preserved and nothing is added, omitted or moved. The script checks all $192$ feature records
against the mesh and the native panel table in rational arithmetic, and a second script reads the
display mesh back and reconstructs its $768$ feature triangles independently; the outputs and their
digests are in \file{figures/print-preview/}. \Cref{fig:cluster,fig:patch} are the renders of
\file{figures/render\_r44.py}. No figure is verification.

\emph{Exact arithmetic.} Every certificate is integer or rational data; the only decimal literals
in the certificates are two timing fields. The checkers use Python integers and
\texttt{fractions.Fraction}; the only floating-point conversion in the repository's verification
code is the export of the mesh to a rendering format. The verification scripts named in the ``Paper-lane scripts'' paragraph
also use exact rationals only; the planar-area checker bounds each non-axis facet's
area $\sqrt q/2$ by $(q+m^2)/(4m)$ with an integer-square-root witness $m$, exactly. Collision
boxes are stored in integer units of $1/400$. The Lean theorems decide integer and rational
literals.

\emph{Lean.} The development is \file{lean/R44}, built with \texttt{lake build} under the
toolchain of \cref{tab:hashes} (the build is capped at four parallel jobs and the Mathlib cache is
pre-seeded); \file{lean/R44/scripts/controls.sh} runs the positive build, then every must-fail
file against its expected diagnostic, the scope regressions and the admission count, in one
command. The axiom lines of \cref{tab:axioms} are produced by the build itself
(\file{build_axioms.log}).

% ledger: Q22, M7, O12; source viewer/README.md; figures/print-preview/README.md
\emph{Viewer.} \file{viewer/index.html} shows the solid, the eight-child cluster and the
$64$-, $512$- and $4{,}096$-copy patches in the browser without a server. It
is generated from the canonical rational data with the source hashes recorded; its features are
drawn exaggerated by default, with a switch to true heights; colours are annotation. It also
offers a period test on finite patches, which, as its own caption says, proves nothing.

\emph{Code.} Repository: \url{https://github.com/ioannist/six-birds-tiles}. The Lean statements,
hypothesis record and axiom lists reproduced in this paper refer to commit
\texttt{d90313a717}, and the recorded clean-clone replay was run at that commit (an earlier
recorded run at \texttt{bdc6d3b957}, before the block-language closure joined the replay, gave
the same digests and outcomes). The intended paper-release tag is \texttt{paper-v1};
tag creation and licensing remain pending owner actions.

%% file: generated/appendix_data_table.tex
% generated by paper/scripts/appendix_data_table.py; sha256 computed live; Lean versions at d90313a717
\begin{tabular}{L{0.42\linewidth} L{0.5\linewidth}}
\toprule
artifact & sha256 / version \\
\midrule
\texttt{solid/\allowbreak{}\allowbreak{}r44\_\allowbreak{}solid.json} & \texttt{f320d7a0c2d784a3\dots{}42f0ed55} \\
\texttt{solid/\allowbreak{}\allowbreak{}native\_\allowbreak{}panels.csv} & \texttt{b9eec9fd834abba4\dots{}d3a9ed2d} \\
\texttt{certificates/\allowbreak{}\allowbreak{}candidate\_\allowbreak{}certificate.json} & \texttt{44e9b3f6048497de\dots{}c5de5ff2} \\
\texttt{certificates/\allowbreak{}\allowbreak{}companion\_\allowbreak{}collision\_\allowbreak{}certificate.json} & \texttt{36e89e2fe81f4788\dots{}1b068b08} \\
\texttt{certificates/\allowbreak{}\allowbreak{}collision\_\allowbreak{}core\_\allowbreak{}witnesses.jsonl.gz} & \texttt{ae64a5283400c61e\dots{}161f8a45} \\
Lean toolchain & \texttt{leanprover/\allowbreak{}lean4:v4.31.0} \\
Mathlib commit & \texttt{fabf563a7c95} \\
batteries / aesop / Qq / proofwidgets & \texttt{fa08db58b30e, e3cb2f741431, f46324995fca, 24b0d9dc081c} \\
\texttt{verify/\allowbreak{}\allowbreak{}replay.py} (Python standard library) & PASS, 27.0 s \\
Python (replay and tables) & 3.12.3 \\
\bottomrule
\end{tabular}

%% file: appB_census.tex
\section{Census tables}\label{app:census}

% ledger: R1-R9, A11, L1, M3; generated census_table; Lean names per row
\Cref{tab:census} collects the finite censuses used in the proof, read by a script from the
certificates and the packet results that the replay checks. For each row the Lean theorem that
certifies it is: the contact closure and profile, \lean{contact\_closure\_30} and
\lean{profile\_canonical}; the atlas and the group it generates, \lean{atlas\_44} and
\lean{orientation\_group\_24}; the first shells, completions and conflicts,
\lean{first\_shells\_33}, \lean{central\_completion} and \lean{parent\_conflicts\_28}; the parent
census, \lean{parent\_atlas\_eq\_fine}; the companion census and its collisions,
\lean{mates\_census}; the substitution, \lean{substitution\_modular\_coincidence}. The companion
census is computed by the two independent implementations of \cref{sec:computer}; the second
implementation records, for its own audit, that it imports no upstream program, derives the
features from the actual triangles, uses no flat-panel filter to reject placements, and uses no
existing rejection to prune the companion options.

\begin{table}[ht]
  \centering
  \caption{The censuses (FIGURES.md, Table 2).}
  \label{tab:census}
  \small
  \input{generated/census_table}
\end{table}

% ledger: R2, A11; source PROOFS_registered.md R§6; alignment_verification.json (complete_feature_mates)
The $2{,}388$ candidate contact poses arise as follows. A root chair in the identity pose has $22$
grid cells adjacent to it across its $24$ panels (the $21$ outer panels face distinct exterior
cells, while all three notch panels face the missing cell $(1,1,1)$, giving $22$ distinct shell
cells); a neighbouring chair in one of
the $48$ signed frames covering a shell cell is positioned by subtracting each of its seven cells
from that shell cell; deduplicating and rejecting overlaps with the root leaves $2{,}388$. The
$6{,}862$ companion poses are the formula poses of \cref{cor:discrete} over all ordered pairs of
opposite features and all signed permutations with matching normals; of them $1{,}545$ collide with
the root directly, $5{,}317$ are isolated ($5{,}234$ with a fractional translation, $83$ integral),
$5{,}273$ are rejected by the collision witnesses, and the $44$ survivors equal the atlas as a
set.

%% file: generated/census_table.tex
% generated by paper/scripts/census_table.py; from certificates/*.json, the alignment packet results and substitution_modular_coincidence.py
\begin{tabular}{L{0.6\linewidth} l}
\toprule
census & count \\
\midrule
contact closure: internal face contacts $\to$ closed states; sign equations; balanced components & $21\to30$; $372$; $12$ \\
shell poses in $48$ frames $\to$ legal contacts (the atlas); distinct rotations; group generated & $2{,}388\to44$; $19$; $24$ \\
first shells; outer-root completions found / impossible; parent conflicts & $33$; $14/18$; $28$ \\
parent contacts: candidates $\to$ disjoint $\to$ legal; halved $=$ fine atlas & $697\to116\to44$; yes \\
complete-feature mate poses $\to$ isolated $\to$ survivors & $6{,}862\to5{,}317\to44$ \\
rejected isolated poses; companion-option collisions checked & $5{,}273$; $299{,}975$ \\
substitution labels; column sum; least primitive exponent $N$; coincidence depth $M$, address $a$, label $i$ & $168$; $8$; $3$; $3$, $(0,0,2)$, $78$ \\
\bottomrule
\end{tabular}

%% file: appC_lean.tex
\section{Lean statement listing and axioms}\label{app:lean}

% ledger: T1-T3, T12, M1, M2, M2b (closed), M5; source lean/R44/R44/LogicalSpine.lean and AXIOMS.md at the pinned commit
\emph{The main theorem.} In \file{lean/R44/R44/LogicalSpine.lean}, with $Q$ the solid defined
from the canonical panel data, \lean{Tiling Q} the tilings of $\R^3$ by affine-isometric copies of
$Q$ (reflections allowed, local finiteness not assumed), and \lean{Per}, \lean{Sym} as in
\cref{sec:terminology}, the main theorem reads
\begin{footnotesize}
\begin{verbatim}
theorem r44_einstein : Nonempty (Tiling Q) ∧ ∀ T : Tiling Q,
    Per T = {0} ∧ (Set.univ : Set (Sym T)).encard ≤ 24
\end{verbatim}
\end{footnotesize}
and takes no hypothesis argument. The companion declarations of \cref{sec:lean} (homochirality,
the chirality corollary, the hierarchy and its uniqueness) retain an argument \lean{H} of type
\lean{Hypotheses}, but that record is empty and is instantiated by the empty anonymous
constructor; it imposes no mathematical assumption. The finite theorems of \cref{tab:finite} take no hypothesis
argument.

% ledger: T12, M5; source lean/R44/HYPOTHESES.md, LogicalSpine.lean (structure Hypotheses) at the pin
\emph{The record.} The development was written around a record \lean{Hypotheses} whose fields
quoted the written lemmas of this paper, one field per lemma, with \lean{r44\_einstein} proved
over that record. Each field has since been proved as a theorem of the development
(\cref{tab:hypotheses}; the theorems live in \file{lean/R44/R44/Proved/}), and at the pinned
commit the record has no fields: its declaration is, verbatim,
\begin{footnotesize}
\input{generated/hypotheses_record}
\end{footnotesize}
The conditional theorem survives, over the empty record, as
\lean{r44\_einstein\_of\_hypotheses}; the main theorem is its instance at the empty record.
\file{lean/R44/HYPOTHESES.md} gives, for each former field, the written lemma it quoted, the
theorem that discharged it and the delivery it came in. The diagonal audit, made after the first
external review, distinguishes assertions about distinct companion tiles from relations with
legitimate reflexive cases.

% ledger: M2, M2b (closed), M5; source lean/R44/build_axioms.log and AXIOMS.md at the pin
\emph{Axioms.} \Cref{tab:axioms} is the output of \lean{\#print axioms}; the main theorem's line
is, verbatim,
\begin{scriptsize}
\begin{verbatim}
'R44.r44_einstein' depends on axioms: [propext, Classical.choice, Quot.sound,
  R44.atlas_44._native.native_decide.ax_1_1,
  R44.central_completion._native.native_decide.ax_1_1,
  R44.contact_closure_30._native.native_decide.ax_1_1,
  R44.first_shells_33._native.native_decide.ax_1_1,
  R44.hierarchy_cell_controls._native.native_decide.ax_1_1,
  R44.mate_records_roles_nonempty._native.native_decide.ax_1_1,
  R44.mates_census._native.native_decide.ax_1_1,
  R44.mesh_angle_audit._native.native_decide.ax_1_1,
  R44.nested_substitution_controls._native.native_decide.ax_1_1,
  R44.orientation_group_24._native.native_decide.ax_1_1,
  R44.parent_atlas_eq_fine._native.native_decide.ax_1_1,
  R44.parent_conflicts_28._native.native_decide.ax_1_1,
  R44.profile_canonical._native.native_decide.ax_1_1,
  R44.registered_shell_cell_controls._native.native_decide.ax_1_1,
  R44.solid_mesh_exact._native.native_decide.ax_1_1,
  R44.transported_role_geometry._native.native_decide.ax_1_1,
  R44.DischargeCarrierStrata.carrier_coordinate_states_table
      ._native.native_decide.ax_1_1,
  R44.DischargeMeshTrace.native_mesh_incidence_trace
      ._native.native_decide.ax_1_1,
  R44.DischargeVertexData.carrier_vertex_lookup._native.native_decide.ax_1_1,
  R44.DischargeVertexData.feature_index_bounds._native.native_decide.ax_1_1,
  R44.DischargeVertexData.feature_vertex_lookup._native.native_decide.ax_1_1]
\end{verbatim}
\end{scriptsize}
(one line in the log, broken here, the two longest names at their namespace dot). Each
\lean{native\_decide} hook trusts the compiled evaluation
of one closed decidable proposition. Of the $21$ hooks, $15$ belong to the finite theorems of
\cref{tab:finite} and to the auxiliary \lean{transported\_role\_geometry} check; the other six
are closed checks over the certified finite data made by the discharge proofs: a side condition
on the census metadata (\lean{mate\_records\_roles\_nonempty}), the $6{,}408$-edge mesh
incidence trace (\lean{native\_mesh\_incidence\_trace}), a $125$-state classifier of carrier
coordinates (\lean{carrier\_coordinate\_states\_table}), and three lookups in the tables of
exceptional feature and carrier vertices. None of them changes the proposition it certifies; the
repository's rulings R8 and R10 in \file{lean/R44/MECHANIZATION\_PLAN.md} record why each is a
named hook rather than a kernel \texttt{rfl}. The independent Python replays check the certificates
and geometric data described in \cref{app:data}. No \lean{sorryAx} occurs anywhere in the
development, and the directory \file{lean/R44/R44/Discharge/}, which held the separately built
discharge library with its marked admissions at the earlier baseline, holds no module at the
pinned commit.

\emph{Controls.} \file{lean/R44/scripts/controls.sh} runs the positive build and then compiles
every must-fail file, checking that each fails with its expected diagnostic. The negative
controls exercise the repaired diagonal application, corrupted finite inputs, mixed-handedness
claims, a second-hierarchy claim and a claim that a raw geometric hierarchy has a noncanonical
level-one pose. The
runner also checks the scope regressions (which include legitimate reflexive uses) and reports
the admission count, zero at the pinned commit. A transcript of the controls run, with build time
and peak memory, is not included at the state described here.

%% file: generated/hypotheses_record.tex
% generated by paper/scripts/hypotheses_record.py from lean/R44/R44/LogicalSpine.lean at d90313a717; docstrings omitted
\begin{verbatim}
structure Hypotheses : Prop where
\end{verbatim}

%% file: appD_calibration.tex
\section{The planar calibration}\label{app:calibration}

% ledger: D8; source calibration_2d/LANDING.md, rounds/r004_varying_geometry_tower/formal/Tower.lean; not evidence for the theorem
The same pipeline was run on the hat as a calibration of the tooling, not as a result: the hat
polykite with its thirteen vertices was reconstructed as an exact polygon, its published
substitution and recognition grammar were replayed with explicit proof dependencies, the growth
of patches was derived, and controls were run. The calibration checked an abstract theorem that
transports a period unchanged through coarsening levels and excludes nonzero periods using
unbounded lower bounds on their size; its geometric and recognition inputs remain explicit
imports from the hat paper \cite{SMKGS24}, to whose authors the hat and its aperiodicity are due.
The R44 proof uses the related obstruction in a different normalization: periods halve while
nonzero registered periods have a uniform positive lower bound. The calibration is recorded in
the repository under \texttt{calibration\_2d/} with its own charter, receipts and lessons, and it
served as the template for the Lean skeleton of \cref{sec:mechanization} and as an exercise of the
replay, control and review discipline of \cref{app:data} on a known einstein.

%% file: appE_geometry.tex
\section{Geometric details of the construction}\label{app:solidchecks}

% ledger: O1, O2, O3, O4, O5, O6, O7, R1, A16, T14; exposition of the existing construction checks relocated from Section 2
The recipe in \cref{sec:solid} specifies the solid. Here we record how its signed
height table is obtained and why the resulting boundary, asymmetry and feature
angles have the properties used by the proof. The numerical choices in the recipe
are one explicit member of the family described at the end of this appendix.

\subsection{How the height table is determined}\label{app:profile}

% ledger: R1; Lean contact_closure_30, profile_canonical
The table of heights is not arbitrary; it is determined by the substitution of
\cref{sec:substitution} and can be reconstructed from it. Start with the $21$ face contacts that
occur inside the eight-child dissection of the doubled chair and propagate them through
refinement until the set of contacts is closed; it closes at $30$. At every pair of marks that
these contacts bring into coincidence impose that the two heights be opposite, $a_u=-a_v$. This
gives $372$ distinct equations whose signed graph on the $192$ marks has exactly twelve balanced
connected components, each with eight positive and eight negative vertices. Assign the magnitude
$j$ to the $j$-th component in order of its smallest mark, take that mark positive, and transport
signs along the graph. The result is the table of $192$ entries in the solid file, and the twelve
magnitudes $1,\dots,12$ are the twelve components. Because every component uses a distinct
magnitude, two features fit face to face only if they have the same magnitude and opposite sign.

\subsection{Boundary and topology}\label{app:boundary}

% ledger: O2, O3, O4; Lean boundary_sphere; verify/boundary_sphere.py; imports Moise, Rourke-Sanderson
The boundary of $\solid$ is a triangulated surface with $2{,}138$ vertices, $6{,}408$ edges and
$4{,}272$ triangles (\cref{tab:mesh}), checked triangle by triangle against the construction in \cref{sec:solid}. Every edge lies in exactly two triangles, the triangle graph is connected, the link of
every vertex is a single cycle, and $V-E+F=2$; these are finite facts, verified in Lean and
independently by a script. Two classical results then apply, and we state them exactly as used:
a connected closed triangulated surface with Euler characteristic $2$ is homeomorphic to the
sphere, since an orientable surface of genus $g$ has $\chi=2-2g$ and a non-orientable one with $k$
crosscaps has $\chi=2-k$; and, by the polyhedral Schoenflies theorem of Alexander, a polyhedral
$2$-sphere in $\R^3$ bounds a piecewise-linear $3$-ball \cite{Moi77,RS72}. Hence $\solid$, the
closure of the bounded domain its boundary mesh bounds, is a closed topological $3$-ball. The same
conclusion follows from the tube construction in \cref{app:tubes}, which exhibits an ambient homeomorphism
carrying $\carrier$ to $\solid$.

\begin{table}[ht]
  \centering
  \caption{Mesh facts of $\solid$, computed from the solid file with exact arithmetic and
  cross-checked against its recorded counts; the verified statements are \lean{boundary\_sphere},
  \lean{mesh\_angle\_audit} and \texttt{verify/boundary\_sphere.py} (FIGURES.md, Table 5).}
  \label{tab:mesh}
  \input{generated/mesh_table}
\end{table}

\subsection{The solid has no nonidentity self-isometry}\label{app:asymmetry}

% ledger: O5, O11, R12; Lean planar_area_carrier_recovery_holds (T1, E5; formal proof by diameter endpoints); carrier_feature_frame_reduction_holds (T1n); no_native_symmetry (T1); paper/scripts/planar_areas.py (T2); external review of the manuscript 2026-09-09, finding m2
$\solid$ has no nonidentity self-isometry. The argument has three parts, and we say once here at what level
each is verified. First, an isometry $g$ with $g\solid=\solid$ preserves the carrier: the nine
large planar boundary regions of $\solid$, one for each coordinate plane at heights $0$, $1$ and
$2$, carry planar areas $2492/625$, $623/625$ and $1869/625$ respectively (the area-four, notch
area-one and area-three regions less their feature bases; exact values from the mesh,
\file{paper/scripts/planar_areas.py}), each more than $9/10$, whereas all the non-axis pyramid
facets together have area less than $1212/15625<1/4$. So $g$ permutes those nine planes; the
three distinct area types identify the height-$0$, height-$1$ and height-$2$ plane triples, hence
$g$ fixes the origin, the corner $(2,2,2)$ and the notch, and maps $\carrier$ to itself
(the Lean theorem \lean{planar\_area\_carrier\_recovery\_holds}, \tier{T1}, proves the same
statement by a different route: a point in a feature tube is at squared distance at most
$1089/100<12$ from every point of $\solid$, so the diameter pairs of $\solid$ lie in the
undeformed carrier and are its antipodal corner pairs, which $g$ permutes; the six corners
recover the centre and the axes, and the preserved missing corner excludes a reversal; the
planar-area calculation above is the written proof). Second, an isometry that preserves both $\solid$
and the chair has a signed permutation matrix as its linear part (the chair's own symmetry group
is the six coordinate permutations) and must carry the table of feature centres and heights to
itself; this reduction to one of the $48$ signed frames is a Lean theorem using
\lean{native\_decide}. Third, among the $48$ signed frames only the identity preserves the
table of $192$ features (a Lean theorem by kernel \lean{decide}). Consequently the stabilizer of
a tiling's family of placements equals its symmetry group, as noted in \cref{sec:terminology}.

\subsection{Feature angles}\label{app:angles}

\begin{figure}[ht]
  \centering
  \resizebox{\linewidth}{!}{\input{fig_feature_geometry}}
  \caption{Schematic feature geometry, with exaggerated height: a pyramid of height $j/10000$,
  base half-width $1/100$, its deviation angles, and the interior cone with slope bound $L=3/25$
  used in \cref{sec:companions}.}
  \label{fig:feature}
\end{figure}

% ledger: O6, O7; Lean native_features_length, native_feature_centers_nodup, deviations_distinct, mesh_angle_audit
The $192$ features play $192$ distinct roles: their centres are distinct points of
$\tfrac18\Z^3$, and the twelve magnitudes give two sequences of twelve deviation angles, at the
base edges and at the apex edges of a pyramid of slope $t_j=j/100$, all $24$ distinct: a
coincidence between the two sequences would force $1+(i/100)^2=(1+(j/100)^2)^2$, that is
$10000\,i^2=20000\,j^2+j^4$, which has no solution with $1\le i,j\le 12$ (a Lean theorem by
kernel \lean{decide}). Every one of the $6{,}408$ mesh edges carries a declared dihedral angle,
flat or orthogonal except at the features; the $1{,}536$ feature edges fall into $48$ classes of
$32$, one class per deviation and sign (\cref{tab:mesh}).

\subsection{Local deformation and retained cores}\label{app:tubes}

% ledger: A16; Lean per_tube_homeomorphisms_holds, feature_tube_maps_glue_holds, feature_tube_map_carries_carrier_holds (T1n); disjointness and boundary identity T1
Around each feature take the normal tube of half-height $\rho=1/100$ over its base. The map
$(u,z)\mapsto(u,\,z+\chi(z)f(u))$, where $f$ is the signed tent function of the pyramid and
$\chi(z)=\max(0,1-|z|/\rho)$, is strictly increasing in $z$ because $\|f\|_\infty\le 12/10000<\rho$,
equals the identity on the boundary of the tube, and carries the flat base to the pyramid. The
$192$ tubes are pairwise disjoint (the marks on a panel are at least $1/8$ apart in one tangent
coordinate and more than $1/5$ from every panel edge, while the bases are $1/50$ across), so the
maps glue to a homeomorphism of $\R^3$ that is the identity outside the tubes and carries
$\carrier$ onto $\solid$. In particular every unit cube of the chair keeps its core
$[\rho,1-\rho]^3$ inside $\solid$, a fact used in \cref{sec:registration}.

\subsection{The open parameter family}\label{app:family}

% ledger: T14; source proof/PROOFS_registered.md R§11; form 2303.10798 L80 (Tile(a,b))
Finally, the integer heights are a convenient member of a family. Let the twelve component
heights be real numbers $c_1,\dots,c_{12}$, keep the signs of the graph, and use height
$c_j/10000$. With $t_j=|c_j|/100$, the whole proof goes through unchanged provided every $c_j$ is
nonzero and the magnitudes $|c_j|$ are distinct; $1+t_i^2\ne(1+t_j^2)^2$ for all $i,j$; every
$t_j<1$, every $(1+t_j^2)^2<2$, and $63\max_j t_j^2<1$; and the heights stay below $\rho$ and
$\eta$. These are finitely many strict inequalities satisfied by $c_j=j$ (\cref{tab:family}), so
\cref{thm:full} holds on an open twelve-parameter family of solids with the same architecture.
This is not stability under arbitrary perturbation of the boundary, and we claim none.

\begin{table}[ht]
  \centering
  \caption{The parameter family: the exact open conditions of R\S11 and their values at the
  explicit member $c_j=j$, checked with exact rationals (FIGURES.md, Table 7).}
  \label{tab:family}
  \small
  \input{generated/parameter_family_table}
\end{table}

%% file: generated/mesh_table.tex
% generated by paper/scripts/mesh_table.py; from solid/r44_solid.json, exact arithmetic, cross-checked against its counts
\begin{tabular}{L{0.66\linewidth} r}
\toprule
quantity & value \\
\midrule
vertices $V$ & $2{,}138$ \\
edges $E$ & $6{,}408$ \\
triangles $F$ & $4{,}272$ \\
$V-E+F$ & $2$ \\
volume (exact, divergence formula) & $7$ \\
features (square pyramids) & $192$ \\
feature edges; classes (magnitude, base/ridge, sign) $\times$ edges per class & $1{,}536$; $48\times32$ \\
base half-width $\eta$; height unit & $1/100$; $1/10000$ \\
\bottomrule
\end{tabular}

%% file: fig_feature_geometry.tex
% fig_feature_geometry.tex — Fig. 2.3: a feature in cross-section (schematic; heights exaggerated,
% the exact numbers are printed). Used by sec2_solid.tex.
\begin{tikzpicture}[x=1cm,y=1cm,font=\small,>={Latex[length=1.6mm]}]
  % left: pyramid cross-section through the apex, along a tangent axis: base [-eta, eta], apex height h
  \begin{scope}
    \draw[thick] (-3.0,0) -- (3.0,0);
    \node[anchor=north] at (-2.4,-0.05) {panel (flat)};
    \draw[thick,fill=red!12] (-1.4,0) -- (0,1.0) -- (1.4,0) -- cycle;
    \draw[<->] (-1.4,-0.45) -- (1.4,-0.45) node[midway,below] {$2\eta = 1/50$};
    \draw[<->] (1.7,0) -- (1.7,1.0) node[midway,right] {$h = a/10000$};
    \draw[dashed] (0,0) -- (0,1.0);
    \draw (-1.4,0) ++(0.55,0) arc (0:35.5:0.55);
    \node at (-0.55,0.25) {$\alpha_j$};
    \node[anchor=north,align=center,text width=6.4cm] at (0,-1.0)
      {$|a|\in\{1,\dots,12\}$; base edge: dihedral $\pi\pm\alpha_j$, $\tan\alpha_j=t_j=j/100$;\\
       apex ridge: dihedral $\pi\mp\beta_j$, $\cos\beta_j=1/(1+t_j^2)$;\\ all $24$ deviations distinct and $<\pi/4$};
  \end{scope}
  % right: the Lipschitz cone at a point of the feature graph
  \begin{scope}[shift={(7.4,0)}]
    \draw[thick] (-2.0,0) -- (2.0,0);
    \fill[black!8] (0,0) -- (-1.8,-0.22) -- (-1.8,-1.4) -- (1.8,-1.4) -- (1.8,-0.22) -- cycle;
    \draw[thick] (0,0) -- (-1.8,-0.22);
    \draw[thick] (0,0) -- (1.8,-0.22);
    \fill (0,0) circle (1.2pt);
    \node[anchor=south] at (0,0.08) {a point of the feature graph};
    \node[anchor=north,align=center,text width=6.4cm] at (0,-1.5)
      {interior cone $w<-L\|v\|$, $L=3/25$:\\ solid angle $\Omega(L)=2\pi\bigl(1-L/\sqrt{1+L^2}\bigr)>7\pi/4$};
  \end{scope}
\end{tikzpicture}

%% file: generated/parameter_family_table.tex
% generated by paper/scripts/parameter_family_table.py; R\S11 conditions verified at c_j = j with exact rationals
\begin{tabular}{L{0.5\linewidth} L{0.42\linewidth}}
\toprule
condition (R\S11) & at $c_j=j$ \\
\midrule
(1) every $c_j\ne0$, magnitudes $|c_j|$ distinct & $|c_j|=1,\dots,12$, distinct \\
(2) $1+t_i^2\ne(1+t_j^2)^2$ for all $i,j$ ($t_j=|c_j|/100$) & holds; smallest gap $39/390625$ \\
(3) $t_j<1$; $(1+t_j^2)^2<2$; $63\max_j t_j^2<1$ & $\max t_j=3/25$; $(1+t^2)^2=401956/390625$; $63t^2=567/625$ \\
(4) heights $|c_j|/10000$ below $\rho=1/100$ and $\eta=1/100$ & $\max=3/2500$ \\
\bottomrule
\end{tabular}

%% file: notation.tex
\section{Notation}\label{app:notation}

% ledger: D1, D3, D5, O1, A11; the definitions of sec:solid, sec:substitution, sec:companions, sec:registration, sec:hierarchy
\begin{table}[ht]
  \centering
  \caption{Notation used throughout, with the section that defines it.}
  \label{tab:notation}
  \small
  \begin{tabular}{l L{0.62\linewidth}}
    \toprule
    symbol & meaning \\
    \midrule
    $\carrier$ & the seven-cube chair carrier, volume $7$ (\cref{sec:solid}) \\
    panel & one of the $24$ exposed unit squares of $\carrier$ (\cref{sec:solid}) \\
    feature & one of the $192$ square pyramids, eight per panel (\cref{sec:solid}) \\
    $\eta$, $\rho$ & base half-width $1/100$ of a feature; tube half-height and retained-core margin $1/100$ (\cref{sec:solid,app:tubes}) \\
    $\solid$ & the solid: the carrier with its $192$ features (\cref{sec:solid}) \\
    role, profile & a feature position indexed by panel and mark; the vector of $192$ signed feature coefficients (\cref{sec:solid}) \\
    $t_j$, $\alpha_j$, $\beta_j$ & slope $j/100$ of a feature of magnitude $j$; its base-edge and ridge deviation angles (\cref{lem:dihedral}) \\
    $E(F)$ & the closed edge graph of a feature $F$: four base edges and four ridges (\cref{sec:companions}) \\
    frame & a signed permutation of the coordinate axes; registered frames are the $24$ proper ones (\cref{sec:substitution}) \\
    registered pose & a proper cubic frame followed by an integer translation (\cref{sec:substitution}) \\
    $\atlas$ & the $44$ legal neighbour poses relative to a root chair in the identity pose (\cref{sec:substitution}) \\
    $F_0$, $F_1$, $\mathrm{Partners}(u)$ & the $6{,}862$ complete-feature companion poses; those not overlapping the root; those in $F_1$ matching the root feature $u$ (\cref{sec:registration}) \\
    $T$, $\Per(T)$, $\Sym(T)$ & a tiling; its translation periods; its symmetry group (\cref{sec:terminology}) \\
    handedness & the determinant $\pm1$ of the linear part of a placement (\cref{sec:aperiodicity}) \\
    supertile of level $n$ & a $2^n$-scaled chair in a registered pose (\cref{sec:hierarchy}) \\
    coarsening $D$ & grouping into unique parents, halving parent origins and scale, and realizing fresh copies of $\solid$ (\cref{thm:coarsening}) \\
    \tier{T1} & kernel-checked Lean proof using only standard axioms \\
    \tier{T1n} & kernel-checked Lean proof with the recorded native compiler hooks \\
    \tier{T2} & exact finite Python replay \\
    \tier{T3} & written proof or cited external import \\
    \bottomrule
  \end{tabular}
\end{table}